\newcommand{\st}[1]{\ensuremath{^{\scriptstyle \textrm{#1}}}}
\newcommand\bigcheck[1]{#1 \raise1ex\hbox{$\hspace{-1ex}{}^\vee$}}
\newcommand\sucheck[1]{#1 \raise0.5ex\hbox{$\hspace{-1ex}{}^\vee$}}
\newcommand{\ad}{\mathop{\rm ad}\,}
\newcommand{\ch}{{\rm ch}}
\newcommand{\rank}{{\rm rank}}
\newcommand{\End}{\mathop{\rm End }}
\renewcommand{\Im}{\mathop{\rm Im  \, }}
\renewcommand{\ne}{\mathop{\rm ne}\,}
\renewcommand{\sl}{s\ell}
\newcommand{\str}{{\rm str}}
\newcommand{\sdim}{\mathop{\rm sdim \, }}
\newcommand{\C}{\mathcal{C}}
\newcommand{\FF}{\mathbb{F}}
\newcommand{\ZZ}{\mathbb{Z}}
\newcommand{\fa}{\mathfrak{a}}
\newcommand{\fg}{\mathfrak{g}}
\newcommand{\fh}{\mathfrak{h}}
\renewcommand\section{\@startsection {section}{1}{\z@}%
                                   {-3.5ex \@plus -1ex \@minus -.2ex}%
                                   {2.3ex \@plus.2ex}%
                                   {\normalfont\large\bfseries}}
\renewcommand\subsection{\@startsection{subsection}{2}{\z@}%
                                     {-3.25ex\@plus -1ex \@minus -.2ex}%
                                     {0ex \@plus .0ex}%
                                     {\normalfont\normalsize\bfseries}}
\newtheorem{theorem}{Theorem}[section]
\newtheorem{lemma}{Lemma}[section]
\newtheorem{corollary}{Corollary}[section]
\newtheorem{proposition}{Proposition}[section]
\newtheorem*{lemma*}{Lemma}
\theoremstyle{remark}
\newtheorem{remark}{Remark}[section]
\def\be{\ifnum \count1=0 $$ \else \begin{equation}
\fi}
\def\ee{\ifnum\count1=0 $$ \else
\end{equation}\fi}
\def\ele(#1){\ifnum\count1=0 \eqno({\bf #1}) $$
\else \label{#1}\end{equation}\fi}
\def\req(#1){\ifnum\count1=0 {\bf #1}\else
\ref{#1}\fi}
\def\bea(#1){\ifnum \count1=0   $$
\begin{array}{#1}
\else \begin{equation} \begin{array}{#1} \fi}
\def\eea{\ifnum \count1=0 \end{array} $$
\else  \end{array}\end{equation}\fi}
\def\elea(#1){\ifnum \count1=0 \end{array}
\label{#1}\eqno({\bf #1}) $$
\else\end{array}\label{#1}\end{equation}\fi}
\def\cit(#1){
\ifnum\count1=0 {\bf #1} \cite{#1} \else
\cite{#1}\fi}
\def\bibit(#1){\ifnum\count1=0 \bibitem{#1} [#1    ] \else \bibitem{#1}\fi}
\def\@maketitle{\newpage
 \null
 \vskip 2em
 \begin{center}%
  \vskip 3em
  {\Large\bf \@title \par}%
  \vskip 1.5em
  {\normalsize
   \lineskip .5em
   \begin{tabular}[t]{c}\@author
   \end{tabular}\par}%
  \vskip 2em

 \end{center}%
 \par
 \vskip 2.5em}
\begin{document}

\title{On free field realization of quantum affine W-algebras}

 \author{Victor G. Kac\thanks{Department of Mathematics, M.I.T.,
     Cambridge, MA 02139, USA.~~kac@math.mit.edu}~~\thanks{Supported in part
  by Bert and Ann Kostant fund and by Simons collaboration grant.}~~and Minoru Wakimoto\thanks{~~wakimoto@r6.dion.ne.jp}
   \\ \text{}}

\maketitle

 \begin{abstract}
  We find an explicit formula for the conformal vector
  of any quantum affine $W$-algebra in its free field realization.
 \end{abstract}

\section{Introduction}
\label{sec:intro}

\ \ \ \ The chiral part of a (super)conformal field theory is a vertex algebra
which admits a conformal vector $L$, for which the eigenvalues of the energy operator $L_0$ lie in $\frac{1}{2}\ZZ_{\geq 0}$ and the multiplicity of the 0 eigenvalue is 1. An important class of such vertex algebras are
quantum affine W-algebras $W^{k}(\fg , x, f)$ \ \cite{KRW} , \cite{KW} (see also \cite{DSK}),
attached to a ``good''  datum
$(\fg , x , f, k)$
, where
$\fg = \fg_{\bar{0}} \oplus \fg_{\bar{1}}$ is a basic Lie superalgebra, i. e. a simple finite-dimensional Lie superalgebra over an algebraically closed field $\FF$ of characteristic 0
with reductive even part $\fg_{\bar{0}}$ and a fixed non-degenerate even invariant supersymmetric bilinear form $(. \, | \, .)$, $x\in \fg_{\bar{0}}$ is such that the eigenspace decomposition of $\fg$ with respect to $\ad  x$
defines a
$\tfrac{1}{2} \ZZ$-grading
\begin{equation}
  \label{eq:1.1}
  \fg \ = \ \underset{j \in\tfrac{1}{2} \ZZ}{\bigoplus} \ \fg_{j},
\end{equation}
$f\in \fg_{-1}$, and $k \in \FF$.

A datum $(\fg , x , f, k)$ is called {\it good} if $f\in \fg_{-1}$ is such that
\begin{equation}
  \label{eq:1.2}
  \fg^{f} \subset \fg^{}_{\leq 0}.
\end{equation}
Hereafter $\fg^f$ (resp. $\fg_j^f$) denotes the centralizer of $f$ in $\fg$ (resp. $\fg_j$),  
and we use notation $\fg^{}_{\leq m} = \underset{\rm j \leq m}{\oplus} \ \fg_{j}$, and similarly for $\geq m$, or $< m$, or $> m$. We also denote by $p_{> 0}$, $p_{j}$, etc., the projection of $\fg$ to $\fg_{> 0}$, $\fg_{j}$, etc., along (\ref{eq:1.1}).
A special case of a good datum is a {\it Dynkin datum}, defined by $x$ and $f$ from an $\sl_2$-triple $\{ e,x,f\}$ in $\fg_{\bar{0}}$, where $[x,e] = e,[e , f] = x ,[x ,f] = -f$.



Recall that a bilinear form $(. \, | \, .)$ on $\fg$ is called even if
$(\fg_{\bar{0}}  |  \fg_{\bar{1}}) = 0$, supersymmetric (resp. superskewsymmetric) if $(a  |   b) = (-1)^{p(a)p(b)} (b  |  a)$ \ (resp. $-(-1)^{p(a)p(b)} (b | a))$, and invariant if $([a,b]  |  c) = (a  |  [b,c])$.

In \cite{KRW} for an arbitrary datum $(\fg , x, f ,k)$ a vertex algebra homology complex
\begin{equation}
  \label{eq:1.3}
  (V^{k} (\fg)\otimes F^{\ch}\otimes F^{\ne}
     \, , \, d^{}_{(0)}) \, 
\end{equation}
was constructed, where $V^{k} (\fg)$ is the universal affine vertex algebra of level $k$ associated to $\fg$, and $F^{\ch}$ (resp. $F^{\ne})$ is the vertex algebra of free charged fermions based on $\fg^{}_{> 0} \oplus \fg^{*}_{> 0}$ with reversed parity (resp. of free neutral fermions based on $\fg_{1/2}$), and $d_{(0)}$ is an explicitly constructed odd derivation of the vertex algebra $\C^{k} (\fg,x,f): = V^{k} (\fg)\otimes F^{\ch}\otimes F^{\ne}$.

Recall \cite{K2} that for the construction of the vertex algebra of free fermions based on a vector superspace $A$, one needs a superskewsymmetric bilinear form on $A$. In the case of $F^{\ch}$ this bilinear form is defined via the pairing of $\fg_{>0}$ and its dual $\fg^{*}_{> 0}$, which is identified with
$\fg_{<0}$, using the bilinear form $(.|.)$; the former is non-degenerate since the latter is. In the case of $F^{\ne}$ this bilinear form is defined by the formula
\begin{equation}
  \label{eq:1.4}
  <a ,b>^{\ne} = (f  |  [a,b]), \ a ,b \in \fg^{}_{1/2}.
\end{equation}
The bilinear form (\ref{eq:1.4}) is non-degenerate if and only if
\begin{equation}
  \label{eq:1.5}
  \ad f : \fg_{1/2} \to \fg_{-1/2} \,\, \hbox{is a vector superspace isomorphism}.
\end{equation}  
Since property (\ref{eq:1.2}) is equivalent to $[f,\fg_j]=\fg_{j-1}$ if $j\leq 1/2$ \cite{KW},
property (\ref{eq:1.5}) holds
for any good datum.

The $\mathbb{Z}$-grading of the complex (\ref{eq:1.3}) is defined by
\begin{displaymath}
  \deg V^{k} (\fg) = \deg F^{\ne} = 0 , \ \deg \fg^{}_{>0} = \ -\deg \fg^{*}_{>0} = 1.
\end{displaymath}
The homology of the complex (\ref{eq:1.3}) is called the \emph{quantum affine
  W-algebra}, attached to the datum $(\fg ,x , f, k)$, and is denoted by $W^{k} (\fg ,x, f)$.


For a good datum, $[\fg_0, f]=\fg_{-1}$, hence the orbit $G_0 (f)$
is Zariski open in $\fg_{-1}$, and therefore the vertex algebra $W^k(\fg, x, f)$ is independent, up to isomorphism, of the choice of $f\in \fg_{-1}$,
  satisfying (\ref{eq:1.2}).
  
The main result of \cite{KW} on the structure of the vertex algebra $W^{k} (\fg ,x, f)$ is Theorem 4.1, which states that for a good datum the $j$\st{th} homology of the complex (\ref{eq:1.3}) is zero if $j \neq 0$, and the 0-th homology is the vertex algebra $W^{k} (\fg ,x, f)$, which is a subalgebra of the vertex algebra $\C^{k}(\fg,x,f)$ freely generated by $d_{(0)}$-closed elements $J^{\{a_i\}}$, where $a_1 , \ldots , a_{s}$ is a basis of $\fg^{f}$ consisting of eigenvectors of $\ad  x$.  The elements $J^{\{a_i\}}$ can be recursively computed, using equations (4.11) and (4.12) from \cite{KW}.
The "building blocks" for construction of elements $J^{\{a_i\}}$ are the elements $J^{(a)}$, $a \in \fg^{f}$, defined in \cite{KRW}, see formula
(\ref{eq:2.7})
in Section 2 of the present paper.  Theorem 4.1(a) from \cite{KW} states that for each $a \in \fg^{f}_{-j} \ (= \fg^{f} \cap \fg^{}_{-j})$ the element $J^{\{a\}} - J^{(a)}$ lies in the subalgebra of the vertex a;gebra $\C^{k} (\fg ,x, f)$, generated by elements $J^{(b)}$, where $b \in \fg^{}_{-s}$ with $0 \leq s < j$ (recall that $\fg^{f}_{j} \neq 0$ only for $j \leq 0$ by (\ref{eq:1.2})), and by the neutral fermions.

Consider the subalgebra
$\overline{\C}^k (\fg ,x, f)$
of the vertex algebra $\C^k (\fg ,x, f)$ generated by the elements $J^{(v)}$ with $v \in \fg_{\leq 0}$, and
by the neutral fermions.  It follows from the above discussion that, for a good datum,  all elements
$J^{\{v\}}$, $v \in \fg^{f}$, lie in $\overline{\C}^k (\fg ,x,f)$.  It is easy to see that the elements $J^{(v)}$ with $v \in \fg^{}_{0}$ and neutral fermions
generate a subalgebra
$\overline{\C}^k_{0} (\fg ,x, f)$
of the vertex algebra
$\C (\fg ,x, f)$
, and that the $J^{(v)}$ with $v \in \fg^{}_{< 0}$ generate an ideal $U^{}_{< 0}$ of $\overline{\C}^k (\fg ,x, f)$  , such that $\overline{\C}^k_0 (\fg ,x, f) \ \cap \ U^{}_{< 0} = 0$.  Hence, the canonical map $\overline{\C}^k (\fg ,x, f) \to
\overline{\C}^k_{0} (\fg ,x,f)$
induces a vertex algebra homomorphism
\begin{equation}
  \label{eq:1.6}
  W^{k} (\fg ,x, f) \ \xrightarrow \ {\overline{\C}}^k_{0} (\fg ,x,f).
\end{equation}
Since the vertex algebra $\overline{\C}^k_{0} (\fg ,x, f)$ is isomorphic to the tensor product of the universal affine vertex algebra $V^{k'} (\fg^{}_{0})$ of "shifted" level $k'$  (\cite{KW} , formula (2.5)), and the vertex algebra $F^{\ne}$, the map (\ref{eq:1.6}) may be viewed as a free field realization (FFR) of the W-algebra $W^{k} (\fg ,x, f)$.

In the case of a good datum, for the $a^{}_{i} \in \fg^{f}_{j}$ with $j = 0$ or $- 1/2$ the elements $J^{\{a_i\}}$ are uniquely determined by the $a_i$ and they are explicitly constructed in \cite{KW}, Section 2. The construction of these elements is still valid for an arbitrary datum, satisfying property (\ref{eq:1.5}).
Furthermore, provided that $k \neq -h^{\vee}$ (i.e. $k$ is not the critical level), we also constructed there an energy-momentum element $L$, with respect to which the elements $J^{\{a_i\}}$ have conformal weight $1 - m_i$, where $[x ,a_i] = m^{}_{i} a^{}_{i}$, and this construction is again valid for an arbitrary datum
satisfying property (\ref{eq:1.5}).

In \cite{KW}, Theorem 5.1(c), we found an explicit expression for
$L$ in terms of the elements $J^{(a_i)}$ and the neutral fermions
in the case of minimal W-algebras, which allowed us to compute the FFR
(\ref{eq:1.6}) for these W-algebras explicitly (see \cite{KW}, Theorem  5.2).

The main results of the present paper, valid for an arbitrary datum $(\fg, x,f,k)$ satisfying property (\ref{eq:1.5}),
are Theorem \ref{th:3.1}, which gives an explicit expression of the element $J^{ \{ f \} }$ in terms of the elements
$J^{(f)}$, $J^{(a)}$ with $a\in \fg_0$ and $\fg_{-1/2}$, and of neutral free fermions, and Theorem \ref{th:3.2}, which states that $L=-\frac{1}{k+h^\vee}J^{ \{ f \} }$,
provided that $k \neq - h^{\vee}$. This leads to an explicit formula for the image of $L$ under the FFR (\ref{eq:1.6}) for an arbitrary quantum affine W-algebra, attached to a good datum.

Throughout the paper the base field $\FF$ is an algebraically closed field of characteristic 0.

\vspace{5mm}

\section{ {The complex $(\C^k(\fg, x, f), \,d_{(0)})$ and the W-algebra $W^k (\fg ,x, f)$.}}
\label{sec:2}

First, recall the construction of vertex algebras $V^k (\fg)$, $F^{\ch}$ and
$F^{\ne}$. We shall use the very convenient language of non-linear Lie conformal superalgebras and $\lambda$-brackets \cite{DSK}. 

Given a Lie superalgebra $\fg$ with an invariant supersymmetric bilinear form B, consider the
$\FF [\partial]$-module
$\FF [\partial] \otimes \fg$ with the following non-linear $\lambda$-bracket
\begin{equation}
  \label{eq:2.1}
  [a^{}_{\lambda} b] = [a ,b] + \lambda B (a ,b) 1 , \hspace{4mm} a ,b \in \fg ,
\end{equation}

\noindent and the universal enveloping vertex algebra $V^{B} (\fg)$ of this non-linear Lie conformal superalgebra.  One often fixes such a bilinear form
$(.  |  .)$, lets $B (a ,b) = k (a  |  b)$, $k \in \FF$,
and uses the notation
$V^k (\fg) = V^{B} (\fg)$. Then $V^{k} (\fg)$ is called the universal affine vertex algebra for $\fg$ of level $k$.

The vertex algebra $F (A)$ of fermions based on the vector superspace $A$ with a skewsupersymmetric bilinear form $< . , . >$ is defined as the universal enveloping vertex algebra of the
$\FF [\partial]$-module
$\FF [\partial] \otimes \fg$ with the non-linear $\lambda$-bracket
 \begin{equation}
  \label{eq:2.2}
  [a_{\lambda} b]  =  < a ,  b > 1, \hspace{4mm} a ,b \in A .
\end{equation}

Given a datum $(\fg ,x , f, k)$ as described in the introduction, the associated
homology complex $(\C^{k} (\fg ,x, f) ,d_{(0)})$ is constructed as follows.  Let $A^{\ch} =\Pi( \fg_{> 0} \oplus \fg^{*}_{> 0})$, where $\Pi$ stands for the reversal of parity, and define on it a skewsupersymmetric bilinear form $<.  ,  .>^{\ch}$ by the pairing of the vector superspace $\Pi \fg_{>0}$ and its dual
$\Pi \fg^{*}_{> 0}$, and let
$A^{\ne} = \fg_{1/2}$ with the bilinear form $< a,b >^{\ne}$ defined by (1.4).  
Then $\C^{k} (\fg ,x, f)$ is the universal enveloping vertex algebra of the non-linear Lie confirmal superalgebra
$\FF [\partial] (\fg \oplus A^{\ch} \oplus A^{\ne})$ with the $\lambda$-brackets defined by (2.1) and (2.2) on the summands and zero between the distinct summands.  The vertex algebra $\C^k (\fg ,x, f)$ is isomorphic to
$V^k (\fg) \otimes F^{\ch} \otimes F^{\ne}$, where $F^{\ch}=F(A^{\ch})$ and
$F^{\ne}=F(A^{\ne})$.  Letting
  \begin{equation}
     \label{eq:2.3}
     \deg V^k(\fg) = \deg F^{\ne} = 0 , \hspace{4mm} \deg \fg_{> 0} = - \deg \fg^{*}_{> 0} = 1\,\, \hbox{on}\,\, F^{\ch},
 \end{equation}
  \noindent defines a $\mathbb{Z}$-grading of this vertex algebra :
\begin{equation}
    \label{eq:2.4}
    \C^{k} (\fg ,x, f) =
    \underset{j \in \mathbb{Z}}{\oplus} \, \C^{k}_{j}\, .
\end{equation}

In order to define the differential $d_{(0)}$ choose a basis
$\{u_{i} \}^{}_{i \in S}$
of $\fg$, compatible with parity and the $\frac{1}{2} \mathbb{Z}$-grading (1.1), let $\{ u^i \}_{i \in S}$ be its dual basis of $\fg$ with respect to the bilinear form $(.|.)$,  i. e.   $(u_i|u^j)=\delta_{i,j}$,
and denote by $\{u_i \}^{}_{i \in S_{ > 0}}$ (resp.
$\{ u_{i} \}_{i \in S_{j}}$)
the part of $\{u_i\}_{i\in S}$, which is a basis of $\fg_{>0}$ (resp. $\fg_{j}$). Let $\{\varphi_{i} \}^{}_{i \in S_{ > 0}}$
be the corresponding to $\{u_{i} \}^{}_{i \in S_{>0}}$ basis of $\Pi\fg_{> 0}$, and let $ \{\varphi^{i} \}^{}_{i \in S_{ > 0}}$ be the dual basis of  
$\Pi\fg^{*}_{> 0}$.
  Let $\{\Phi_i\}_{{i \in S}_{1/2}}$ be the corresponding to $\{ u_{i} \}_{i \in S_{1/2}}$ basis of   $A^{\ne}$.
  For $u \in \fg$ let
  $\Phi_{u} = \underset{i \in S_{1/2}}{\sum}$
  $\gamma_{i} \Phi_{i}$ (resp. $\varphi_{u} = \underset{i \in S_{> 0}}{\sum} \gamma_{i} \varphi_{i}$)
     if
   $p_{1/2} u  $ (resp. $p_{> 0}u$)
   $=\underset{i}{\sum} \gamma_{i} u_i$, where $\gamma_i\in \FF$.

Introduce the following element of the vertex algebra $\C^{k} (\fg ,x, f)$ :
\begin{equation}
    \label{eq:2.5}
    \begin{array}{l}
        d = \underset{i \in S_{> 0}}{\sum} \big((-1)^{p(i)} u_{i} \otimes \varphi^{i} \otimes 1 + (f  |  u_{i}) \otimes \varphi^{i} \otimes 1\big) + \underset{i \in S_{1/2}}{\sum} 1 \otimes \varphi^{i} \otimes \Phi_{i}  \\
        +\frac{1}{2} \underset{i ,j  \in S_{>0}}{\sum} (-1)^{p(i)} 1
     \otimes : \varphi^{i} \varphi^{j} \varphi_{[u_j,u_i]}: \otimes 1,
    \end{array}
\end{equation}
where $p(i)$ stands for the parity $p(u_{i})$ in the Lie superalgebra $\fg$.
The element $d$ is independent of the choice of the basis of $\fg$. One checks that $[d_{\lambda} d] = 0$
(\cite{KRW}, Theorem 2.1), therefore $[d_{(0)} ,d_{(0)}] = 0$ and $d^{2}_{(0)} = 0$.  Thus, $d_{(0)}$ is a homology differential of the vertex algebra $\C^{k} (\fg ,x, f)$.  The homology of the complex $(\C^{k} (\fg ,x, f), d_{(0)})$ is the quantum affine W-algebra $W^{k} (\fg ,x, f)$.

One has the following formulas for the action of $d_{(0)}$ of the generators of the vertex algebra $\C^{k} (\fg ,x, f)$ (cf. \cite{KRW}, formula(2.4)), where $a \in \fg$, and thereafter we skip the tensor product signs:
\bea(lll)
&d_{(0)} a = &
\underset{j \in S_{> 0}}{\sum}
\big((-1)^{p (j)} :\varphi^j [u_j,a]: + k(a|u_j)\partial \varphi^j\big);\\[1ex]
%
&d_{(0)} \varphi_a = & p_{> 0} (a) + (a|f) + (-1)^{p(a)} 
\Phi_{a} + \underset{j \in S_{> 0}}{\sum} :
\varphi^{j} \varphi_{[u_{j} ,p_{> 0} a]} :; \\[1ex]
& d_{(0)} \varphi^{i} = &- \frac{1}{2}
\underset{j,s \in S_{> 0}}{\sum}
(-1)^{p(i)p(j)}c_{j,s}^i :\varphi^j \varphi^s:, \,\,\hbox{where}\,\,
  [u_j,u_s]=\sum_i c_{j,s}^i u_i \ ; \\[1ex]
& d_{(0)} \Phi_{a} = &\underset{j \in S_{1/2}}{\sum}(u_j|[a,f])
\varphi^{j}.
\elea(eq:2.6)

Recall that the "building blocks" for elements of the W-algebra $W^{k}(\fg ,x, f)$ are the following elements of $\C^{k}(\fg ,x, f)$ for $v\in \fg$ : 
\begin{equation}
    \label{eq:2.7}
    J^{(v)} = v + \sum_{\substack{j \in S_{> 0}}} (-1)^{p(j)}
    : \varphi_{[v,u_j]}\varphi^j :.
\end{equation}

Denote by
$\C^k_- (\fg ,x, f)$
the subalgebra of the vertex algebra
$\C^k (\fg ,x, f)$
, generated by the elements
$J^{(v)}$ ($v\in S_{\leq 0}$), 
$\varphi^i$ ($i\in S_{>0}$), and
$\Phi_i$ ($i\in S_{1/2}$).
By (\ref {eq:2.6}), this subalgebra is $d_{(0)}$-invariant. Let, as above,
$\overline{\C}^k (\fg ,x, f)$
be the subalgebra of $\C^k_- (\fg ,x, f)$, generated
by the $J^{(v)}$ ($v\in S_{\leq 0}$), and the $\Phi_i$ ($i\in S_{1/2}$).
Then, by (\ref{eq:2.6}), we have
\begin{equation}
  \label{eq:2.8}
\overline{\C}^k (\fg ,x, f) \cap d_{(0)}\C^k_- (\fg ,x, f)=0. 
\end{equation}
  
Let $\kappa (a ,b) = str_{\fg} (\ad a)(\ad b)$ be the Killing form on $\fg$.
Recall that
\begin{equation}
  \label{eq:2.8a}
\kappa(a,b)= 2h^\vee(a|b).
  \end{equation}
For the projection $p_j : \fg \to \fg_j$ (resp $p_{>0}:\fg \to \fg_{> 0}$) along the grading (\ref{eq:1.1}), define the "partial" Killing forms 
\begin{equation}
    \label{eq:2.9}
    \kappa_{j} (a ,b) = \str_{\fg} (p_j (\ad a) (\ad b))\,
    (\hbox{resp.}\, \kappa_{> 0} (a ,b) = \str_{\fg} (p_{> 0} (\ad a) (\ad b)).
\end{equation}

Elements $J^{(v)}$ for $v \in \fg_0$ obey $\lambda$-brackets of the universal
affine vertex algebra $V^{B_{0}} (\fg_{0})$ [KRW , Theorem 2.4(c)]:
\begin{equation}
    \label{eq:2.10}
    [{J^{(a)}}_{\lambda} J^{(b)}] = J^{(a ,b)} + \lambda B_{0} (a ,b) , \ a ,b \in \fg_{0},
\end{equation}
where
\begin{equation}
    \label{eq:2.11}
    B_{0} (a ,b) = k(a  |  b) + \frac{1}{2} (\kappa (a ,b) - \kappa_{0} (a ,b)).
\end{equation}
In fact, (\ref{eq:2.10}) holds for $a \in \fg_{i}, \,b \in \fg_{j}$ with $ij \geq 0$ (of course $B_{0} (a ,b) = 0$ if $ij\geq 0$ and $i$ or $j$ is non-zero).  Hence, we have the following 

\begin{corollary}
  \label{cor:2.1}
 The subalgebra $V^{B_{0}} (\fg_{< 0})$ of the vertex algebra $V^{B_{0}} (\fg_{\leq 0})$ is an ideal, the factor algebra being
$V^{B_{0}} (\fg_{0})$.
\end{corollary}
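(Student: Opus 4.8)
\emph{Proof proposal.} Throughout I read $V^{B_0}(\fg_{<0})$ as the vertex algebra ideal of $V^{B_0}(\fg_{\leq 0})$ generated by $\fg_{<0}$ (this ideal is the ``affine'' analogue of the ideal $U_{<0}\subset\overline{\C}^k(\fg,x,f)$ of the Introduction), and $B_0$ denotes the invariant form on $\fg_{\leq 0}$ equal to (\ref{eq:2.11}) on $\fg_0\times\fg_0$ and equal to $0$ as soon as one argument lies in $\fg_{<0}$ --- its invariance on $\fg_{\leq 0}$ being implicit in the validity of (\ref{eq:2.10}) for $ij\ge 0$. The plan is first to reduce everything to the Lie-superalgebra level: since the bracket respects the grading (\ref{eq:1.1}), $\fg_{<0}$ is a graded ideal of $\fg_{\leq 0}$ with $\fg_{\leq 0}/\fg_{<0}\cong\fg_0$, $\fg_{<0}$ lies in the radical of $B_0|_{\fg_{\leq 0}}$, and $B_0$ descends to exactly the form (\ref{eq:2.11}) on $\fg_0$. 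Hence, inside $\FF[\partial]\otimes\fg_{\leq 0}$ with the $\lambda$-bracket (\ref{eq:2.1}) for $B=B_0$, the submodule $\FF[\partial]\otimes\fg_{<0}$ is a Lie conformal superalgebra ideal whose quotient is $\FF[\partial]\otimes\fg_0$ carrying precisely the $\lambda$-bracket defining $V^{B_0}(\fg_0)$.

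Next I would invoke universality. Functoriality of the universal enveloping vertex algebra construction \cite{DSK} converts the surjection $\FF[\partial]\otimes\fg_{\leq 0}\twoheadrightarrow\FF[\partial]\otimes\fg_0$ into a surjective vertex algebra homomorphism $\pi\colon V^{B_0}(\fg_{\leq 0})\twoheadrightarrow V^{B_0}(\fg_0)$ whose kernel is the ideal generated by $\fg_{<0}$, i.e.\ $V^{B_0}(\fg_{<0})$. This already yields that $V^{B_0}(\fg_{<0})$ is an ideal and that the factor algebra is $V^{B_0}(\fg_0)$. What remains is to realize that factor algebra as the vertex subalgebra of $V^{B_0}(\fg_{\leq 0})$ generated by $\fg_0$: that subalgebra is itself a copy of $V^{B_0}(\fg_0)$, since $\lambda$-brackets among elements of $\fg_0$ stay in $\fg_0$ with no $B_0$-contribution involving $\fg_{<0}$, so the point is that it meets $V^{B_0}(\fg_{<0})$ only in $0$, equivalently that $\pi$ splits.

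For the splitting I would combine the PBW theorem for these vertex algebras \cite{DSK} with an auxiliary grading. Because $\ad x$ is a derivation of $\fg_{\leq 0}$ preserving $B_0$, it extends to a diagonalizable derivation of $V^{B_0}(\fg_{\leq 0})$, producing a $\tfrac12\ZZ_{\leq 0}$-grading in which $\vac$ and $\partial$ are of degree $0$, a generator $v\in\fg_j$ is of degree $j$, and $x_{(n)}y$ is of degree $\deg x+\deg y$ for every $n$. Fixing an ordered homogeneous basis of $\fg_{\leq 0}$ refining $\fg_{\leq 0}=\fg_{<0}\oplus\fg_0$, PBW gives a basis of $V^{B_0}(\fg_{\leq 0})$ consisting of the ordered normally ordered products of $\partial$-derivatives of the basis vectors; those products with no factor from $\fg_{<0}$ span the degree-$0$ component, which therefore equals the subalgebra generated by $\fg_0$, while the remaining ones span the strictly negative part, which one checks coincides with the ideal $V^{B_0}(\fg_{<0})$ and is manifestly absorbing under all products $x_{(n)}(\,\cdot\,)$ (a degree $\le 0$ added to a strictly negative degree stays strictly negative). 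Thus $V^{B_0}(\fg_{\leq 0})=V^{B_0}(\fg_{<0})\oplus V^{B_0}(\fg_0)$, $\pi$ is the projection onto degree $0$, and the factor algebra is $V^{B_0}(\fg_0)$ as claimed.

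The statement is nearly a formality once (\ref{eq:2.10})--(\ref{eq:2.11}) are available in the form valid for $ij\ge 0$; the only step that genuinely needs attention is the bookkeeping in the last paragraph --- confirming that the ideal generated by $\fg_{<0}$ and the subalgebra generated by $\fg_0$ are complementary, for which the $\ad x$-grading together with PBW is the natural device --- and, closely related, being explicit that ``$V^{B_0}(\fg_{<0})$'' here denotes that ideal rather than the (strictly smaller) vertex subalgebra generated by $\fg_{<0}$.
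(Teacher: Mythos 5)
Your proposal is correct and takes essentially the same route the paper does implicitly: the Corollary is stated with no argument beyond the observation that (\ref{eq:2.10})--(\ref{eq:2.11}) hold for $ij\geq 0$ with the central term vanishing unless both arguments lie in $\fg_0$, which is exactly the input you use ($\fg_{<0}$ is an ideal of $\fg_{\leq 0}$ contained in the radical of $B_0$, so the $\lambda$-brackets descend). Your extra care --- the functoriality step, the PBW plus $\ad x$-grading argument showing that the ideal generated by $\fg_{<0}$ is complementary to the subalgebra generated by $\fg_0$, and the remark that ``$V^{B_{0}}(\fg_{<0})$ is an ideal'' must be read as referring to that ideal rather than to the vacuum-containing subalgebra --- supplies details the paper leaves unstated and matches the way the analogous claim about $U_{<0}$ is used in the Introduction.
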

\vspace{4mm}

The proof of the following formula from \cite{KW}, formula (2.6),
uses formulas
(\ref{eq:2.6}) :
\begin{eqnarray}
    \label{eq:2.13}
    &\,\,\,\,\,\,\, d_{(0)} (J^{(v)}) =&
    \sum_{j \in S_{>0}} ([f,v]|u_j)    \varphi^j
    + \sum_{ j \in S_{>0}}
    (-1)^{p(v)(p(j)+1)}
    :\varphi^{j} \Phi_{p_{1/2}[v,u_{j}]}:\\
\nonumber
    && -\sum_{\substack{j \in S_{>0}\\ [v,u_j] \in \fg_{\leq 0}}}
    (-1)^{p(j) (p(v)+1)} :
    \varphi^{j} J^{([v,u_{j}])} :
    + \sum_{j \in S_{>0}} (k(v|u_{j})
    +\kappa_{>0} ( v, u_j))
    \partial \varphi^j  .
\end{eqnarray}

From now on we shall assume that condition (\ref{eq:1.5}) holds, so that we can define the basis
$\{\Phi^i\}_{i\in S_{1/2}}$
of $A^{\ne}$, dual to $\{\Phi_i\}_{i\in S_{1/2}}$ with respect to the bilinear form (\ref{eq:1.4}). Then we have:
$d_{(0)}\Phi^i=\varphi^i$.

As has been mentioned in the introduction, for a good grading the $d_{(0)}$-closed elements
  $J^{\{ a \}}$ are uniquely determined for $a \in \fg^{f}_{j}$ for $j = 0$ or $- 1/2$.  The $d_{(0)}$-closed elements $J^{\{a \}}$ for $a \in \fg^{f}_{0}$
  can be constructed, provided that (\ref{eq:1.5}) holds, and they are as follows (see \cite{KRW}, Theorem 2.4(a)) :
\begin{equation}
    \label{eq:2.14}
    J^{\{ a \}} = J^{(a)} \ + \frac{(-1)^{p (a)}}{2} 
    \sum_{\substack{j \in S_{1/2}}} : 
    \Phi^j \Phi_{[u_j ,a]} :.
\end{equation}
These elements obey $\lambda$-brackets of the universal affine vertex algebra $V^{B_{1/2}} (\fg^{f}_{0})$:
\begin{equation}
    \label{eq:2.15}
    [{J^{\{a \}}}_{\lambda} J^{\{ b \}}] = J^{\{[a ,b] \}} + \lambda B_{1/2} (a ,b),
\end{equation}
where
\begin{equation}
    \label{eq:2.16}
    B_{1/2} (a,b) = k (a|b) + \kappa_{> 0} (a,b) - \frac{1}{2}
    \kappa_{1/2} (a,b).
\end{equation}


The $d_{(0)}$-closed elements $J^{\{ v \}}$ for $v \in \fg^{f}_{-1/2}$ are as follows (see \cite{KW} , Theorem 2.1(d)]):
\begin{equation}
    \label{eq:2.17}
    J^{\{v \}} = J^{(v)} - \frac{(-1)^{p(v)}}{3} \sum_{\substack{i ,j \in
        S_{1/2}}} :\Phi^{i} \Phi^{j} \Phi_{[u_j,[u_{i},v]]}:
  +\underset{i \in S_{1/2}}{\sum}\big(:J^{([v, u_i])}\Phi^i:
  -(k(v|u_i)+\kappa_{>0}(v,u_i))\partial\Phi^i\big),  
\end{equation}
and one has (\cite{KW}, Theorem 2.1(e)) :
\begin{equation}
    \label{eq:2.18}
    [ {J^{\{ a \}}}_{\lambda}  J^{\{v \}}] =  J^{\{[a,v] \}} \ \ \hbox{if} \ \ a \in \fg^{f}_{0}, \,v \in \fg^{f}_{- 1/2}.
\end{equation}

\begin{remark}
  \label{rem:2.1}
  The elements
  $\varphi^i$
  coincide with the elements, denoted by $\varphi_i^*$
  in \cite{KRW} and \cite{KW}, but are different from the elements, denoted by $\varphi^i$ in \cite{DSK}. The advantage of this less natural choice is that
  then the construction of the $W$-algebra $W^k(\fg,x,f)$ works for an arbitrary finite-dimensional Lie superalgebra $\fg$ with an arbitrary supersymmetric
  (possibly degenerate) invariant bilinear form $(.|.)$. (The simplicity of $\fg$ and the non-degeneracy of $(.|.)$ are needed
in the next sections.)
 \end{remark}  
\vspace{4mm}
\section{ {A formula for $J^{\{f\}}$
    and the energy-momentum element $L$ of $W^{k}(\fg ,x,f)$}}
\label{sec:3}

Choose a Cartan subalgebra $\fh$ of $\fg_0$, containing a Cartan subalgebra
of $\fg_0^f$. It is a Cartan subalgebra of $\fg$. Choose a set of positive roots of $\fg$, compatible with the grading (\ref{eq:1.1}).
Recall that the dual Coxeter number $h\spcheck$ of the simple Lie superalgebra $\fg$ with the given invariant bilinear form $(. \ | \ .)$ is the half of the eigenvalue of the Casimir operator $\sum_{j\in S} u^ju_j$ on $\fg$, and  it is given by the formula
\begin{equation}
    \label{eq:3.1}
    h\spcheck \ = \ (\rho  |  \theta) \ + \ \frac{1}{2} \ (\theta  |  \theta) \ ,
\end{equation}
where
$\theta$ is the highest root and $\rho$ is the half of the difference between sums of positive even roots and positive odd roots.
Provided that $k \neq - h\spcheck$, the energy-momentum (or Virasaro) element of the vertex algebra $\C^{k} (\fg ,x, f)$ is defined by \cite{KRW} for an arbitrary datum, satisfying (\ref{eq:1.5}) : 
\begin{equation}
    \label{eq:3.2}
    L \ = \ L^{\fg} \ + \ \partial x \ + \ L^{\ch} \ + \ L^{\ne} ,
\end{equation}
where
\begin{eqnarray*}
&L^{\fg} =& \frac{1}{2 (k + h\spcheck)} \underset{j \in S}{\sum}  \ : u^{j} u_{j} :  ,   \\
    &L^{\ch}   =&  \underset{j \in S_{> 0}}{\sum} (1 - m_{j}) : (\partial
  \varphi^{j}) \varphi_{j} : \ - \ m_{j} : \varphi^{j} \partial \varphi_{j} : ,  \\
    &L^{ne} =& \frac{1}{2} \underset{j \in S_{1/2}}{\sum} \
    : (\partial \Phi^{j}) \Phi_{j} :  , 
\end{eqnarray*}
and the $m_{j}$ are defined by $[x ,u_{j}] = m_{j} u_{j}$.

The central charge of this Virasoro element is equal to (see \cite{KRW}, Remark 2.2) 
\begin{equation}
    \label{eq:3.3}
    c (\fg ,x ,k) \ = \sdim \fg_0 -\frac{1}{2}\sdim \fg_{1/2}-
    \frac{12}{k + h^\vee}|\rho -(k+h^\vee)x|^2.
\end{equation}
With respect to this $L$ the elements $\varphi_{j}$ (resp. $\varphi^{j}$) are primary of conformal weight $1 - m_{j}$ (resp. $m_{j}$), the $\Phi_{j}$ are primary of conformal weight $1/2$, and $a \in \fg_{j}$ has conformal weight $1 - j$ and is primary, unless $j = 0$ and $(a  |  x) \neq 0$. Actually one has:
\begin{equation}
  \label{eq:3.4}
        [L_\lambda a]=(\partial +\lambda)a -\lambda^2 k(a|x)\,\, \hbox{for} \,\,a\in \fg_0.
\end{equation}        

Furthermore, it was shown in \cite{KRW} that the element $d$, defined by $(2.5)$ is primary of conformal weight 1, hence $[d_{\lambda} L] = \lambda d$ and $d_{(0)} L = [d_{\lambda} L] |_{\lambda = 0} = 0$.  Hence, the homology class of $L$ defines an energy-momentum element of the vertex algebra
$W^{k} (\fg ,x, f)$
, which is denoted again by $L$. Note that though, for a good datum, the $W$-algebra $W^{k} (\fg ,x, f)$ is independent, up to isomorphism, of the choice of $x$ with given $f$ \cite{AKM}, the element $L$ does depend on $x$.

As has been mentioned in the introduction, the explicit expressions of the elements $J^{\{a\}}$ which generate the subalgebra $W^{k}(\fg ,x,f)$ of the vertex algebra ${\overline{\C}}^{k} (\fg , x, f)$, associated to a good datum, are known only for $a \in \fg_{-j}$, where $j = 0$ and $\frac{1}{2}$. In view of (\ref{eq:1.6}) it is important to find an explicit expression for $J^{\{ f \}}$.  This is the first main result of the paper.
The second main result is the formula $L=-\frac{1}{k+h^\vee}J^{\{f\}}$
in $W^k(\fg,x,f)$. Both results hold for an arbitrary datum $(\fg, x,f,k)$, satisfying (\ref{eq:1.5}).




Let
\begin{displaymath}
    \Omega_{0} \ = \ \underset{j \in S_{0}}{\sum} \ (\ad   u^{j}) (\ad  u_{j}).
\end{displaymath}
\begin{proposition}
  \label{prop:lambda}
  The operator $\Omega_{0}$
 is diagonalizable on $\fg_j$ for each $j> 0$.
\begin{proof}
  Choose a Cartan subalgebra $\fh$ of $\fg_{0}$ ; it is a Cartan subalgebra of $\fg$, containing $x$.  Choose a set of positive roots in $\fh^{*}$, compatible with the $\frac{1}{2} \mathbb{Z} $-grading (\ref {eq:1.1}), and let $e_{i} ,f_{i}$ be the Chevalley generators of $\fg$.  Then for each $j>0$,
  the $\fg$-module
  $\fg_{j}$ is the sum of lowest weight modules with the lowest weight vectors that are commutators of the $e_{i}$, such that $e_{i}\in \fg_{>0}$. Since the restriction of $\Omega_{0}$ to each of these summands is diagonalizable, proposition follows.
\end{proof}
\end{proposition}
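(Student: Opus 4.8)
\emph{Plan.} The plan is to identify $\Omega_0$ with the image, under the adjoint action, of the Casimir element of $\fg_0$ attached to the restriction of $(.\,|\,.)$ to $\fg_0$. Since $\fg_i$ is orthogonal to $\fg_k$ unless $i+k=0$, that restriction is nondegenerate, so $\Omega_0=\sum_{j\in S_0}(\ad u^j)(\ad u_j)$ lies in $Z(U(\fg_0))$; hence, on each $\fg_j$ with $j>0$, the operator $\Omega_0$ is $\fg_0$-equivariant. It therefore suffices to split each $\fg_j$ into a direct sum of $\fg_0$-submodules on which $\Omega_0$ acts by a scalar.

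Next I would fix, as in the statement, a Cartan subalgebra $\fh\ni x$ of $\fg_0$ (hence of $\fg$) and a positive system compatible with \eqref{eq:1.1}, with Chevalley generators $e_i,f_i$ of $\fg$; the $e_i,f_i$ lying in $\fg_0$ together with $\fh$ generate $\fg_0$, while the $e_i\in\fg_{>0}$ account for raising the $x$-degree. The structural point is that for each $j>0$ the $\fg_0$-module $\fg_j\subset\fn_+$ decomposes as a direct sum $\bigoplus_\alpha V_\alpha$ of finite-dimensional lowest-weight $\fg_0$-modules, each $V_\alpha$ being generated by a lowest-weight vector which is an iterated bracket of Chevalley generators $e_i$ at least one of which lies in $\fg_{>0}$; this is the structural content of a triangular decomposition of the basic superalgebra $\fg$ aligned with the grading (cf. the set-up of \cite{KW}).

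Finally, each $V_\alpha$ is a quotient of the lowest-weight Verma module of $\fg_0$ with some lowest weight $\lambda_\alpha$, so the central element $\Omega_0$ acts on $V_\alpha$ by the single scalar $\chi_{\lambda_\alpha}(\Omega_0)$ (a Harish-Chandra-type expression quadratic in $\lambda_\alpha$); in particular $\Omega_0|_{V_\alpha}$ is scalar, hence diagonalizable, and summing over $\alpha$ gives diagonalizability of $\Omega_0$ on $\fg_j$. The main obstacle is exactly the structural claim of the middle paragraph: because $\fg_0$ is only reductive and is a Lie superalgebra, its finite-dimensional modules need not be completely reducible, so one cannot invoke generic representation theory; one must use that $\fg_j$ is a graded piece of the adjoint module of the simple $\fg$ with the grading aligned to a triangular decomposition in order to get an actual direct-sum decomposition (a mere composition series would leave $\Omega_0$ only block-triangular, hence possibly non-diagonalizable). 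An alternative route — writing $\Omega_0 = 2h^\vee\,\mathrm{id}-\Omega_{>0}-\Omega_{<0}$ using that the full Casimir of $\fg$ acts on the adjoint module by the scalar $2h^\vee$ (as in \eqref{eq:2.8a}, \eqref{eq:3.1}) — only relocates this difficulty into the analysis of $\Omega_{>0}+\Omega_{<0}$.
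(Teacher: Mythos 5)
Your proposal follows the same route as the paper's proof: fix a Cartan subalgebra of $\fg_0$ and a positive system compatible with the grading, exhibit each $\fg_j$ ($j>0$) as built out of lowest-weight $\fg_0$-modules whose lowest-weight vectors are iterated brackets of the Chevalley generators $e_i$ lying in $\fg_{>0}$, and use that $\Omega_0$ --- the Casimir of $\fg_0$ for the (nondegenerate) restriction of $(.\,|\,.)$, hence central in $U(\fg_0)$ --- acts on each such module by the scalar attached to its lowest weight. Your first and third paragraphs are a correct, somewhat more explicit version of what the paper leaves implicit.

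Where you diverge is in what you demand of the decomposition, and there you have made the step strictly harder than necessary. You insist on a \emph{direct} sum $\fg_j=\bigoplus_\alpha V_\alpha$ and declare establishing it ``the main obstacle,'' on the grounds that finite-dimensional $\fg_0$-modules need not be completely reducible in the super setting. But directness is irrelevant to the conclusion: if $\fg_j=\sum_\alpha V_\alpha$ is merely a (possibly redundant) sum of $\Omega_0$-invariant subspaces on each of which $\Omega_0$ acts by a scalar, then each $V_\alpha$ sits inside a single eigenspace of $\Omega_0$, so $\fg_j$ is spanned by eigenvectors and $\Omega_0$ is diagonalizable on it. This is exactly why the paper's proof says ``sum,'' not ``direct sum,'' and it is how the complete-reducibility worry you raise is sidestepped. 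Your remark that a composition series alone would only give block-triangularity is the right diagnosis of why \emph{some} spanning statement is needed, but the correct remedy is ``$\fg_j$ is spanned by lowest-weight submodules,'' not ``$\fg_j$ decomposes as a direct sum of them.'' With that weakening, your middle paragraph becomes precisely the assertion the paper uses (which, to be fair, the paper also states rather than proves in detail: one still has to organize the iterated brackets of the $e_i$ generating $\fg_{>0}$ into $\fg_0$-submodules generated by lowest-weight vectors), and the rest of your argument goes through unchanged.
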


Let $\rho_{> 0}$ (resp. $\rho_{j}$)$\in \fh^*=\fh$ be the half of the difference between the sums of positive even and positive odd roots of $\fh$ in
$\fg_{> 0}$ (resp. $\fg_j$). (We idenitfy $\fg$ with $\fg^*$  using $(.|.)$).
\begin{proposition}
  \label{prop:3.2}
  The element $\rho_{>0}$ lies in the center of $\fg_0$ if (\ref{eq:1.5}) holds.
  \begin{proof}
    By the Jacobi identity for
    $J^{(a)}$, $J^{(b)}$, with $a,b\in \fg_0$, and $L$, we have
\begin{displaymath}
[{J^{(a)}}_\lambda [{J^{(b)}}_\mu L]]-(-1)^{p(a)p(b)}[{J^{(b)}}_\mu [{J^{(a)}}_\lambda L]]=[[{J^{a)}}_\lambda J^{(b)}]_{\lambda +\mu} L].
     \end{displaymath}   
Using (\ref{eq:3.4}) and the skewsymmetry of the $\lambda$-bracket, this gives $0=-(\lambda +\mu)^2(\rho_{>0}|[a,b])$.
Hence $([\rho_{>0},a]|b)=0$ for all $b\in \fg_0$. It follows that
$[\rho_{>0},\fg_0]=0$.
  \end{proof}
  \end{proposition}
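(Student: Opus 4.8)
The plan is to reduce the assertion to a root-theoretic fact and then recognize $2\rho_{>0}$ as the restriction to $\fh$ of the supercharacter of the $\fg_0$-module $\fg_{>0}$. First, since $\fg_0$ is the centralizer $\fg^x$ of the semisimple element $x$, it carries a root space decomposition $\fg_0 = \fh \oplus \bigoplus_{\alpha(x)=0} \fg_\alpha$; as $\rho_{>0} \in \fh$ (via $\fh^* = \fh$), the element $\rho_{>0}$ centralizes $\fg_0$ if and only if $\alpha(\rho_{>0}) = (\rho_{>0}\,|\,\alpha) = 0$ for every root $\alpha$ of $\fg_0$. So it is enough to show that $\rho_{>0}$ is $(.\,|\,.)$-orthogonal to every root of $\fg_0$.

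The key observation I would use: all roots occurring in $\fg_{>0}$ are positive (the positive system is chosen compatible with the grading (\ref{eq:1.1})), so by definition $\rho_{>0} = \tfrac12 \sum_{\alpha} (-1)^{p(\alpha)} \alpha$, the sum over the roots $\alpha$ in $\fg_{>0}$; hence expanding a supertrace over $\fg_{>0}$ as a sum over roots gives $\str_{\fg_{>0}}(\ad h) = \sum_{\alpha} (-1)^{p(\alpha)} \alpha(h) = (2\rho_{>0}\,|\,h)$ for all $h \in \fh$. Now for any $a \in \fg_0$ the subspace $\fg_{>0}$ is $\ad a$-invariant (since $[\fg_0,\fg_j] \subseteq \fg_j$), so $a \mapsto \str_{\fg_{>0}}(\ad a)$ is a well-defined linear functional on $\fg_0$; being the supercharacter of a module it is $\ad\fg_0$-invariant, $\str_{\fg_{>0}}(\ad[a,b]) = \str_{\fg_{>0}}([\ad a,\ad b]) = 0$, hence it vanishes on $[\fg_0,\fg_0]$. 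Finally, for a root $\alpha$ of $\fg_0$ pick $e_\alpha \in \fg_\alpha$, $f_\alpha \in \fg_{-\alpha}$ (both in $\fg_0$) with $(e_\alpha\,|\,f_\alpha) \neq 0$; then $[e_\alpha, f_\alpha]$ is a nonzero multiple of the element $t_\alpha \in \fh$ representing $\alpha$, so $t_\alpha \in [\fg_0,\fg_0]$ and therefore $(2\rho_{>0}\,|\,\alpha) = (2\rho_{>0}\,|\,t_\alpha) = \str_{\fg_{>0}}(\ad t_\alpha) = 0$. This works for every root of $\fg_0$, even or odd, isotropic or not, which is exactly what was needed; note that this argument does not actually use (\ref{eq:1.5}), which is the standing hypothesis needed only to make sense of $L$.

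I expect the only friction to be the routine Lie-superalgebra bookkeeping: that $\fg_0 = \fg^x$ has the stated root space decomposition and is reductive with $(.\,|\,.)$ nondegenerate on it, and that $[e_\alpha,f_\alpha]$ is a nonzero multiple of $t_\alpha$ also when $\alpha$ is an isotropic odd root of $\fg_0$. As an alternative that stays inside the paper's framework, one can instead run the Jacobi identity for $J^{(a)}, J^{(b)}$ (with $a,b \in \fg_0$) and $L$, using (\ref{eq:3.4}) and skewsymmetry of the $\lambda$-bracket; on that route the real obstacle is identifying the $\lambda^2$-coefficient of $[L_\lambda J^{(a)}]$ with $-(\rho_{>0}\,|\,a)$, after which the identity collapses to $0 = -(\lambda+\mu)^2(\rho_{>0}\,|\,[a,b])$ and one concludes $[\rho_{>0},\fg_0]=0$ as before.
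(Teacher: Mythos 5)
Your main argument is correct, and it takes a genuinely different route from the paper. The paper proves the proposition inside the vertex algebra: it applies the Jacobi identity to $J^{(a)}$, $J^{(b)}$ ($a,b\in\fg_0$) and $L$, and the quadratic term in $\lambda$ of $[L_\lambda J^{(a)}]$ (which is $(\rho_{>0}|a)$, not visible in (\ref{eq:3.4}) alone but computed from the $\varphi$-corrections in $J^{(a)}$) forces $0=-(\lambda+\mu)^2(\rho_{>0}|[a,b])$; then invariance and nondegeneracy of $(.\,|\,.)$ on $\fg_0$ give $[\rho_{>0},\fg_0]=0$. Your proof is the classical shadow of this: the identity $\str_{\fg_{>0}}\ad v=2(\rho_{>0}|v)$ is exactly the paper's (\ref{eq:4.6}) in Lemma \ref{lem:4.2}, the vanishing of a supertrace on supercommutators gives $(\rho_{>0}|[\fg_0,\fg_0])=0$ directly, and you finish either through $t_\alpha\in[\fg_0,\fg_0]$ or (slightly more economically, as the paper does) by invariance plus nondegeneracy of $(.\,|\,.)$ on $\fg_0$, which holds since $(\fg_i|\fg_j)=0$ for $i+j\neq 0$. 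What your route buys: it needs neither condition (\ref{eq:1.5}) (required in the paper only so that $L^{\ne}$, hence $L$, is defined) nor the implicit choice of some $k\neq -h^\vee$, and it avoids the $\lambda$-bracket computation entirely; what the paper's route buys is that it stays within machinery already set up for Section 4 and simultaneously records the $\lambda^2$-coefficient of $[L_\lambda J^{(v)}]$, which is reused later (cf. (\ref{eq:7.15})). The minor bookkeeping points you flag are all fine: root spaces of $\fg_0=\fg^x$ pair nondegenerately $\fg_\alpha\times\fg_{-\alpha}\to\FF$, and $[e_\alpha,f_\alpha]=(e_\alpha|f_\alpha)t_\alpha$ holds for isotropic odd roots as well, by the same invariance computation $(h|[e_\alpha,f_\alpha])=\alpha(h)(e_\alpha|f_\alpha)$.
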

\begin{theorem}
    \label{th:3.1}
    For
    the datum $(\fg,x,f,k)$, satisfying (\ref{eq:1.5}),
    the following element of $\C^{k} (\fg ,x, f)$ is $d_{(0)}$-closed : 
\begin{displaymath}
J^{\{ f \}} = J^{(f)} + \underset{j \in S_{1/2}}{\sum} (-1)^{p (j)} : \Phi^{j} J^{([f , u_j])} : - \frac{1}{2} \underset{j \in S_0}{\sum} : J^{(u^j)} J^{(u_j)} :
\end{displaymath}
\begin{displaymath}
- (k+h^\vee)\partial J^{(x)}+\partial J^{(\rho_{> 0} )} + \frac{k+h^\vee}{2} \underset{j \in S_{1/2}}{\sum} \ :\Phi^{j} \ \partial \Phi_j: .
\end{displaymath}
\end{theorem}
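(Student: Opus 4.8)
The strategy is to verify directly that $d_{(0)}J^{\{f\}}=0$ by applying $d_{(0)}$ to each of the five terms in the proposed formula and checking that everything cancels. The key input for the first term is formula (\ref{eq:2.13}) specialized to $v=f\in\fg_{-1}$: since $[f,u_j]\in\fg_{\leq 0}$ for all $j\in S_{>0}$ with $[x,u_j]\leq 1$, and since $p_{1/2}[f,u_j]\ne 0$ only when $u_j\in\fg_{3/2}$ while $[f,u_j]$ itself contributes a $J^{([f,u_j])}$-term exactly when $u_j\in\fg_{\leq 1}$, the expression $d_{(0)}J^{(f)}$ breaks into a linear term in $\varphi^j$ (coefficient $([f,f]|u_j)=0$, so this vanishes), a $\varphi^j\Phi$-term over $S_{3/2}$, a $\varphi^j J^{([f,u_j])}$-term over $S_{1/2}\cup S_1$, and a $\partial\varphi^j$-term with coefficient $k(f|u_j)+\kappa_{>0}(f,u_j)$. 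One then computes $d_{(0)}$ of the three "correction" cocycle-type terms using the already-established facts $d_{(0)}\Phi^j=\varphi^j$, the Leibniz rule, formula (\ref{eq:2.6}) for $d_{(0)}J^{(v)}$ via (\ref{eq:2.13}), and $d_{(0)}\partial=\partial d_{(0)}$; the last term $\tfrac{k+h^\vee}{2}\sum:\Phi^j\partial\Phi_j:$ produces a $\partial\varphi^j$-type contribution that is designed to kill the residual derivative terms.

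Concretely, I would organize the bookkeeping by the weight (eigenvalue of $\ad x$) of the index $j$. Applying $d_{(0)}$ to $\sum_{j\in S_{1/2}}(-1)^{p(j)}:\Phi^jJ^{([f,u_j])}:$ gives, via Leibniz, a term $\sum(-1)^{p(j)}:\varphi^jJ^{([f,u_j])}:$ (from $d_{(0)}\Phi^j=\varphi^j$) that must cancel the corresponding piece of $d_{(0)}J^{(f)}$ coming from $S_{1/2}$, plus a term $\pm:\Phi^j\,d_{(0)}J^{([f,u_j])}:$ where $[f,u_j]\in\fg_0$, so (\ref{eq:2.13}) with $v\in\fg_0$ applies and yields $\varphi^s\Phi$-terms, $\varphi^s J$-terms, and $\partial\varphi^s$-terms. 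Next, $d_{(0)}$ of $-\tfrac12\sum_{j\in S_0}:J^{(u^j)}J^{(u_j)}:$: here $d_{(0)}J^{(v)}$ for $v\in\fg_0$ has no linear $\varphi$-term, only the $\varphi^s\Phi$, $\varphi^s J^{([v,u_s])}$, and $\partial\varphi^s$ pieces; the normal-ordered product of two such, combined with the contraction (Wick) terms, should reproduce precisely the $S_1$-part of $d_{(0)}J^{(f)}$ (the $\varphi^j J^{([f,u_j])}$ over weight-$1$ indices) together with the $\varphi^s\Phi$-contributions from the previous term. Finally the two $\partial J$-terms and the $\Phi\partial\Phi$-term contribute only $\partial\varphi^j$-type expressions; collecting all coefficients of $\partial\varphi^j$ for $j\in S_{1/2},S_1,S_{3/2}$ and using (\ref{eq:2.8a}), $\kappa=2h^\vee(\cdot|\cdot)$, the definition of $h^\vee$ via $\sum u^ju_j$, and Proposition \ref{prop:3.2} (so that $\rho_{>0}$ is central, making $J^{(\rho_{>0})}$ well-behaved), one checks the total is zero.

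\textbf{Main obstacle.} The hard part is the term $-\tfrac12\sum_{j\in S_0}:J^{(u^j)}J^{(u_j)}:$. Differentiating a quadratic normal-ordered expression produces not only the naive Leibniz terms but also quantum corrections from the non-commutativity of the $\lambda$-bracket (the $:AB:$ versus $:BA:$ discrepancy and the integral term $\int_{-\partial}^0[A_\lambda B]d\lambda$), and $d_{(0)}J^{(u_j)}$ contains the trilinear fermionic term $\sum:\varphi^s\varphi^tJ^{([\,[u_j,u_s],\cdots])}$-type contributions hidden inside the $\varphi^s J^{([u_j,u_s])}$-pieces when $[u_j,u_s]\in\fg_{<0}$. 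Tracking the combinatorial factors ($\tfrac12$, signs $(-1)^{p(j)}$, the structure constants $c^i_{j,s}$) so that the $\fg_1$-contribution to $d_{(0)}J^{(f)}$ is matched exactly — including the term $\kappa_{>0}(f,u_j)$, which is where the dual Coxeter number first enters through a trace over $\fg_{>0}$ — will require careful use of the invariance of $(.|.)$ and of the identity $\sum_{j\in S_0}(-1)^{p(j)}[u^j,[u_j,v]]=(\Omega_0+\text{correction})v$, i.e. Proposition \ref{prop:lambda} guaranteeing diagonalizability. I expect the bulk of the genuine work to be this single matching of quadratic and trilinear terms, with the derivative-term cancellation being comparatively mechanical once (\ref{eq:2.8a}) and (\ref{eq:3.1}) are invoked.
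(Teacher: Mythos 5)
Your overall strategy coincides with the paper's: apply $d_{(0)}$ to each summand, expand via (\ref{eq:2.13}) and (\ref{eq:2.6}), organize the bookkeeping by the $\ad x$-weight of the summation index, and match cancellations exactly as you describe (the $S_{1/2}$-part of $d_{(0)}J^{(f)}$ against the Leibniz term coming from $d_{(0)}\Phi^j=\varphi^j$ in the second summand, the $S_1$-part and the $\varphi\Phi$-terms against the quadratic term, the residual $\partial\varphi^j$-terms against the last three summands). You also correctly flag the quasiassociativity and non-commutative Wick corrections arising from $-\tfrac12\sum_{j\in S_0}:J^{(u^j)}J^{(u_j)}:$ as the main computational burden; the paper handles exactly this in its Lemmas \ref{lem:4.1}--\ref{lem:4.4}.

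There is, however, one missing idea, and it sits precisely in the step you dismiss as ``comparatively mechanical''. After all the matchings you describe, the surviving contributions in the $S_{1/2}$ sector involve, on one side, terms $:\Phi_{\Omega_0 u_i}\,\partial\varphi^i:$ and $:(\partial\Phi_{\Omega_0 u_i})\,\varphi^i:$ produced by the contraction $\sum_{k\in S_0}[u^k,[u_k,\cdot\,]]$ inside the quadratic term and inside $II$, and, on the other side, terms built from $[\rho_{>0},u_i]$ coming from $\partial J^{(\rho_{>0})}$. These do not cancel using only $\kappa=2h^\vee(\cdot|\cdot)$, formula (\ref{eq:3.1}) and Proposition \ref{prop:3.2}; one needs the identity
\begin{displaymath}
\Omega_0(u)\ =\ 2\,[\rho_{>0},u]\qquad\hbox{for all}\quad u\in\fg_{1/2},
\end{displaymath}
which is Lemma \ref{lem:4.5} of the paper and is not among the facts you invoke. (The analogous cancellation in the $S_1$ sector does follow from the partial-Killing-form identities $\kappa_0(f,u_j)=(f|\Omega_0 u_j)$ and $\str_{\fg_{>0}}\ad v=2(\rho_{>0}|v)$, but the $S_{1/2}$ statement is an operator identity on all of $\fg_{1/2}$ and is genuinely stronger.) The paper does not prove it by direct computation: it observes that the partially reduced expression $I_D+II_E+III_C+III_D$ is itself $d_{(0)}$-exact, hence annihilated by $d_{(0)}$, and the resulting equation forces $\Omega_0-2\,\ad\rho_{>0}$ to vanish on $\fg_{1/2}$ by non-degeneracy of the form $\langle\cdot,\cdot\rangle^{\ne}$. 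If you carry out your plan you will either have to discover this bootstrap or supply an independent proof of the identity; without it the final cancellation does not close.
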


\begin{theorem} 
    \label{th:3.2}
    Assuming that
    the datum $(\fg,x,f,k)$ satisfies (\ref{eq:1.5}),
    and that $k \neq -h^\vee$, the element $L + \frac{1}{k + h^\vee}J^{\{f\}}$ of $\C (\fg ,x,f)$ is $d_{(0)}$-exact. Consequently $L=-\frac{1}{k+h^\vee}J^{\{f\}}$ in $W^k(\fg, x,f)$.
\end{theorem}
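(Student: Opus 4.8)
The strategy is to combine Theorem \ref{th:3.1} with the known Virasoro element $L$ from \eqref{eq:3.2}, and show that their sum (up to the scalar $\frac{1}{k+h^\vee}$) differs from an explicitly written element $M$ of $\C^k(\fg,x,f)$ by $d_{(0)}(M)$. The natural candidate for the primitive $M$ is a sum of the shape $\sum_{j\in S_{>0}}(\text{stuff})_j\,\varphi_j$ together with fermionic corrections, since applying $d_{(0)}$ to $\varphi_j$ produces, via \eqref{eq:2.6}, terms of the form $p_{>0}(u^j)+(u^j|f)+(-1)^{p(j)}\Phi_{u^j}+\cdots$; modulo the ideal generated by the $\varphi^i$ these reproduce affine currents $J^{(\cdot)}$ and neutral fermions $\Phi$. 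Concretely, I would first write out $L+\frac{1}{k+h^\vee}J^{\{f\}}$ as a sum of normally ordered monomials in $J^{(v)}$ ($v\in\fg_{\le 0}$), the $\varphi^i,\varphi_i$, the $\Phi^j,\Phi_j$, and derivatives; the $L^\fg$ term $\frac{1}{2(k+h^\vee)}\sum_j{:}u^ju_j{:}$ must be re-expressed using $u_j=J^{(u_j)}-\sum_{s}(-1)^{p(s)}{:}\varphi_{[u_j,u_s]}\varphi^s{:}$, which is where most of the bookkeeping lives.

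The key steps, in order, are: (1) Substitute \eqref{eq:2.7} to trade bare Lie-algebra generators for $J^{(\cdot)}$'s throughout $L$, carefully tracking the $\varphi\varphi^\cdot$ tails and the quadratic-Casimir rearrangements (this is where $h^\vee$ enters, via \eqref{eq:2.8a} and \eqref{eq:3.1}); (2) compute $d_{(0)}J^{\{f\}}=0$ is already given, so instead compute $d_{(0)}$ of the proposed primitive $M=\sum_{j\in S_{>0}}\lambda_j\,{:}(\text{affine/fermionic factor})\,\varphi_j{:}+(\text{pure }\Phi,\partial\text{ terms})$ using the Leibniz rule for $d_{(0)}$ and formulas \eqref{eq:2.6}; (3) match coefficients. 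Since both $L+\frac{1}{k+h^\vee}J^{\{f\}}$ and $d_{(0)}M$ are closed (the former because $d_{(0)}L=0$ and $d_{(0)}J^{\{f\}}=0$, the latter automatically), it suffices to check equality of the two in the quotient $\C^k_-(\fg,x,f)/d_{(0)}\C^k_-(\fg,x,f)$; by \eqref{eq:2.8} this quotient is detected on $\overline{\C}^k(\fg,x,f)$, i.e. after setting all $\varphi^i\mapsto 0$. So the final and decisive step is: set $\varphi^i=0$ in $L+\frac{1}{k+h^\vee}J^{\{f\}}$ and in (a suitable reorganization of) \emph{both} sides, and verify the resulting identity in $\overline{\C}^k(\fg,x,f)\cong V^{k'}(\fg_0)\otimes F^{\ne}$ — at which point the statement ``$L=-\frac{1}{k+h^\vee}J^{\{f\}}$ in $W^k(\fg,x,f)$'' is immediate from \eqref{eq:1.6} once we know the difference is $d_{(0)}$-exact.

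The main obstacle will be step (1)–(2): the normal-ordering rearrangements needed to pull the $L^\fg$ term into the $J^{(\cdot)}$ basis generate a cascade of quas-associativity and Wick-type correction terms (the non-commutativity of normal ordering, and the $\partial$-terms coming from the $\lambda$-linear part of \eqref{eq:2.1}), and one has to see that all of these either cancel or reassemble precisely into $-(k+h^\vee)\partial J^{(x)}+\partial J^{(\rho_{>0})}$ and the neutral-fermion piece $\tfrac{k+h^\vee}{2}\sum_j{:}\Phi^j\partial\Phi_j{:}$ appearing in Theorem \ref{th:3.1}. The appearance of $\rho_{>0}$ (which is central in $\fg_0$ by Proposition \ref{prop:3.2}) and of $\partial x$ is exactly the signature that the partial Killing forms $\kappa_{>0},\kappa_{1/2}$ from \eqref{eq:2.9} are conspiring with $h^\vee$; identifying $\sum_{j\in S_{>0}}u^ju_j|_{\fg_0}$-type contractions with $\kappa_{>0}$ and $\rho_{>0}$ is the conceptual heart of the computation. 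A useful sanity check along the way is to specialize to the minimal case and recover \cite{KW}, Theorem 5.1(c), and to check the $L_0$-eigenvalue (conformal weight) bookkeeping: every term in $J^{\{f\}}$ must have conformal weight $2$ with respect to $L$, which pins down the $\partial$-coefficients.
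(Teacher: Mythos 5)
Your overall strategy --- produce an explicit primitive of the shape $\sum_{i\in S_{>0}}:\varphi_i(\cdots):$, rewrite $L$ in terms of the currents $J^{(v)}$ via \eqref{eq:2.7}, and match the result against the expression for $J^{\{f\}}$ from Theorem \ref{th:3.1} --- is exactly the route the paper takes: Lemma \ref{lem:5.5} uses the primitive $\sum_{i\in S_{>0}}(-1)^{p(i)}:\varphi_i u^i:$, Lemma \ref{lem:5.7} adds the further correction $\tfrac12\sum_{i,j}(-1)^{p(j)}:\varphi_{[u_i,u^j]}\varphi_j\varphi^i:$, and the bookkeeping you anticipate (the $\kappa_{>0}$, $\rho_{>0}$, $h^\vee$ conspiracies) is carried out in Lemmas \ref{lem:5.1}--\ref{lem:5.4}. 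However, your final and ``decisive'' step contains a genuine gap. You claim that, both sides being $d_{(0)}$-closed, it suffices to verify the identity after setting $\varphi^i\mapsto 0$, ``by \eqref{eq:2.8}''. Equation \eqref{eq:2.8} says only that $\overline{\C}^k(\fg,x,f)\cap d_{(0)}\C^k_-(\fg,x,f)=0$, i.e.\ that a $d_{(0)}$-exact element \emph{of} $\overline{\C}^k$ vanishes; it does \emph{not} say that the kernel of the specialization $\varphi\mapsto 0$ is contained in $\Im d_{(0)}$, which is what your reduction would require. This matters because after all the cancellations the residual $(k+h^\vee)L+J^{\{f\}}-d_{(0)}M$ is precisely a charged-fermion bilinear of the form $\sum_{i,j}(a_{ij}:(\partial\varphi_i)\varphi^j:+b_{ij}:\varphi_i\partial\varphi^j:)$ (the element $P_2$ of Lemma \ref{lem:5.7}); such a term is annihilated by $\varphi\mapsto 0$ no matter what, so your proposed check would pass vacuously without proving exactness. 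Note also that this residual involves the $\varphi_i$ and hence does not even lie in $\C^k_-(\fg,x,f)$, so the quotient you invoke is not available.

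The missing ingredient is the paper's Lemma \ref{lem:5.8}: a $d_{(0)}$-closed element of that specific bilinear form must be identically zero, because by \eqref{eq:2.6} the differential $d_{(0)}\varphi_i$ produces the terms $u_i+(u_i|f)+(-1)^{p(i)}\Phi_{u_i}$, which cannot be cancelled by the image of any other term; closedness of the residual (which follows from $d_{(0)}L=0=d_{(0)}J^{\{f\}}$) then forces all coefficients $a_{ij},b_{ij}$ to vanish. With that lemma in place of your $\varphi\mapsto 0$ reduction, your plan coincides with the paper's proof. (The deduction of $L=-\frac{1}{k+h^\vee}J^{\{f\}}$ in $W^k(\fg,x,f)$ from exactness is, as you say, immediate.)
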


As an immediate consequence of (\ref{eq:2.10}), (\ref{eq:2.11}),
(\ref{eq:2.14})-(\ref{eq:2.17}) and Theorems
\ref{th:3.1} and \ref{th:3.2}, we obtain the following corollary.
\begin{corollary}
  \label{cor:3.1}
  Provided that the datum $(\fg,x,f,k)$ satisfies (\ref{eq:1.5}) and $k\neq -h^\vee$,
  one has a homomorphism of the vertex algebra $W^k (\fg ,x,f)$
  to the vertex algebra $V^{B_{0}} (\fg_0) \otimes F^{\ne} (\fg_{1/2})$, such that 
\begin{displaymath}
J^{\{ a \}} \ \mapsto \ a \ + \ \frac{(-1)^{p(a)}}{2}
\underset{j \in S_{1/2}}{\sum} :\Phi^{j} \Phi_{[u_j ,a]}:\,\,\,
\hbox{if}\,\,\, a \in \fg^{f}_{0} ,
\end{displaymath}
\begin{displaymath}
  J^{\{ v \}} \ \mapsto \underset{i \in S_{1/2}}{\sum}:[v, u_i]\Phi^i: - \frac {(-1)^{p(v)}}{3}
  \underset{i,j \in S_{1/2}}{\sum}
  : \Phi^i \Phi^j \Phi_{[u_j, [u_i ,v]]}:
  \end{displaymath}  
\begin{displaymath}  
  -\underset{i \in S_{1/2}}{\sum}\big(k(v|u_i)+\kappa_{>0}(v,u_i)\big)\partial\Phi^i
  \,\,\, \hbox{if}\,\,\, v\in \fg^f_{-1/2} ,
\end{displaymath}
\begin{displaymath}
L \ \mapsto \ \frac{1}{k + h^\vee} \big(\frac{1}{2} \underset{j \in S_{0}}{\sum} : u^j u_j : \ + \ \partial ((k + h^\vee) x - \rho_{>0})\big) - \frac{1}{2} \underset{j \in S_{1/2}}{\sum} : \Phi^j \partial \Phi_j: .
\end{displaymath}
\end{corollary}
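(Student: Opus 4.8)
The asserted homomorphism is nothing other than the free field realization map (\ref{eq:1.6}), composed with the identification of $\overline{\C}^k_0(\fg,x,f)$ with $V^{B_0}(\fg_0)\otimes F^{\ne}(\fg_{1/2})$. The plan is therefore not to construct a new map but to evaluate this one on the three families of generators $J^{\{a\}}$ ($a\in\fg^f_0$), $J^{\{v\}}$ ($v\in\fg^f_{-1/2}$) and $L$, using the explicit expressions (\ref{eq:2.14}), (\ref{eq:2.17}) together with Theorems \ref{th:3.1} and \ref{th:3.2}. The key observation is how the map acts on the building blocks $J^{(v)}$: it is the canonical projection $\overline{\C}^k(\fg,x,f)\to\overline{\C}^k_0(\fg,x,f)$ that kills the ideal $U_{<0}$, so that $J^{(v)}\mapsto 0$ whenever $v\in\fg_{<0}$, while $J^{(v)}\mapsto v$ (the corresponding generator of $V^{B_0}(\fg_0)$, by (\ref{eq:2.10})--(\ref{eq:2.11})) when $v\in\fg_0$, and the neutral fermions $\Phi_i,\Phi^i$ are left untouched.

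With this dictionary the first two formulas reduce to bookkeeping of $\ad x$-degrees. For $a\in\fg^f_0$ the term $J^{(a)}$ in (\ref{eq:2.14}) maps to $a$ and the neutral-fermion term is preserved, giving the first displayed image. For $v\in\fg^f_{-1/2}$ I apply the map to (\ref{eq:2.17}): here $J^{(v)}\mapsto 0$ since $v\in\fg_{<0}$; in the bilinear term $:J^{([v,u_i])}\Phi^i:$ one has $[v,u_i]\in\fg_0$ (degree $-\tfrac12+\tfrac12=0$), so $J^{([v,u_i])}\mapsto[v,u_i]$; and the cubic fermion term and the $\partial\Phi^i$ term survive verbatim. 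This produces exactly the second displayed image.

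For $L$ I first invoke Theorem \ref{th:3.2} to replace $L$ by $-\tfrac{1}{k+h^\vee}J^{\{f\}}$ inside $W^k(\fg,x,f)$, and then apply the map to the formula of Theorem \ref{th:3.1}. A degree count gives: $J^{(f)}\mapsto 0$ since $f\in\fg_{-1}$; in the sum $\sum_j(-1)^{p(j)}:\Phi^j J^{([f,u_j])}:$ one has $[f,u_j]\in\fg_{-1/2}\subset\fg_{<0}$, so $J^{([f,u_j])}\mapsto 0$ and the whole sum vanishes; the quadratic term $-\tfrac12\sum_j:J^{(u^j)}J^{(u_j)}:$ becomes $-\tfrac12\sum_j:u^j u_j:$ because $u_j,u^j\in\fg_0$; $J^{(x)}\mapsto x$ and $J^{(\rho_{>0})}\mapsto\rho_{>0}$ (both lie in $\fg_0$); and the neutral-fermion term is preserved. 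Multiplying the resulting image of $J^{\{f\}}$ by $-\tfrac{1}{k+h^\vee}$ and collecting the two derivative contributions into $\partial((k+h^\vee)x-\rho_{>0})$ yields the stated image of $L$.

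The one point that is not mere bookkeeping is the well-definedness of the map on homology, namely that a $d_{(0)}$-exact element lying in $\overline{\C}^k(\fg,x,f)$ maps to $0$ in $\overline{\C}^k_0(\fg,x,f)$; this is precisely what (\ref{eq:2.8}) guarantees, and it is what makes (\ref{eq:1.6}) a vertex algebra homomorphism. Since the substantive work has already been carried out in establishing Theorems \ref{th:3.1} and \ref{th:3.2}, the corollary itself presents no obstacle beyond carefully tracking which building blocks $J^{(v)}$ sit in $\fg_{<0}$ (and hence die) and which sit in $\fg_0$ (and hence become affine generators of $V^{B_0}(\fg_0)$).
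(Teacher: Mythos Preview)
Your proposal is correct and follows exactly the approach the paper intends: the paper states just before the corollary that it is ``an immediate consequence of (\ref{eq:2.10}), (\ref{eq:2.11}), (\ref{eq:2.14})--(\ref{eq:2.17}) and Theorems \ref{th:3.1} and \ref{th:3.2}'', and you have spelled out precisely this derivation---applying the projection $\overline{\C}^k(\fg,x,f)\to\overline{\C}^k_0(\fg,x,f)\cong V^{B_0}(\fg_0)\otimes F^{\ne}$ to the explicit formulas, using that $J^{(v)}\mapsto 0$ for $v\in\fg_{<0}$ and $J^{(v)}\mapsto v$ for $v\in\fg_0$. Your degree bookkeeping for each term is accurate, including the replacement of $L$ by $-\tfrac{1}{k+h^\vee}J^{\{f\}}$ via Theorem \ref{th:3.2} before projecting.
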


\section{ { Proof of Theorem 3.1}}
\label{sec:4}

Let $U$ and $V$ be finite-dimensional vector spaces over $ \mathbb{F} $ with a non-degenerate even pairing $ <.,.> : U \times V \xrightarrow{} \mathbb{F}$.  Choose dual bases $ \{ u_i \}_{i \in I}$ and $ \{ u^i \}_{i \in I} $ of $U$ and $V$ respectively, i.e. $ < u_i , u^j > = \delta_{ i , j} $. Then for any $ A \in \End U$ and $B \in \End V$ we have :
\begin{equation}
  \label{eq:4.1}
     \str_{U} A = \underset{i \in I}{\sum} (-1)^{p(i)} < A u_i , u^{i} > ,\,\, \str_{V} B = \underset{i \in I}{\sum} (-1)^{p(i)} < u_i , B u^i>,
\end{equation}
where, as before, $p(i)$ stands for $p(u_i) (=p(u^i))$.
This will be used to prove the following lemma.
\begin{lemma}
  \label{lem:4.1} For $u, v \in \fg$ we have
  \begin{equation}
    \label{eq:4.2}
    \kappa_{< 0}  (u,v)  =   \kappa_{> 0} (u,v) - \str_{\fg {> 0}}\, p_{> 0}
    \ad [u,v],
\end{equation}
  \begin{equation}
    \label{eq:4.3}
    \kappa_{> 0}  (u,v)  =   \frac{1}{2} \big(\kappa (u,v) - \kappa_{0} (u,v) \ + \ \str_{\fg > 0} \,p_{> 0} \ad [u,v]\big),
\end{equation}
  \begin{equation}
    \label{eq:4.4}
    \kappa_{ 0}  (u,v)  =   (\Omega_0 u  |  v) = (u  |  \Omega_0  v).
\end{equation}
\end{lemma}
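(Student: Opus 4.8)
\emph{Proof plan.} The three formulas have different characters: $(\ref{eq:4.4})$ is a direct computation with the invariant form, $(\ref{eq:4.2})$ carries the real content, and $(\ref{eq:4.3})$ will follow formally from $(\ref{eq:4.2})$ and the partition $p_{>0}+p_0+p_{<0}=\mathrm{id}$. I plan to use $(\ref{eq:4.1})$ throughout, together with three elementary facts coming from the orthogonality $(\fg_i\,|\,\fg_j)=0$ for $i+j\neq0$: the $(.\,|\,.)$-transpose of $p_j$ is $p_{-j}$, so $p_{<0}^{*}=p_{>0}$; if an operator $T$ maps $\fg$ into $\fg_{>0}$ (resp.\ $\fg_0$) then $\str_{\fg}T=\str_{\fg_{>0}}(T|_{\fg_{>0}})$ (resp.\ $\str_{\fg_0}(T|_{\fg_0})$), since in a homogeneous basis the diagonal entries of $T$ indexed by $\fg_{\le0}$ (resp.\ $\fg_{\neq0}$) vanish; and $(.\,|\,.)$-skew-adjointness of each $\ad a$, i.e.\ $(\ad a)^{*}=-\ad a$, which is just invariance. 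Beyond these I use $(AB)^{*}=(-1)^{p(A)p(B)}B^{*}A^{*}$, cyclicity of the supertrace, and that an even operator has the same supertrace as its transpose.

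Granting $(\ref{eq:4.2})$, formula $(\ref{eq:4.3})$ will drop out: $\kappa=\kappa_{>0}+\kappa_0+\kappa_{<0}$, so plugging in $\kappa_{<0}=\kappa_{>0}-\str_{\fg_{>0}}p_{>0}\ad[u,v]$ and solving for $\kappa_{>0}$ gives $(\ref{eq:4.3})$. For $(\ref{eq:4.2})$ I would reduce by bilinearity to $u,v$ homogeneous; if $p(u)\neq p(v)$ all three terms are supertraces of odd operators, hence $0$, so one may assume $p(u)=p(v)$ and set $X=(\ad u)(\ad v)$, which is then even. From $(\ad u)^{*}=-\ad u$, the rule for $(AB)^{*}$, and $(\ad u)(\ad v)-(-1)^{p(u)p(v)}(\ad v)(\ad u)=\ad[u,v]$ one obtains, after a short sign check, $X^{*}=X-\ad[u,v]$. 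Since $p_{<0}X$ is even and $p_{<0}^{*}=p_{>0}$,
\[
\kappa_{<0}(u,v)=\str_{\fg}(p_{<0}X)=\str_{\fg}\!\big((p_{<0}X)^{*}\big)=\str_{\fg}(X^{*}p_{>0})=\str_{\fg}(Xp_{>0})-\str_{\fg}(\ad[u,v]\,p_{>0}),
\]
and by cyclicity $\str_{\fg}(Xp_{>0})=\str_{\fg}(p_{>0}X)=\kappa_{>0}(u,v)$, whereas $\str_{\fg}(\ad[u,v]\,p_{>0})=\str_{\fg}(p_{>0}\ad[u,v])=\str_{\fg_{>0}}(p_{>0}\ad[u,v])$ since $p_{>0}\ad[u,v]$ lands in $\fg_{>0}$. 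This is $(\ref{eq:4.2})$.

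For $(\ref{eq:4.4})$, $p_0(\ad u)(\ad v)$ lands in $\fg_0$, so $\kappa_0(u,v)=\str_{\fg_0}\!\big(p_0(\ad u)(\ad v)|_{\fg_0}\big)$; expanding by $(\ref{eq:4.1})$ over $\{u_i\}_{i\in S_0}$ and using $(\fg_0\,|\,\fg_j)=0$ for $j\neq0$ gives $\kappa_0(u,v)=\sum_{i\in S_0}(-1)^{p(i)}([u,[v,u_i]]\,|\,u^i)$. Pulling $u$ out by invariance and then using graded antisymmetry of the bracket in the form $[[v,u_i],u^i]=(-1)^{p(i)}[u^i,[u_i,v]]$, the sum collapses to $(u\,|\,\Omega_0 v)$; and $(u\,|\,\Omega_0 v)=(\Omega_0 u\,|\,v)$ is the self-adjointness of $\Omega_0$ with respect to $(.\,|\,.)$, which comes from skew-adjointness of $\ad$ applied twice plus the symmetry $\sum_{j\in S_0}u^j\otimes u_j=\sum_{j\in S_0}(-1)^{p(j)}u_j\otimes u^j$ of the $\fg_0$-Casimir tensor. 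I do not expect a genuine obstacle here; the part needing the most care will be the parity bookkeeping in the identities $X^{*}=X-\ad[u,v]$, $[[v,u_i],u^i]=(-1)^{p(i)}[u^i,[u_i,v]]$, and in the self-adjointness of $\Omega_0$ — each a short, mechanical computation, made innocuous by the fact that nonvanishing terms force $p(u)=p(v)$ and $p(u_i)=p(u^i)$.
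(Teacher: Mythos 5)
Your proof is correct and is essentially the paper's own argument in basis-free clothing: the paper transposes $(\ad u)(\ad v)$ across the form $(.|.)$ using explicit dual bases of $\fg_{>0}$ and $\fg_{<0}$ via (\ref{eq:4.1}), which is exactly your identity $X^{*}=X-\ad[u,v]$ combined with $p_{<0}^{*}=p_{>0}$, and it likewise obtains (\ref{eq:4.3}) from (\ref{eq:4.2}) and $\kappa=\kappa_{>0}+\kappa_0+\kappa_{<0}$, and (\ref{eq:4.4}) by the same computation over $S_0$. The only cosmetic difference is that the paper first reduces to the case where $(\ad u)(\ad v)$ preserves the grading, a step your adjoint/cyclicity formulation renders unnecessary.
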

\vspace{4mm}
\begin{proof} We may assume that $(\ad u)(\ad v)$ preserves the $\frac{1}{2} Z $-grading (\ref{eq:1.1}) and that $p(u) = p(v)$.
In order to prove (\ref{eq:4.2}), we use (\ref{eq:4.1}) with 
\begin{displaymath}
    U \ = \ \fg_{> 0} ,\, V = \fg_{< 0} ,\, <.,.> = ( .|. ),\, A = (\ad v)(\ad u), \,B = (\ad u)(\ad v).
\end{displaymath}
We have by the second and then the first formula in (\ref{eq:4.1}):
\begin{displaymath}
    \kappa_{< 0}(u,v) \ = \ \str_{\fg_{< 0}} B = \underset{i \in S_{>0}}{\sum} (-1)^{p(i)} (u_i | [u, [v,u^i] ]) \ = \ \underset{i \in S_{> 0}}{\sum} (-1)^{p(i)} ([ [u_i , u], v ] | u^i )
\end{displaymath}
\begin{displaymath}
  = \ (-1)^{p(u) p(v)} \underset{i \in S_{> 0}}{\sum} (-1)^{p(i)} \
  ([v,  [u, u_i]]  | u^{i}) = (-1)^{p(u) p(v)} \str_{\fg_{> 0}} \ A
\end{displaymath}
\begin{displaymath}
  = \ (-1)^{p(u)p(v)} \str_{\fg_{> 0}} \ \ad [v,u] \ + \ \str_{\fg_{> 0}} \
  (\ad u )( \ad v) = - \str_{\fg_{> 0}} \ad [u,v] \ + \ \kappa_{> 0} (u,v).
\end{displaymath}
Formula (\ref{eq:4.3}) follows from (\ref{eq:4.2}) since 
$\kappa (u,v) =  \kappa_{> 0} (u,v)  +  \kappa_0 (u,v) +
\kappa_{< 0} (u,v)$.
The proof of (\ref{eq:4.4}) is similar, by letting $U = V = \fg_0$, $A = (\ad u)(\ad v)$.
\end{proof}

\begin{lemma}
  \label{lem:4.2} Let, as before, $\{ u_i \}_{i \in S_{> 0}}$ be a basis of
  $\fg_{> 0}$ and $\{ u^i \}_{i \in S_{> 0}}$ the dual basis of $\fg_{< 0}$,
  i.e. $(u_i | u^j) = \delta_{i,j}$, and let $v \in \fg_0$. Then
  \begin{equation}
  \label{eq:4.5}
      \underset{i \in S_{> 0}}{\sum} (-1)^{p(i)} [u_i , u^i]  =  2 \rho_{> 0}, 
  \end{equation}
    \begin{equation}
    \label{eq:4.6}
      \str_{\fg_{> 0}} \ad v  =  2 (\rho _{>0}  |  v).
  \end{equation}
    \begin{proof} Since the LHS is independent on the choice of dual bases, we may take for $\{ u_i \}_{i\in S_{>0}}$ the basis
      $\{ e_{\alpha} \}_{\alpha \in \Delta_{> 0}}$
      of $\fg_{>0}$, so that the dual basis of $\fg_{<0}$ is
$\{ e_{-\alpha} \}_{\alpha \in \Delta_{> 0}}$
      with $ (e_{\alpha}  |  e_{- \alpha}) = 1 $, and hence $[e_{\alpha} , e_{- \alpha} ] = \alpha$. Then (\ref{eq:4.5}) follows.

For (\ref{eq:4.6}) we have :
\begin{displaymath}
  \str_{\fg_{> 0}} \ \ad v \ = \ \underset{i \in S_{> 0}}{\sum} (-1)^{p(i)} ([v , u_i]  |  u^i) \  = \ \underset{i \in S_{> 0}}{\sum} (-1)^{p(i)}
  ( v  |  [u_i , u^i] )  =  2 (v  |  \rho_{> 0})
\end{displaymath}
by (\ref{eq:4.5}).
\end{proof}
\end{lemma}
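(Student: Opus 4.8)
The plan is to establish (\ref{eq:4.5}) first and then deduce (\ref{eq:4.6}) from it. For (\ref{eq:4.5}), the key preliminary observation is that the element $\sum_{i\in S_{>0}}(-1)^{p(i)}[u_i,u^i]$ is independent of the choice of the dual bases $\{u_i\}$ of $\fg_{>0}$ and $\{u^i\}$ of $\fg_{<0}$. Indeed, passing to another homogeneous basis $\tilde u_i=\sum_j c_{ji}u_j$ forces the dual basis to transform by the inverse transpose, $\tilde u^i=\sum_k d_{ki}u^k$ with $CD^T=I$ (where $C=(c_{ji})$, $D=(d_{ki})$); since the transition matrices are parity-preserving, the factors $(-1)^{p(i)}$ can be pulled through and the two transformations cancel, giving $\sum_i(-1)^{p(i)}[\tilde u_i,\tilde u^i]=\sum_j(-1)^{p(j)}[u_j,u^j]$. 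Conceptually, this sum is just the image of the canonical element of the pairing between $\fg_{>0}$ and $\fg_{<0}$ under the bracket map $\fg_{>0}\otimes\fg_{<0}\to\fg_0$, which is manifestly basis-free.

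With basis-independence in hand, I would evaluate the sum in a root basis. Choosing the Cartan subalgebra $\fh$ and a set of positive roots compatible with the grading (\ref{eq:1.1}), each root space of the basic Lie superalgebra $\fg$ is one-dimensional, so $\fg_{>0}=\bigoplus_{\alpha\in\Delta_{>0}}\fg_\alpha$, and I take $u_i=e_\alpha$ with dual vector $u^i=e_{-\alpha}$ normalized so that $(e_\alpha|e_{-\alpha})=1$. Invariance of $(.|.)$ yields $([e_\alpha,e_{-\alpha}]|h)=(e_\alpha|[e_{-\alpha},h])=\alpha(h)$ for all $h\in\fh$, hence $[e_\alpha,e_{-\alpha}]=\alpha$ under the identification $\fh\cong\fh^*$ induced by $(.|.)$. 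Therefore $\sum_{i\in S_{>0}}(-1)^{p(i)}[u_i,u^i]=\sum_{\alpha\in\Delta_{>0}}(-1)^{p(\alpha)}\alpha$, which is precisely the sum of the positive even roots in $\fg_{>0}$ minus the sum of the positive odd roots in $\fg_{>0}$, i.e. $2\rho_{>0}$ by the definition of $\rho_{>0}$.

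For (\ref{eq:4.6}) I would apply the supertrace formula (\ref{eq:4.1}) with $U=\fg_{>0}$, $V=\fg_{<0}$, pairing $(.|.)$ and $A=\ad v$, obtaining $\str_{\fg_{>0}}\ad v=\sum_{i\in S_{>0}}(-1)^{p(i)}([v,u_i]|u^i)$. Invariance of the form rewrites each summand as $([v,u_i]|u^i)=(v|[u_i,u^i])$, so that, pulling $v$ out, $\str_{\fg_{>0}}\ad v=(v\,|\,\sum_{i\in S_{>0}}(-1)^{p(i)}[u_i,u^i])=(v|2\rho_{>0})=2(\rho_{>0}|v)$, where the penultimate equality is (\ref{eq:4.5}).

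The main obstacle I anticipate is the bookkeeping of the super-signs in the basis-independence step: one must check that the factors $(-1)^{p(i)}$ interact correctly with a parity-preserving change of basis, so that the transition matrix and its inverse transpose cancel. Once this is secured, everything else is a short and direct computation using the root-space description together with the invariance of $(.|.)$; the only other point deserving care is that the normalization $(e_\alpha|e_{-\alpha})=1$ and the resulting identification $[e_\alpha,e_{-\alpha}]=\alpha\in\fh\cong\fh^*$ are consistent with the convention defining $\rho_{>0}$.
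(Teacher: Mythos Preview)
Your proof is correct and follows essentially the same approach as the paper: first observe basis-independence of the sum in (\ref{eq:4.5}), evaluate it in a root basis using $[e_\alpha,e_{-\alpha}]=\alpha$, and then deduce (\ref{eq:4.6}) from (\ref{eq:4.5}) via the supertrace formula and invariance of $(.|.)$. The paper simply asserts basis-independence without the transition-matrix justification you supply, but otherwise the arguments are identical.
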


Denote by I, II, ..., IV the operator $d_{(0)}$, applied to each of the six terms in the RHS of the formula for $J^{\{ f \}}$ in Theorem 3.1.  We have to prove that the sum of these six elements of $\C^{k} (\fg ,x, f)$ is equal to $0$.

By formula (\ref{eq:2.13}) for $v = f$, element I is equal to the sum of the following four elements :
\begin{equation}
\label{eq:4.7}
  I_{A} = \underset{j\in S_{3/2}}{\sum} : \varphi^{j} \Phi_{[f ,u_{i}]} : ,\,\,
  I_{B} = - \underset{j \in S_{1}}{\sum} (-1)^{p(j)} : \varphi^{j} J^{([f ,u_{j}])}:,
\end{equation}
\begin{equation}
  \label{eq:4.8}
  I_{C} = - \underset{j \in S_{1/2}}{\sum} (-1)^{p(j)} : \varphi^{j} J^{([f, u_{j}])} : , \,\,I_{D} = \underset{j \in S_{1}}{\sum} \big(k (f|u_{j}) +
    \kappa_{>0}(f,u_j)\big)\partial \varphi^j.
\end{equation}
By (\ref{eq:2.13}) for $v = f$ and the last formula in (\ref{eq:2.6}), using that $d_{(0)}$ is an odd derivation of the vertex algebra $\C^{k}(\fg ,x, f)$, one obtains that element II is equal to the sum of the following five elements:
\begin{equation}
  \label{eq:4.9}
  II_{A} = \underset{j \in S_{1/2}}{\sum} (-1)^{p(j)}
    : \varphi^{j} J^{([f ,u_j])} :, \,\, II_{B} =
  \underset{j \in S_{1/2}}{\sum}\ \underset{i \in S_{3/2}}{\sum}
    (f | [[f ,u_{j}] , u_{i}]) : \Phi^{j} \varphi^{i} :,
\end{equation}
\begin{equation}
  \label{eq:4.10}
    II_{C} = \underset{j \in S_{1/2}}{\sum} \ \underset{i \in S_{1}}{\sum} (-1)^{p(j) (p(i) + 1)} : \Phi^{j} \varphi^{i} \Phi_{[[f, u_{j}] , u_{i}]} : , 
\end{equation}
\begin{equation}
  \label{eq:4.11}
    II_{D} = - \underset{i,j \in S_{1/2}}{\sum} (-1)^{p(i)(p(j)+1)} : \Phi^{j} \varphi^{i} J^{([[f,u_{j}],u_{i}])} : ,
\end{equation}
\begin{equation}
  \label{eq:4.12}
    II_{E} = \underset{i,j \in S_{1/2}}{\sum} \big(k ([f,u_{j}]|u_{i}) + \kappa_{> 0} ([f,u_{j}],u_{i})\big) : \Phi^{j} \partial \varphi^{i} :. 
\end{equation}
It is easy to see that $I_{A} + II_{B} = 0$ , and since also $I_{C} + II_{A} = 0$ , we obtain
\begin{equation}
  \label{eq:4.13}
    I + II = I_{B} + I_{D} + II_{C} + II_{D} + II_{E}.
\end{equation}


\begin{lemma}
  \label{lem:4.3}. One has \\
  \\
  $(a) II_{C} = 0.$ \\
  \\
  $(b) II_{D} = \underset{i \in S_{1/2}}{\sum} \ \underset{k \in S_{0}}{\sum} : \Phi_{[u^{k} , u_{i}]} \varphi^{i} \ J^{(u_{k})} : .$
  \begin{proof}
    By (\ref{eq:4.10}), using that
\begin{displaymath}
  \Phi_{[[f,u_{j}],u_{k}]} = \underset{i \in S_{1/2}}{\sum} \langle u_{i} , [[f,u_{j}],u_{k}] \rangle^{\ne}  \Phi^{i} ,
\end{displaymath}
we obtain :
\begin{displaymath}
  II_{C} = \underset{i,j \in S_{1/2}}{\sum} \ \underset{k \in S_{1}}{\sum} (-1)^{p(j)(p(k) + 1)} \langle u_{i}, [[f,u_{j}],u_{k}] \rangle^{\ne}
  : \Phi^{j}\varphi^k\Phi^i: 
\end{displaymath}
\begin{displaymath}
  = \underset{i,j \in S_{1/2}}{\sum} \underset{k \in S_{1}}{\sum} (-1)^{(p(i) + p(j)(p(k)+1)} \langle u_{i}, [[f,u_{j}],u_{k}] \rangle^{\ne} : \Phi^{j} \Phi^{i} \varphi^k:
\end{displaymath}
\begin{displaymath}
  = \underset{i,j \in S_{1/2}}{\sum} \ \underset{k \in S_{1}}{\sum} \ ([[f,u_{i}] , [f,u_{j}]]  |  u_{k}) : \Phi^{j} \Phi^{i} \varphi^{k} : .
\end{displaymath}
If one exchanges $i$ and $j$ in the summation of the last expression, $II_{C}$ doesn't change.  On the other hand, looking at each summand in this expression, we see that it changes the sign, hence $II_{C} = - II_{C}$, proving (a).

By (\ref{eq:4.11}), using that, for $i , j \in S_{1/2}$, 
\begin{displaymath}
  [[f,u_{j}],u_{i}] = \underset{k \in S_{0}}{\sum} ([[f,u_{j}],u_{i}] | u^{k}) u_{k} ,
\end{displaymath}
we obtain :
\begin{displaymath}
  II_{D} = - \underset{i \in S_{1/2}}{\sum} \ \underset{k \in S_{0}}{\sum} (-1)^{p(i)p(k)} : \Phi_{[u_{i},u^{k}]} \varphi^{i} J^{(u_{k})} : ,
\end{displaymath}
proving (b).
\end{proof}
\end{lemma}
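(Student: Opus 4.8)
The plan is to evaluate directly the two normally ordered expressions $II_C$ and $II_D$ recorded in (\ref{eq:4.10}) and (\ref{eq:4.11}), by expanding the neutral fermion $\Phi_{[[f,u_j],u_i]}$ (resp.\ the affine generator $J^{([[f,u_j],u_i])}$) occurring there in the chosen bases, and then simplifying the resulting scalar coefficients by means of the definition (\ref{eq:1.4}) of $\langle\cdot,\cdot\rangle^{\ne}$ together with the invariance of $(.\,|\,.)$. In both parts the conceptual input is a single (super)symmetry observation; the real work is in the sign bookkeeping.

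For part (a), I would first substitute $\Phi_{[[f,u_j],u_k]}=\sum_{i\in S_{1/2}}\langle u_i,[[f,u_j],u_k]\rangle^{\ne}\,\Phi^i$ into (\ref{eq:4.10}), relabelling so that $i,j$ range over $S_{1/2}$ and $k$ over $S_1$. Since the charged and neutral fermions supercommute, I may move $\varphi^k$ to the right of $\Phi^i$, collecting a Koszul sign; a short check shows that, on the support of the even form $\langle\cdot,\cdot\rangle^{\ne}$ (where $p(k)=p(i)+p(j)$), the total sign collapses to $+1$, so that $II_C=\sum_{i,j,k}\langle u_i,[[f,u_j],u_k]\rangle^{\ne}:\Phi^j\Phi^i\varphi^k:$. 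Next, using (\ref{eq:1.4}) and two applications of invariance, I rewrite the coefficient as $\langle u_i,[[f,u_j],u_k]\rangle^{\ne}=([[f,u_i],[f,u_j]]\,|\,u_k)$. The exchange $i\leftrightarrow j$ leaves the whole sum unchanged, but by the super-antisymmetry of the bracket $[[f,u_i],[f,u_j]]$ together with that of the fermion product $:\Phi^j\Phi^i:$ (the correction term from reordering vanishes, being proportional to $\partial\mathbf 1=0$), each summand changes sign. Hence $II_C=-II_C$, which gives (a).

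For part (b), I would expand $[[f,u_j],u_i]\in\fg_0$ in the basis $\{u_k\}_{k\in S_0}$ as $[[f,u_j],u_i]=\sum_{k\in S_0}([[f,u_j],u_i]\,|\,u^k)\,u_k$ and use linearity of $J^{(\cdot)}$ to pull the scalar out of $J^{([[f,u_j],u_i])}$ in (\ref{eq:4.11}). Two applications of invariance together with (\ref{eq:1.4}) turn the coefficient into $([[f,u_j],u_i]\,|\,u^k)=\langle u_j,[u_i,u^k]\rangle^{\ne}$; the key point is that the prefactor $(-1)^{p(i)(p(j)+1)}$ reduces, on the support $p(j)=p(i)+p(k)$ of this pairing, to the $j$-independent sign $(-1)^{p(i)p(k)}$. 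I may then carry out the sum over $j$, which by non-degeneracy reconstitutes a single neutral fermion, $\sum_{j}\langle u_j,[u_i,u^k]\rangle^{\ne}\Phi^j=\Phi_{[u_i,u^k]}$, yielding $II_D=-\sum_{i\in S_{1/2}}\sum_{k\in S_0}(-1)^{p(i)p(k)}:\Phi_{[u_i,u^k]}\varphi^i J^{(u_k)}:$. A final application of the super-antisymmetry $[u^k,u_i]=-(-1)^{p(i)p(k)}[u_i,u^k]$ converts $-(-1)^{p(i)p(k)}\Phi_{[u_i,u^k]}$ into $\Phi_{[u^k,u_i]}$, which is the claimed formula.

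The main obstacle throughout is the parity/sign bookkeeping rather than any structural difficulty. I expect the two subtlest points to be (i) confirming in part (a) that the reordering sign genuinely collapses to $+1$ on the support of $\langle\cdot,\cdot\rangle^{\ne}$, so that the coefficient $([[f,u_i],[f,u_j]]\,|\,u_k)$ is cleanly antisymmetric against $:\Phi^j\Phi^i\varphi^k:$; and (ii) in part (b), the reduction of the $j$-dependent sign $(-1)^{p(i)(p(j)+1)}$ to the constant $(-1)^{p(i)p(k)}$ on the support of the pairing, which is precisely what allows the sum over $j$ to close up into $\Phi_{[u_i,u^k]}$. Everything else is routine manipulation of normally ordered products and repeated use of the invariance of the two bilinear forms.
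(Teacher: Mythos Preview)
Your proposal is correct and follows essentially the same route as the paper's own proof. In both parts you expand in the dual bases, use invariance of $(\cdot|\cdot)$ and the definition of $\langle\cdot,\cdot\rangle^{\ne}$ to simplify the scalar coefficients, and then invoke the same (anti)symmetry observations; the only difference is that you make the sign collapses explicit (the reductions to $+1$ in (a) and to $(-1)^{p(i)p(k)}$ in (b)), whereas the paper leaves these steps implicit.
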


Next, we treat the term III.  For that introduce structure constants $c^{k}_{ij}$ and $c^{k}_{j} (v)$ for $i,j,k \in S_{> 0}$ and $v \in \fg_{0}$ by 
\begin{displaymath}
[u_{i},u_{j}] = \underset{k}{\sum} c^{k}_{ij} u_{k},\,\, [v,u_{j}] = \underset{k}{\sum} c^{k}_{j} (v) u_{k}.
\end{displaymath}
\begin{lemma}
  \label{lem:4.4} (a) For $v \in \fg_{0}$ and $k \in S_{>0}$ one has :
 \begin{equation}
\label{eq:4.14}
   [{\varphi^{k}}_{\lambda} J^{(v)}] = \underset{j \in S_{> 0}}{\sum} c^{k}_{j} (v) \varphi^{j} ,
\end{equation}
\begin{equation}
\label{eq:4.15}
  : \varphi^{k} J^{(v)} : - (-1)^{p(v)(p(k)+1)} : J^{(v)} \varphi^{k}:\, = \underset{j \in S_{> 0}}{\sum} c^{k}_{j} (v)\partial \varphi^j .
\end{equation}
(b) For $u \in \fg_0, v\in \fg_{1/2}$ and $j \in S_{> 0}$ one has
\begin{equation}
\label{eq:4.16}
  :: \Phi_{v} \varphi^{j} : J^{(u)} : = : \Phi_{v} : \varphi^{j} J^{(u)} :: + \underset{k \in S_{> 0}}{\sum} c^{j}_{k} (u) : (\partial \Phi_{v}) \varphi^k:,
\end{equation}
\begin{equation}
\label{eq:4.17}
  :: \varphi^{j} \Phi_{v} : J^{(u)} : = : \varphi^{j} : \Phi_{v} J^{(u)}: : + \underset{k \in S_{> 0}}{\sum} (-1)^{p(u)p(v)} c^{j}_{k} (u) : \varphi^{k} \partial \Phi_{v} : .
\end{equation}
\begin{proof}
  It uses the $\lambda$-bracket calculus, see \cite{K2}, \cite{DSK}.  Formula
  (\ref{eq:4.14}) follows by the non-commutative Wick formula, (\ref{eq:4.15}) by quasicommutativity, and (\ref{eq:4.16}) by quasiassociativity of a vertex algebra.  As an example, we prove here (\ref{eq:4.16}).  By quasiassociativity we have 
    \begin{displaymath}
      :: \Phi_{v} \varphi^{j} : J^{(u)} : - : \Phi_{v} : \varphi^{j} J^{(u)} : :
  \end{displaymath}
    \begin{displaymath}
      = (-1)^{p(u)(p(j)+1)} (\int^{- \partial}_{0} : \Phi_{v} [{J^{(u)}}_{\lambda} \varphi^{j}]  d \lambda : - : \Phi_{v} \int^{- \partial}_{0} [{J^{(u)}}_{\lambda} \varphi^{j}] d \lambda :). 
  \end{displaymath}
  Using (\ref{eq:4.14}), we obtain that the RHS is equal to 
\begin{displaymath}
  - \underset{k \in S_{> 0}}{\sum} c^{j}_{k} (u) (\int^{- \partial}_{0} : \Phi_{v} \varphi^{k}: d \lambda - : \Phi_{v} \int^{- \partial}_{0} \varphi^{k} d \lambda :) 
\end{displaymath}
\begin{displaymath}
  = \underset{k \in S_{> 0}}{\sum} c^{j}_{k} (u) (\partial : \Phi_{v} \varphi^{k} : - : \Phi_{v} \partial \varphi^{k} :) = \underset{k \in S_{> 0}}{\sum} c^{j}_{k} (u) : (\partial \Phi_{v}) \varphi^k:,
\end{displaymath}
proving (\ref{eq:4.16})
\end{proof}
\end{lemma}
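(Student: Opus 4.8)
The plan is to deduce all four identities of Lemma~\ref{lem:4.4} from the $\lambda$-bracket calculus of the vertex algebra $\C^k(\fg,x,f)$, the whole computation resting on a short list of elementary brackets among generators together with the three standard rules of a vertex algebra: the non-commutative Wick formula, quasicommutativity, and quasiassociativity (see \cite{K2}, \cite{DSK}). First I would record the elementary brackets. Because $\C^k(\fg,x,f)=V^k(\fg)\otimes F^{\ch}\otimes F^{\ne}$ and the $\lambda$-bracket vanishes between distinct summands, one has $[{\varphi^k}_\lambda a]=0$ for $a\in\fg$ and $[{\Phi_v}_\lambda a]=[{\Phi_v}_\lambda\varphi^j]=0$. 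Within $F^{\ch}$, formula (\ref{eq:2.2}) gives $[{\varphi^k}_\lambda\varphi_i]=\langle\varphi^k,\varphi_i\rangle^{\ch}$, a scalar that is constant in $\lambda$ and equal to $\pm\delta_{ki}$, while $[{\varphi^k}_\lambda\varphi^j]=0$ since the charged-fermion form pairs $\Pi\fg_{>0}$ with $\Pi\fg_{>0}^*$ and hence vanishes on $\Pi\fg_{>0}^*\times\Pi\fg_{>0}^*$. These are the only inputs needed.

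For (\ref{eq:4.14}) I would insert $J^{(v)}=v+\sum_{j\in S_{>0}}(-1)^{p(j)}:\varphi_{[v,u_j]}\varphi^j:$ from (\ref{eq:2.7}) and apply the non-commutative Wick formula termwise. Since $[{\varphi^k}_\lambda v]=0$, only the normally ordered terms contribute; in each, $[{\varphi^k}_\lambda\varphi_{[v,u_j]}]$ is the scalar extracting the $\varphi_k$-coefficient of $\varphi_{[v,u_j]}=\sum_i c^i_j(v)\varphi_i$, the middle Wick term dies because $[{\varphi^k}_\lambda\varphi^j]=0$, and the Wick integral term dies because the first bracket is central. Collecting the surviving scalars yields $\sum_{j\in S_{>0}}c^k_j(v)\varphi^j$; the point to retain for the sequel is that this is independent of $\lambda$. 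Then (\ref{eq:4.15}) is immediate from quasicommutativity, $:ab:-(-1)^{p(a)p(b)}:ba:=\int_{-\partial}^0[a_\lambda b]\,d\lambda$, applied with $a=\varphi^k$ (of parity $p(k)+1$) and $b=J^{(v)}$: since the integrand $[{\varphi^k}_\lambda J^{(v)}]$ is $\lambda$-independent, integrating a constant over $[-\partial,0]$ produces a single $\partial$, giving $\sum_{j\in S_{>0}}c^k_j(v)\,\partial\varphi^j$.

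Identities (\ref{eq:4.16}) and (\ref{eq:4.17}) concern the associator of a triple normally ordered product, so I would apply quasiassociativity to the triples $(\Phi_v,\varphi^j,J^{(u)})$ and $(\varphi^j,\Phi_v,J^{(u)})$ respectively. This writes $::ab:c:-:a:bc::$ as integrals $\int_0^{-\partial}$ of the $\lambda$-brackets $[a_\lambda c]$ and $[b_\lambda c]$; since $\Phi_v$ lies in $F^{\ne}$ and commutes with $J^{(u)}\in V^k(\fg)\otimes F^{\ch}$, the only surviving bracket is $[{\varphi^j}_\lambda J^{(u)}]=\sum_{k\in S_{>0}}c^j_k(u)\varphi^k$ from (\ref{eq:4.14}) (or equivalently $[{J^{(u)}}_\lambda\varphi^j]$, obtained from it by skewsymmetry), which is again $\lambda$-independent. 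Evaluating the remaining integrals of this constant integrand and simplifying with the Leibniz rule $\partial:\Phi_v\varphi^k:=:(\partial\Phi_v)\varphi^k:+:\Phi_v\,\partial\varphi^k:$ collapses the associator to $\sum_{k\in S_{>0}}c^j_k(u):(\partial\Phi_v)\varphi^k:$ for (\ref{eq:4.16}), and to the analogous $:\varphi^k\,\partial\Phi_v:$ expression carrying the extra sign $(-1)^{p(u)p(v)}$ for (\ref{eq:4.17}).

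None of these steps is conceptually hard; the real obstacle is the Koszul sign bookkeeping. The charged fermions carry reversed parity, $p(\varphi^k)=p(k)+1$, so each invocation of the Wick, quasicommutativity, and quasiassociativity formulas introduces sign factors depending on these parities, and if the skewsymmetry route is taken for (\ref{eq:4.16})--(\ref{eq:4.17}) it contributes a further $(-1)^{p(u)(p(j)+1)}$ that must cancel exactly against the sign prefactor produced by quasiassociativity. Keeping every $(-1)^{p(\cdot)p(\cdot)}$ consistent, and correctly reading off that $\int_{-\partial}^0$ and $\int_0^{-\partial}$ applied to a $\lambda$-independent integrand each yield a single derivative $\partial$, is where all the care must go.
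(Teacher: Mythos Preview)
Your proposal is correct and follows essentially the same route as the paper: (\ref{eq:4.14}) via the non-commutative Wick formula applied to the definition (\ref{eq:2.7}) of $J^{(v)}$, (\ref{eq:4.15}) by quasicommutativity using that the bracket is $\lambda$-independent, and (\ref{eq:4.16})--(\ref{eq:4.17}) by quasiassociativity combined with the vanishing of $[{\Phi_v}_\lambda J^{(u)}]$ and the computation (\ref{eq:4.14}). The paper's worked example for (\ref{eq:4.16}) passes through $[{J^{(u)}}_\lambda\varphi^j]$ via skewsymmetry exactly as you anticipate, and your remarks about the Koszul sign bookkeeping and the reversed parity of the charged fermions are on point.
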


We have, by formula (\ref{eq:2.13}), for $i \in S_{0}$ :
\begin{displaymath}
  d_{(0)} J^{(u_i)} \ = \underset{j \in S_{1}}{\sum} (f | [u_{i},u_{j}]) \varphi^{j} + \underset{j \in S_{1/2}}{\sum} (-1)^{p(i)(p(j)+1)} : \varphi^{j}
  \Phi_{[u_i, u_j]}:.
\end{displaymath}
It follows that
\begin{equation}
  \label{eq:4.18}
  d_{(0)} \underset{i \in S_{0}}{\sum} : J^{(u^{i})} \ J^{(u_{i})} : = A_{1} + A_{2} + A_{3} + A_{4} ,
\end{equation}
where
\begin{equation}
\label{eq:4.19}
  A_{1} = \underset{i \in S_{0}}{\sum} \ \underset{j \in S_{1}}{\sum} (f | [u^{i},u_{j}]) : \varphi^{j} J^{(u_{i})} : ,
\end{equation}
  \begin{equation}
\label{eq:4.20}
  A_{2} = \underset{i \in S_{0}}{\sum} \ \underset{j \in S_{1/2}}{\sum} (-1)^{p(i)(p(j)+1)} : : \varphi^{j} \Phi_{[u^{i},u_{j}]} : J^{(u_{i})} : ,
\end{equation}
\begin{equation}
    \label{eq:4.21}
    A_{3} = \underset{i \in S_{0}}{\sum} \ \underset{j \in S_{1}}{\sum} (-1)^{p(i)} (f | [u_{i},u_{j}]) : J^{(u^{i})} \varphi^{j} : ,
\end{equation}
\begin{equation}
    \label{eq:4.22}
    A_{4} = \underset{i \in S_{0}}{\sum} \ \underset{j \in S_{1/2}}{\sum} (-1)^{p(i)} (-1)^{p(i)(p(j)+1)} : J^{(u^{i})} \varphi^{j} \Phi_{[u_{i},u_{j}]} :.
\end{equation}

In order to simplify expressions for those elements, recall the operator $\Omega_{0}$, defined by $(\ref{eq:3.4})$. By Proposition 3.1, this operator is diagonalizable in $\fg_{j}$. Hence we can choose $u_{i} \in \fg_{1/2}$
(resp. $\fg_{1})$ to be eigenvectors of $\Omega_{0}$ ; denote by $a_{i}$ (resp. $b_{i}$) the corresponding eigenvalues.

We have by (4.16) :
\begin{displaymath}
  A_{2} = \underset{i \in S_{0}}{\sum} \ \underset{j \in S_{1/2}}{\sum} : \varphi^{j} \Phi_{[u^{i},u_{j}]} J^{(u_{i})} : + \underset{k \in S_{1/2}}{\sum} \ \underset{i \in S_{0}}{\sum} \ \underset{j \in S_{1/2}}{\sum}
   c^{j}_{i,k} :\varphi^k \partial \Phi_{[u^{i},u_{j}]}:.
\end{displaymath}
\noindent The first sum in this expression is equal to 
\begin{equation}
    \label{eq:4.23}
    A^{'}_{2} = \underset{i \in S_{0}}{\sum} \ \underset{j \in S_{1/2}}{\sum} : \Phi_{[u^{i},u_{j}]} \varphi^{j} J^{(u_{i})} : ,
\end{equation}
while the second sum is equal to
\begin{displaymath}
  \underset{k \in S_{1/2}}{\sum} : \varphi^{k} \underset{i \in S_{0}}{\sum} \ \
  \partial \Phi_{[u^{i},[u_{i},u_{k}]]} : .
\end{displaymath}
Hence we obtain
\begin{equation}
    \label{eq:4.24}
    A_{2} = A^{'}_{2} + \underset{k \in S_{1/2}}{\sum} a_{k} : \varphi^{k} \partial \Phi_{k}: .
\end{equation}
Next, we obtain, using
(\ref{eq:2.6}), (\ref{eq:4.14}) and (\ref{eq:4.21}),
\begin{displaymath}
  A_{3} = A_{1} - \underset{k \in S_{1}}{\sum}\underset{i \in S_{0}}{\sum}
  \underset{j \in S_{1}}{\sum} c_{i,k}^j(f|[u^i, u_j])\partial\varphi^k,
\end{displaymath}
hence
\begin{equation}
    \label{eq:4.25}
    A_{3} = A_{1} - \underset{k \in S_{1}}{\sum} b_{k} (f|u_{k}) \partial \varphi^{k},
\end{equation}
since
\begin{displaymath}
\underset{j \in S_{1}}{\sum} c_{i,k}^j(f|[u^i, u_j])=(f|\Omega_0(u_k)).
\end{displaymath}  
Finally, for $A_{4}$, given by (\ref{eq:4.22}), we have, using (\ref{eq:4.15}) and (\ref{eq:4.23}) :
\begin{equation}
    \label{eq:4.26}
    A_{4} = A^{'}_{2} - \underset{k \in S_{1/2}}{\sum} a_{k} : (\partial \varphi^{k}) \Phi_{k} : .
\end{equation}

From (\ref{eq:4.18}) - (\ref{eq:4.26})  we obtain that the element III is equal to the sum of the four elements 
\begin{equation}
    \label{eq:4.27}
    III_{A} = - \underset{i \in S_{0}}{\sum} \ \underset{j \in S_{1}}{\sum} (f | [u^{i},u_{j}]) : \varphi^{j} J^{(u_{i})} : ,
\end{equation}
\begin{equation}
    \label{eq:4.28}
    III_{B} = - \underset{i \in S_{0}}{\sum} \ \underset{j \in S_{1/2}}{\sum} : \Phi_{[u^{i},u_{j}]} \varphi^{j} J^{(u_{i})}:,
\end{equation}
\begin{equation}
    \label{eq:4.29}
    III_{C} = \frac{1}{2} \underset{j \in S_{1}}{\sum} b_{j} (f | u_{j}) \partial \varphi^{j} ,
\end{equation}
\begin{equation}
    \label{eq:4.30}
    III_{D} = \frac{1}{2} \underset{j \in S_{1/2}}{\sum} a_{j} \big(:(\partial \varphi^{j})\Phi_{j} : - : \varphi^{j} \partial \Phi_{j} : \big).
\end{equation}

We have :
\begin{displaymath}
  III_{A} = \underset{j \in S_{1}}{\sum} (-1)^{p(j)} : \varphi^{j} \underset{i \in S_{0}}{\sum} (u^{i} | [f,u_{j}]) J^{(u_{i})} : ,
\end{displaymath}
hence, by $(\ref{eq:4.7})$,
\begin{equation}
    \label{eq:4.31}
    III_{A} = - I_{B}.
\end{equation}
Using Lemma \ref{lem:4.3}(a), we obtain 
\begin{equation}
    \label{eq:4.32}
    II_{D} = - III_{B}.
\end{equation}
Hence, by $(\ref{eq:4.13})$ and Lemma \ref{lem:4.3}, we have
\begin{equation}
    \label{eq:4.33}
    I + II + III = I_{D} + II_{E} + III_{C} + III_{D}.
\end{equation}

Next, we have
\begin{equation}
    \label{eq:4.34}
    I_D + III_C = \underset{j \in S_1}{\sum} \big((k + h^\vee)(f|u_j)  + (\rho_{>0}|[f,u_j])\big) \partial \varphi^j,
\end{equation}
\begin{equation}
    \label{eq:4.35}
    II_E + III_D = (k + h^\vee) \underset{i \in S_{1/2}}{\sum} :  \Phi_i  \partial \varphi^i : \ - \frac{1}{2} \underset{i \in S_{1/2}}{\sum} :  (\partial \Phi_{\Omega_{0} u_{i}})  \varphi^i : \ + \underset{i \in S_{1/2}}{\sum} :  \Phi_{[u_{i}, \rho_{> 0}]}  \partial \varphi^i : .
\end{equation}
Indeed, by (\ref{eq:4.8}) and (\ref{eq:4.29}), we have, using (\ref{eq:4.3})
and (\ref{eq:2.8a}):
\begin{displaymath}
  I_D + III_C \ = \ \underset{j \in S_1}{\sum} \big((k + \frac{1}{2} b_j) (f|u_j) + h^\vee (f|u_j)  - \frac{1}{2} \kappa_0 (f,u_j)  + \frac{1}{2}
  \str_{\fg_{> 0}} \ad [f,u_j]\big) \partial \varphi^j.
\end{displaymath}
Applying (\ref{eq:4.4}) and (\ref{eq:4.6}) to the RHS, we obtain (\ref{eq:4.34}).

In order to prove (\ref{eq:4.35}), we rewrite (\ref{eq:4.30}) as follows :
\begin{displaymath}
  III_D \ = \ \frac{1}{2} \underset{k \in S_{1/2}}{\sum}
  \big(:  (\partial \varphi^k)  \Phi_{\Omega_0 u_k}  : -
  :  \varphi^k \partial  \Phi_{\Omega_{0} u_{k}}  :\big)  = \frac{1}{2} \underset{k \in S_{1/2}}{\sum}  \big(: \Phi_{\Omega_0 u_k} \partial \varphi^k:  -
  : (\partial \Phi_{\Omega_0 u_k})  \varphi^k  :\big).
\end{displaymath}
We also rewrite (\ref{eq:4.12}), using (\ref{eq:4.3}), (\ref{eq:4.4}) and (\ref{eq:4.6}), as follows :
\begin{displaymath}
II_E \ = \ (k + h^\vee) \ \underset{i \in S_{1/2}}{\sum} : \Phi_i  \partial \varphi^i  :  - \frac{1}{2} \underset{i \in S_{1/2}}{\sum}  :  \Phi_{\Omega_0 u_i}  \partial \varphi^i  :  +   \underset{i \in S_{1/2}}{\sum}  :  \Phi_{[u_i , \rho_{> 0}]} \partial \varphi^i : .
\end{displaymath}
\noindent Adding up these two expressions, we get (\ref{eq:4.35}).

\vspace{4mm}

\begin{lemma}
  \label{lem:4.5}
  One has
    \begin{displaymath}
  \Omega_0 (u) \ = \ 2 [\rho_{> 0} , u] \,\,\, \hbox{for}\,\,\, u \in \fg_{1/2}.
  \end{displaymath}
\end{lemma}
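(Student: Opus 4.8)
The plan is to prove Lemma \ref{lem:4.5}, which asserts that $\Omega_0(u) = 2[\rho_{>0},u]$ for $u \in \fg_{1/2}$. The key observation is that both sides are linear operators on $\fg_{1/2}$, and the right-hand side is nothing but $\ad(2\rho_{>0})$ restricted to $\fg_{1/2}$. So the statement is equivalent to: the operator $\Omega_0 - 2\ad\rho_{>0}$ annihilates $\fg_{1/2}$.

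First I would compute $\Omega_0$ more explicitly using formula (\ref{eq:4.4}) from Lemma \ref{lem:4.1}, namely $\kappa_0(u,v) = (\Omega_0 u \mid v)$. The idea is to pair $\Omega_0(u)$ against an arbitrary element $w \in \fg_{-1/2}$ and show $(\Omega_0 u \mid w) = 2([\rho_{>0},u] \mid w) = 2(\rho_{>0} \mid [u,w])$, where $[u,w] \in \fg_0$. By (\ref{eq:4.4}) we have $(\Omega_0 u \mid w) = \kappa_0(u,w) = \str_{\fg}(p_0 (\ad u)(\ad w))$, which for $u \in \fg_{1/2}$, $w \in \fg_{-1/2}$ equals $\str_{\fg_{1/2}}((\ad u)(\ad w)) + \str_{\fg_{-1/2}}((\ad u)(\ad w))$ — actually one must be careful that $(\ad u)(\ad w)$ preserves each graded piece and restrict accordingly. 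The cleaner route: by invariance of $\kappa_0$ (which follows since the partial Killing forms $\kappa_j$ are $\ad\fg_0$-invariant on the relevant pieces), $\kappa_0(u,w) = \kappa_0$-type trace. Then I would relate $\str_{\fg_{1/2}}\ad[u,w]$ and $\str_{\fg_{-1/2}}\ad[u,w]$ to $\str_{\fg_{>0}}\ad[u,w]$ and $\str_{\fg_{<0}}\ad[u,w]$, and invoke (\ref{eq:4.6}): $\str_{\fg_{>0}}\ad v = 2(\rho_{>0} \mid v)$ for $v \in \fg_0$.

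More precisely, the chain I anticipate is: $(\Omega_0 u \mid w) = \kappa_0(u,w)$; using invariance of the bilinear form $(\cdot\mid\cdot)$ and the grading, I would rewrite $\kappa_0(u,w) = \str_{\fg_0}((\ad u)(\ad w))$ and then move to $\str_{\fg_0}\ad[u,w]$ plus a symmetric correction term, mirroring the computation of (\ref{eq:4.3})–(\ref{eq:4.2}). Since $u \in \fg_{1/2}$ and $w \in \fg_{-1/2}$, the products $(\ad u)(\ad w)$ and $(\ad w)(\ad u)$ each preserve the grading, and a short supertrace manipulation (exactly analogous to the proof of (\ref{eq:4.2})) should yield $\kappa_0(u,w) = \str_{\fg_{>0}}\ad[u,w]$ (the $\fg_{\geq 1}$ contributions from $\ad u \circ \ad w$ telescope against those from $\ad w \circ \ad u$, leaving only the $\fg_{1/2}$ piece, which combines with $\fg_{<0}$ contributions via Lemma \ref{lem:4.1}). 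Then $\str_{\fg_{>0}}\ad[u,w] = 2(\rho_{>0} \mid [u,w])$ by (\ref{eq:4.6}), and invariance gives $2(\rho_{>0} \mid [u,w]) = 2([\rho_{>0},u] \mid w)$. Since $w \in \fg_{-1/2}$ is arbitrary and the pairing $\fg_{1/2} \times \fg_{-1/2} \to \FF$ is nondegenerate by (\ref{eq:1.5}), we conclude $\Omega_0(u) = 2[\rho_{>0},u]$.

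The main obstacle I expect is getting the bookkeeping of graded pieces and signs right in the supertrace step: identifying exactly which graded summands of $\fg$ contribute to $\str_{\fg}(p_0(\ad u)(\ad w))$ when $u,w$ have half-integer degree, and checking that the "off-diagonal" contributions cancel in pairs so that only the $\str_{\fg_{>0}}\ad[u,w]$ term (or equivalently a $\rho_{>0}$-term) survives. An alternative, possibly more robust, approach avoids traces entirely: express $\Omega_0 = \sum_{k \in S_0}(\ad u^k)(\ad u_k)$ acting on $\fg_{1/2}$, note $\fg_{1/2}$ is a module over $\fg_0$, and observe that $\Omega_0$ is the image of the Casimir of $\fg_0$ associated with the form $(\cdot\mid\cdot)|_{\fg_0}$ — but since $\fg_{1/2}$ need not be a completely reducible or even a "Casimir-scalar" module, this only shows $\Omega_0$ is $\fg_0$-equivariant, hence $\Omega_0 - 2\ad\rho_{>0}$ is $\fg_0$-equivariant; one would then still need to identify it, which brings us back to the trace computation. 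So I would carry out the trace manipulation as the primary argument, modeled closely on the proofs of Lemmas \ref{lem:4.1} and \ref{lem:4.2}.
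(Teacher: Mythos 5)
Your argument is correct in outline, but it is a genuinely different proof from the one in the paper. The paper does not compute any traces at this point: it observes that the elements I, II, III are all $d_{(0)}$-exact, hence so is their sum $I_D+II_E+III_C+III_D$ from (\ref{eq:4.33}); applying $d_{(0)}$ to the explicit expressions (\ref{eq:4.34}) and (\ref{eq:4.35}) via (\ref{eq:2.6}) and using $d_{(0)}^2=0$ forces the coefficient $\langle u_j,\tfrac12\Omega_0(u_i)-[\rho_{>0},u_i]\rangle^{\ne}$ of $:\varphi^i\partial\varphi^j:$ to vanish, and the lemma follows from the non-degeneracy of $\langle\cdot,\cdot\rangle^{\ne}$, i.e.\ from (\ref{eq:1.5}). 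Your route is a self-contained Lie-superalgebra computation, and it does go through: writing $A=\ad u$, $B=\ad w$ for $u\in\fg_{1/2}$, $w\in\fg_{-1/2}$ (one may assume $p(u)=p(w)$), the identity $\str_{\fg_j}(AB)=(-1)^{p(u)p(w)}\str_{\fg_{j-1/2}}(BA)$ gives $\str_{\fg_j}\ad[u,w]=\str_{\fg_j}(AB)-\str_{\fg_{j+1/2}}(AB)$, so the sum over $j>0$ telescopes to
\begin{displaymath}
\str_{\fg_{>0}}\ad[u,w]=\str_{\fg_{1/2}}(AB),
\end{displaymath}
and the one link you leave implicit is the passage from $\str_{\fg_{1/2}}(AB)$ to $\kappa_0(u,w)=\str_{\fg_0}(AB)$: their difference is exactly $\str_{\fg_0}\ad[u,w]$, which vanishes because the roots of $\fg_0$ come in pairs $\pm\alpha$ of equal parity. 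With that supplied, (\ref{eq:4.6}) and invariance give $(\Omega_0u\,|\,w)=2([\rho_{>0},u]\,|\,w)$ for all $w\in\fg_{-1/2}$, and non-degeneracy of the pairing $\fg_{1/2}\times\fg_{-1/2}\to\FF$ (which follows from non-degeneracy of $(\cdot|\cdot)$ alone, not from (\ref{eq:1.5}) as you say) finishes the proof. What each approach buys: the paper's argument is essentially free, since (\ref{eq:4.34}) and (\ref{eq:4.35}) are already in hand from the proof of Theorem \ref{th:3.1}; yours is independent of the vertex-algebra machinery, of $f$, and of hypothesis (\ref{eq:1.5}), so it establishes the identity for an arbitrary $\tfrac12\ZZ$-grading.
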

\begin{proof}
  Since elements I, II, III lie in the image of $d_{(0)}$ and $d^{2}_{(0)} = 0$ , we obtain, using (\ref{eq:4.33}) :
  \begin{displaymath}
    0 \ = \ d_{(0)} (I_D \ + \ III_C) \ + \ d_{(0)} (III_E \ + \ III_D).
  \end{displaymath}
  Substituting here (\ref{eq:4.34}) and (\ref{eq:4.35}) and using formulas (\ref{eq:2.6}) for the action of $d_{(0)}$ , we obtain :
  \begin{displaymath}
    0 \ = \underset{i, j \in S_{1/2}}{\sum}  (-1)^{p(i)}
    < u_j , \frac{1}{2} \Omega_0 (u_i)  - [\rho_{> 0} , u_i]>^{\ne} :
    \varphi^i \partial \varphi^j : .
  \end{displaymath}
  Due to the non-degeneracy of the bilinear form $<  .,.  >^{\ne}$, the lemma follows.
\end{proof}

Finally, we treat the remaining three elements IV, V, and VI. Using (\ref{eq:2.13}), we obtain :
\begin{equation}
    \label{eq:4.36}
    IV  =  - (k + h^\vee) \underset{j \in S_1}{\sum} \ (f|u_j) \partial \varphi^j \ - \frac{k+h^\vee}{2}  \underset{j \in S_{1/2}}{\sum}  \partial : \Phi_j \varphi^j : ,
\end{equation}

\begin{equation}
    \label{eq:4.37}
    V  = - \underset{j \in S_1}{\sum}  (\rho_{> 0} | [f , u_j]) \partial \varphi^j  +  \underset{j \in S_{1/2}}{\sum} \partial  :\Phi_{[\rho_{> 0} , u_j]}
    \varphi^j:. 
\end{equation}
Using (\ref{eq:2.6}), we obtain 
\begin{equation}
    \label{eq:4.38}
    VI  =  \frac{k+h^\vee}{2}  \underset{j \in S_{1/2}}{\sum} ( : \varphi^j \partial \Phi_j :  -  : \Phi_j  \partial \varphi^j : ).
\end{equation}
Adding up (\ref{eq:4.36}) - (\ref{eq:4.38}), we obtain
\begin{equation}
    \label{eq:4.39}
    \begin{array}{l}
        IV + V + VI  = - (k + h^\vee) \underset{j \in S_1}{\sum} (f | u_j) \partial \varphi^j  - \underset{j \in S_1}{\sum} (\rho_{> 0} | [f , u_j]) \partial \varphi^j  \\
        - (k + h^\vee) \underset{j \in S_{1/2}}{\sum} : \Phi_j  \partial \varphi^j : + \underset{j \in S_{1/2}}{\sum} \partial  :  \Phi_{[\rho_{> 0} , u_j]} \varphi^j:.
    \end{array}
\end{equation}
Adding up (\ref{eq:4.33}) and (\ref{eq:4.39}), and using (\ref{eq:4.34}), (\ref{eq:4.35}) and Lemma \ref{lem:4.5}, we conclude that $d_{(0)} J^{\{ f \}} \ = \ 0$, completing the proof of Theorem \ref{th:3.1} .

\section{Proof of Theorem \ref{th:3.2}} 
\label{sec:5}

First, introduce the following convenient notation.  Let $\fa$ (resp. $\fa$') be the sum of some $\fg_{j}$'s (resp. the remaining $\fg_{j}$'s) in (\ref{eq:1.1}). Then we let $\delta_{u,\fa} = 1$ (resp. 0 ) if $u \in \fa$ (resp. $\fa$').
Then we have for $u$ , $v \in \fg$ :
\begin{equation}
\label{eq:5.1}
  \underset{i \in S_{> 0}}{\sum} ( u_{i} | v ) u^{i} \ = \ \delta_{v, \fg_{< 0}} v \text ; \,\, \underset{i \in S_{> 0}}{\sum} ( v | u^{i} ) u_{i} \ = \ \delta_{v, \fg_{> 0}}v; 
\end{equation}
\begin{equation}
  \label{eq:5.2}
    \underset{i \in S_{> 0}}{\sum} (u | u^{i}) (u_{i} | v) \ = \ \delta_{u , \fg_{> 0}} (u | v) \ = \ \delta_{v , \fg_{< 0}} (u | v); 
\end{equation}
\begin{equation}
  \label{eq:5.3}
  \underset{i \in S_{> 0}}{\sum} ( u | u^{i} ) ( v | u_{i} )
  \ = \ \delta_{u , \fg_{> 0}} ( v | u ) \ =
  \ \delta_{v , \fg_{< 0}} ( v | u ).
\end{equation}

\noindent Similar formulas hold if we replace $S_{> 0}$ by $S_{0}$ , and $\fg_{> 0}$ and $\fg_{< 0}$ by $\fg_{0}$ ; these formulas will be denoted by
(5.1)' , (5.2)' and (5.3)' .

Next, let $v^{\ch}$ denote the second summand on the right in (\ref{eq:2.7}). Then 
\begin{equation}
  \label{eq:5.4}
    v^{\ch} \ = \ \underset{i,j \in S_{> 0}}{\sum} (-1)^{p(i)} ([v , u_{j}] | u^{i} ) :\varphi_{i} \varphi^{j}: \ .
\end{equation}

Next, by condition (\ref{eq:1.5}), we have
\begin{equation}
  \label{eq:5.5}
    u^{i} \ = \ [ u^{(i)} , f ] ,\,\, i \in S_{1/2} \ ,
\end{equation}
where the $ \{ u^{(i)} \}_{i \in S_{1/2}}$ is a basis of $\fg_{1/2}$ , dual to $\{ u_{i} \}_{i \in S_{1/2}}$ with respect to the bilinear form
(\ref{eq:1.4}). 

Next, by the quasiassociativity of the normally ordered product, we have for $i ,j ,k ,l \in S_{> 0}$  
\begin{equation}
  \label{eq:5.6}
    : : \varphi_i \varphi^j : \varphi_k : \ = \ : \varphi_i \varphi^j \varphi_k : \ + \ (-1)^{p(j)} \delta_{j ,k} \partial \varphi_{i} \ ;
\end{equation}
\begin{equation}
\label{eq:5.7}  
    : : \varphi_i \varphi^j : \varphi_k : \ = \ : \varphi_i \varphi^j \varphi^k : \ + \ (-1)^{(p(i) +1)(p(j) + 1)} \delta_{i ,k} \partial \varphi^{j} \ ; 
\end{equation}
\begin{equation}
  \label{eq:5.8}
\begin{split}
  : : \varphi_i \varphi^j & : : \varphi_k \varphi^l : : \ = \ : \varphi_i \varphi^j \varphi_k \varphi^l : \ + \ (-1)^{p(k)} \delta_{j ,k}
  :( \partial \varphi_i) \varphi^l : \\
    & - \ (-1)^{p(j)p(k)} (-1)^{p(i)(p(j)+p(k))} \delta_{i ,l} : \varphi_{k} \partial \varphi^j : \ .
\end{split}
\end{equation}
\begin{lemma}
  \label{lem:5.1}
  We have, using (\ref{eq:5.4}) :
    \begin{equation}
\label{eq:5.9}
\underset{i \in S_0}{\sum} : u^i (u_i)^\ch : =
\underset{i, j \in S_{> 0}}{\sum} (-1)^{p(i)} : p_0 ([u_j , u^i])
\varphi_i \varphi^j : = \underset{i \in S_{0}}{\sum} : (u^{i})^\ch u_i :  ;
  \end{equation}
\begin{equation}
\label{eq:5.10}
\begin{split}
      \underset{i \in S_{0}}{\sum} : & (u^i)^\ch (u_i)^\ch : = \underset{i, j, k, l \in S_{> 0}}{\sum}(-1)^{p(i)+p(k)} ([u_l , u^k ] | p_0 [u_j , u^i]) : \varphi_i \varphi^j \varphi_k \varphi^l : \\
      & + \underset{\underset{[u_{j} , u^{i}] \in \fg_{0}}{i ,j ,k \in S_{> 0}}}{\sum} \ (-1)^{p(i)} (u_{j}  |  [ u^k , [ u_k , u^i ] ])
      ( :( \partial \varphi_i) \varphi^j : - : \varphi_i \partial \varphi^j :).
\end{split}
\end{equation}
    \begin{proof}
   Using the invariance of the bilinear form $(.  |  .)$ and (5.1)', we obtain 
      \begin{displaymath}
   \underset{i \in S_{0}}{\sum} : u^i ( u_i )^\ch : = \underset{j ,k \in S_{> 0}}{\sum} (-1)^{p(j)} : p_0 ([ u_k , u^j ]) \varphi_{j} \varphi^{k} : \ ,
   \end{displaymath}
      which is the first equality in (\ref{eq:5.9}) after replacing indices $j , k$ by $i , j$ . The proof of the second equality in (\ref{eq:5.9}) is the same. 
   
   Using the invariance of the bilinear form $(.  |  .)$ and (5.2)', we obtain 
      \begin{displaymath}
        \underset{i \in S_{0}}{\sum}
        : (u^i)^\ch (u_i)^\ch : =
     \underset{j ,k ,r ,s \in S_{> 0}}{\sum} (-1)^{p(j) + p(r)}
     ([u_s , u^r] | p_0 [u_k , u^j]): : \varphi_j  \varphi^k : : \varphi_{r} \varphi^{s}:: \ .
   \end{displaymath}
   Using (\ref{eq:5.8}), we see that this is equal to
      \begin{displaymath}
   = \underset{j, k, r, s \in S_{> 0}}{\sum} (-1)^{p(j) + p(r)} ([u_s , u^r ] \ | \ p_{0} [u_k , u^j]) : \varphi_j \varphi^k \varphi_{r} \varphi^{s} : 
   \end{displaymath}
      \begin{displaymath}
   + \underset{j, k, s \in S_{> 0}}{\sum} (-1)^{p(j)} ([u_s , u^k]  |  p_0 [u_k , u^j] ) : (\partial \varphi_j) \varphi^s :
   \end{displaymath}
      \begin{displaymath}
   - \underset{j, k, s \in S_{> 0}}{\sum} (-1)^{p(j)} (u_s  |  [u^k , p_0 [u_k , u^j] \ ] ) : \varphi_j \partial \varphi^s : \ .
   \end{displaymath}
   In the last term we used the invariance of $(.  |  .)$ and relabeling of indices; we also used that $(a|b) \neq 0  $ implies that $p(a) = p(b)$ in order to simplify the sign. Now (\ref{eq:5.10}) easily follows.
     \end{proof}
  \end{lemma}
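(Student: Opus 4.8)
The plan is to prove both identities by the same mechanical procedure: substitute the explicit formula (\ref{eq:5.4}) for each charged factor $v^{\ch}$, carry out the summation over $i\in S_0$ using the invariance of $(.|.)$ together with the completeness relations (5.1)'--(5.3)' for $\fg_0$, and finally reduce the resulting normally ordered products of charged fermions by means of the quasiassociativity identities (\ref{eq:5.6})--(\ref{eq:5.8}). The two displays differ only in how many of the factors $u^i, u_i$ sit in the affine tensor factor $V^k(\fg)$: in (\ref{eq:5.9}) exactly one of them does, so no fermionic contraction occurs, whereas in (\ref{eq:5.10}) both charged parts are genuinely fermionic and contractions appear.

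For (\ref{eq:5.9}) I would first note that, since $u^i\in\fg_0\subset V^k(\fg)$ has vanishing $\lambda$-bracket with every charged fermion, the product $:u^i(u_i)^{\ch}:$ is simply the affine field $u^i$ placed next to the fermionic bilinear, with no contraction. Substituting (\ref{eq:5.4}) for $(u_i)^{\ch}$ and pulling $u^i$ through, the only nontrivial step is the sum $\sum_{i\in S_0}([u_i,u_s]|u^r)u^i$; rewriting $([u_i,u_s]|u^r)=(u_i|[u_s,u^r])$ by invariance and applying (5.1)' collapses it to $p_0[u_s,u^r]$, and relabeling the dummy indices yields the middle expression. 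The second equality is obtained identically, the point being only that the normally ordered product of the even fermionic element $(u^i)^{\ch}$ with the $\fg_0$-field $u_i$ is supercommutative because their $\lambda$-bracket is zero.

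For (\ref{eq:5.10}) I would substitute (\ref{eq:5.4}) for both $(u^i)^{\ch}$ and $(u_i)^{\ch}$, producing a fourfold sum over $S_{>0}$ whose scalar coefficient contains $([u^i,u_k]|u^j)([u_i,u_s]|u^r)$ and whose operator part is the quartic $::\varphi_j\varphi^k::\varphi_r\varphi^s::$. Applying invariance to both scalar factors and then (5.2)' (equivalently (5.3)') collapses the $i$-summation to the coefficient $([u_s,u^r]|p_0[u_k,u^j])$, where self-adjointness of $p_0$ lets me move the projection onto either commutator. Expanding the quartic fermion product by the quasiassociativity identity (\ref{eq:5.8}) splits it into one fully ordered quartic term and two contraction terms, each carrying a derivative; the quartic term reproduces, after relabeling, the first sum on the right of (\ref{eq:5.10}).

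The hard part will be the two contraction terms. After imposing the Kronecker deltas from (\ref{eq:5.8}) I expect to obtain $:(\partial\varphi_j)\varphi^s:$ and $:\varphi_j\partial\varphi^s:$ contributions; the work is to simplify their signs using $(-1)^{2p(k)}=1$ and the fact that a nonzero pairing $(a|b)$ forces $p(a)=p(b)$, to move the projection through the commutators by invariance so that each coefficient becomes $(u_j|[u^k,[u_k,u^i]])$ after relabeling, and to check that the two terms enter with opposite sign so that they combine into $:(\partial\varphi_i)\varphi^j: - :\varphi_i\partial\varphi^j:$. The surviving $p_0$ in the intermediate coefficient is exactly what produces the summation restriction $[u_j,u^i]\in\fg_0$ recorded in (\ref{eq:5.10}); verifying that this restriction is consistent across both contraction terms, and that no stray sign survives, is the only genuinely delicate bookkeeping in the argument.
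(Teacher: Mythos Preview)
Your proposal is correct and follows essentially the same route as the paper: for (\ref{eq:5.9}) the paper substitutes (\ref{eq:5.4}), uses invariance and (5.1)$'$ to collapse the $S_0$-sum into $p_0[u_k,u^j]$, and relabels; for (\ref{eq:5.10}) it likewise substitutes, collapses via invariance and (5.2)$'$, expands the quartic by (\ref{eq:5.8}), and simplifies the contraction terms using $p(a)=p(b)$ when $(a|b)\neq 0$ and a relabeling---exactly the steps you outline, including the observation that the surviving $p_0$ becomes the restriction $[u_j,u^i]\in\fg_0$.
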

\begin{lemma}
  \label{lem:5.2}
  Recalling that $[u_i , u_j] = \underset{k}{\sum} c^{k}_{ij} u_{k}$ for $i, j, k \in S_{> 0}$ , we have
    \begin{equation}
      \label{eq:5.11}
      \underset{i, j, k, \in S_{> 0}}{\sum} (-1)^{p(i) + p(k)} c^{k}_{ij}
      :\varphi_k\varphi^ju^i: = \underset{i ,j \in S_{> 0}}{\sum} (-1)^{p(i)}
      : p_{< 0} ([u_j , u^i]) \varphi_i \varphi^j : \ ,
  \end{equation}
    \begin{equation}
      \label{eq:5.12}
      \underset{i, k \in S_{> 0}}{\sum} (-1)^{p(i)} : [u_k , u^i] \varphi_i \varphi^k : - \underset{i, j, k \in S_{> 0}}{\sum} (-1)^{p(i) + p(k)} c^{k}_{ij} : \varphi_k \varphi^j u^i : 
  \end{equation}
    \begin{displaymath}
  = \underset{i, j \in S_{> 0}}{\sum} (-1)^{p(i)} : p_{\geq 0} ([u_j , u^i]) \varphi_i \varphi^j : \ .
  \end{displaymath}
\begin{proof}
Using that $c^{k}_{ij} = ([u_i , u_j]  |  u^k)$, that the the bilinear form $(. \ | \ .)$ is invariant, equation (\ref{eq:5.1}), and that $p(i) + p(k) \ = \ p(j)$ if $c^{k}_{ij}\neq \ 0$, we obtain :
\begin{displaymath}
\underset{i,j,k \in S_{> 0}}{\sum} (-1)^{p(i) + p(k)} c^{k}_{ij} : \varphi_k \varphi^j u^i : = \underset{j,k \in S_{> 0}}{\sum} (-1)^{p(j)} : \varphi_k \varphi^j p_{< 0} [u_j , u^k] :  ,
\end{displaymath}
from which (\ref{eq:5.11}) follows.

By (\ref{eq:5.11}), the LHS of (\ref{eq:5.12}) is equal to 
\begin{displaymath}
(\underset{i,j \in S_{> 0}}{\sum} \ - \ \underset{\underset{[u_{j} , u^{i}] \in \fg_{< 0}}{i , j \in S_{> 0}}}{\sum}) (-1)^{p(i)} :[u_j,u^i] \varphi_i \varphi^j : .
\end{displaymath}
Formula (\ref{eq:5.12}) follows.
\end{proof}
\end{lemma}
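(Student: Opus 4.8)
The plan is to prove the first identity (\ref{eq:5.11}) by a single index contraction, and then to read off (\ref{eq:5.12}) as an immediate corollary. For (\ref{eq:5.11}) I would begin by writing the structure constant as a pairing, $c^k_{ij} = ([u_i,u_j]\,|\,u^k) = (u_i\,|\,[u_j,u^k])$, using invariance of the form. The crucial observation is that $c^k_{ij}\neq 0$ forces $p(i)+p(k)=p(j)$, so on the support of the sum the prefactor $(-1)^{p(i)+p(k)}$ is simply $(-1)^{p(j)}$ and no longer depends on $i$. The sum over $i$ then collapses by the completeness relation (\ref{eq:5.1}), in the sign-free form $\sum_{i\in S_{>0}}(u_i\,|\,w)u^i = p_{<0}(w)$ with $w=[u_j,u^k]$, replacing the floating current $u^i$ by the single current $p_{<0}[u_j,u^k]$. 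This yields the intermediate expression $\sum_{j,k\in S_{>0}}(-1)^{p(j)}:\varphi_k\varphi^j\,p_{<0}[u_j,u^k]:$.

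To finish (\ref{eq:5.11}) I would relabel $k\mapsto i$ and then move the current $p_{<0}[u_j,u^i]$ from the right end of the normally ordered product to the front, so as to match the ordering on the right-hand side. Since the affine current and the charged fermions live in different summands of the non-linear Lie conformal superalgebra, their $\lambda$-bracket vanishes; consequently every quasicommutativity and quasiassociativity correction of the type recorded in (\ref{eq:5.6})--(\ref{eq:5.8}) drops out, and the reordering costs only a Koszul sign. I would compute that sign from the parities, recalling the fermion parity reversal $p(\varphi_i)=p(i)+1$ and $p(\varphi^j)=p(j)+1$, so that the total fermion parity $p(i)+p(j)$ equals $p([u_j,u^i])$; the Koszul sign is therefore $(-1)^{(p(i)+p(j))^2}=(-1)^{p(i)+p(j)}$, which combines with the $(-1)^{p(j)}$ already present to give $(-1)^{p(i)}$. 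This is exactly the right-hand side of (\ref{eq:5.11}).

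For (\ref{eq:5.12}) the work is already done. Its left-hand side is the difference of $\sum_{i,k}(-1)^{p(i)}:[u_k,u^i]\varphi_i\varphi^k:$ and the left-hand side of (\ref{eq:5.11}). Substituting the right-hand side of (\ref{eq:5.11}) (after the harmless relabeling $j\mapsto k$) subtracts the $p_{<0}$-component of each current in place; no further reordering is needed, since in both terms the current already sits in front of the fermions. The elementary decomposition $[u_k,u^i]-p_{<0}[u_k,u^i]=p_{\geq 0}[u_k,u^i]$ then produces $\sum_{i,k}(-1)^{p(i)}:p_{\geq 0}([u_k,u^i])\varphi_i\varphi^k:$, which is the claimed expression after renaming $k\mapsto j$.

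I expect the only real obstacle to be the sign bookkeeping, and in particular the interplay between the two sources of signs: the parity constraint $p(i)+p(k)=p(j)$ coming from the structure constants, which is precisely what lets (\ref{eq:5.1}) be applied without a residual sign, and the fermion parity reversal, which governs the Koszul sign incurred when the current is commuted to the front. Once these are kept straight there is no genuine vertex-algebra content left: every putative correction term from the non-commutative Wick and quasiassociativity formulas vanishes because currents and fermions $\lambda$-commute, so Lemma \ref{lem:5.2} reduces to careful combinatorial accounting.
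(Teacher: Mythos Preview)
Your proposal is correct and follows essentially the same route as the paper: contract the index $i$ via $c^k_{ij}=(u_i\,|\,[u_j,u^k])$ and the completeness relation (\ref{eq:5.1}), using the parity constraint $p(i)+p(k)=p(j)$ to turn $(-1)^{p(i)+p(k)}$ into $(-1)^{p(j)}$, then reorder and relabel; (\ref{eq:5.12}) is immediate from (\ref{eq:5.11}). Your write-up is in fact more explicit than the paper's, which simply says ``from which (\ref{eq:5.11}) follows'' after reaching the intermediate expression $\sum_{j,k}(-1)^{p(j)}:\varphi_k\varphi^j\,p_{<0}[u_j,u^k]:$.

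One small sharpening: your claim that ``every quasicommutativity and quasiassociativity correction drops out'' is best justified not by checking the individual formulas (\ref{eq:5.6})--(\ref{eq:5.8}) term by term---some intermediate corrections are nonzero and only cancel in pairs---but by invoking the tensor product structure $\C^k(\fg,x,f)=V^k(\fg)\otimes F^{\ch}\otimes F^{\ne}$. Since the affine current $v$ lives in the first factor and the fermions in the second, $:v\varphi_i\varphi^j:$ literally equals $v\otimes{:}\varphi_i\varphi^j{:}$, and the reordering to $:\varphi_i\varphi^j v:$ is then pure Koszul sign by the definition of the normally ordered product on a tensor product of vertex algebras. This makes the sign computation you carry out (yielding $(-1)^{p(i)}$) fully rigorous without any appeal to quasiassociativity.
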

\begin{lemma}
  \label{lem:5.3} The expression
  \begin{displaymath}
    A_{< 0} \ = \ \underset{i, j, k, l \in S_{> 0}}{\sum} (-1)^{p(i) + p(k)}
    ([u_l , u^k]  |  p_{< 0}[u_j , u^i]) : \varphi_i \varphi^j \varphi_k \varphi^l :
  \end{displaymath}
is equal to $\frac{1}{2} A_{\neq 0}$, where $A_{\neq 0}$ is obtained from $A_{< 0}$ by replacing $p_{< 0}$ by $p_{\neq 0}$ . 
\begin{proof}
 Exchanging $i$ with $k$ and $j$ with $l$ in $A_{< 0}$ , we obtain
 \begin{displaymath}
 A_{< 0} \ = \ \underset{i, j, k, l}{\sum} (-1)^{p(i) + p(k)} ([u_l , u^k] \ | \ p_{> 0}[u_j , u^i]) : \varphi_i \varphi^j \varphi_k \varphi^l : . 
 \end{displaymath}
 Adding the two expressions for $A_{< 0}$, we obtain $A_{\neq 0}$.
\end{proof}
\end{lemma}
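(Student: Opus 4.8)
The plan is to read the claimed identity as a pure symmetrization statement. Write $A_{>0}$ for the expression obtained from $A_{<0}$ by replacing $p_{<0}$ with $p_{>0}$. Since $p_{\neq 0} = p_{<0} + p_{>0}$, the very definition of $A_{\neq 0}$ gives $A_{\neq 0} = A_{<0} + A_{>0}$, so the lemma is equivalent to the single equality $A_{<0} = A_{>0}$. I would therefore aim to transform the defining sum for $A_{<0}$ into that for $A_{>0}$ purely by relabeling the summation indices.

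Concretely, I would perform in $A_{<0}$ the dummy-index transposition $i \leftrightarrow k$, $j \leftrightarrow l$ over the index set $S_{>0}^{\,4}$. This leaves the index set and the sign $(-1)^{p(i)+p(k)}$ unchanged, sends the form factor to $([u_j,u^i] \,|\, p_{<0}[u_l,u^k])$, and sends the fermionic monomial to $:\!\varphi_k\varphi^l\varphi_i\varphi^j\!:$. Three standard moves then convert this back into the defining shape of $A_{>0}$. First, the orthogonality $(\fg_a|\fg_b)=0$ for $a+b\neq 0$ makes $p_{<0}$ and $p_{>0}$ mutually adjoint with respect to $(.\,|\,.)$, so the pairing becomes $(p_{>0}[u_j,u^i] \,|\, [u_l,u^k])$. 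Second, supersymmetry of the form, $(X|Y)=(-1)^{p(X)p(Y)}(Y|X)$, restores the order to $([u_l,u^k] \,|\, p_{>0}[u_j,u^i])$. Third, the super-commutativity of the normally ordered product reorders $:\!\varphi_k\varphi^l\varphi_i\varphi^j\!:$ back to $:\!\varphi_i\varphi^j\varphi_k\varphi^l\!:$.

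What remains is a sign count, and it cancels cleanly. On the support of the sum the even form forces the two blocks to have equal parity, say $p([u_j,u^i]) = p([u_l,u^k]) = \sigma$; hence the supersymmetry sign in the second move is $(-1)^{\sigma^2}=(-1)^{\sigma}$, while the block transposition of the two fermion pairs, each of total reversed parity $(p(i)+1)+(p(j)+1)=\sigma$, contributes $(-1)^{\sigma\cdot\sigma}=(-1)^{\sigma}$. Their product is $+1$, and the projection-adjointness move carries no sign, so the transformed summand is exactly the summand of $A_{>0}$. This gives $A_{<0}=A_{>0}$, and adding back $A_{<0}$ yields $A_{\neq 0}=2A_{<0}$, i.e. $A_{<0}=\tfrac{1}{2}A_{\neq 0}$.

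I expect the genuine obstacle to be the third move rather than the bare index swap: in a vertex algebra the reordering of a four-fold normally ordered product is only super-commutative up to contraction and derivative corrections (cf. the quasicommutativity identity (\ref{eq:5.8})), so one must check that these lower, quadratic-in-$\varphi$ terms drop out. They do because the coefficient $([u_l,u^k]\,|\,p_{<0}[u_j,u^i])$ and the contraction pattern transform oppositely under the same transposition $i\leftrightarrow k$, $j\leftrightarrow l$, so the correction terms cancel in pairs; equivalently, only the genuinely super-antisymmetric quartic top term of the monomial is seen by the coefficient. Establishing this cancellation carefully is where the real content of the lemma lies, and it is what justifies treating the block swap as a clean super-transposition in the argument above.
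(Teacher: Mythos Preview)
Your approach is exactly the paper's: swap $i\leftrightarrow k$, $j\leftrightarrow l$, use that $p_{<0}$ and $p_{>0}$ are adjoint for $(.\,|\,.)$, and check the sign. The paper does this in two lines without further comment.

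Your final paragraph, however, misidentifies where the content lies. There are no quasiassociativity or quasicommutativity corrections to cancel here: the $\varphi_i,\varphi^j$ are free fermions with $\lambda$-brackets that are scalars, so their $(-1)$-modes genuinely super-commute (from $[\Phi_{(m)},\Psi_{(n)}]=\langle\Phi,\Psi\rangle\,\delta_{m+n,-1}$ one gets $[\Phi_{(-1)},\Psi_{(-1)}]=0$), and the right-nested product $:\varphi_i\varphi^j\varphi_k\varphi^l:=\varphi_{i,(-1)}\varphi^j_{(-1)}\varphi_{k,(-1)}\varphi^l_{(-1)}|0\rangle$ is literally totally super-skew-symmetric. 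The identities (5.6)--(5.8) you cite concern comparing \emph{different nestings}, not reordering within the right-nested product. So the block swap $:\varphi_k\varphi^l\varphi_i\varphi^j:\to:\varphi_i\varphi^j\varphi_k\varphi^l:$ is a pure sign, your sign count is correct, and no ``cancellation in pairs'' argument is needed.
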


From (\ref{eq:5.4}) we obtain
\begin{equation}
    \label{eq:5.13}
    f^{\ch} \ = \ \underset{i, j \in S_{> 0}}{\sum} (-1)^{p(i)}
    (f  | [u_j , u^i]) : \varphi_i \varphi^j : .
\end{equation}
\begin{lemma}
  \label{lem:5.4}
  We have 
  \begin{displaymath}
    \underset{i \in S_{1/2}}{\sum} (-1)^{p(i)} : \Phi^{i} [f , u_i]^{\ch} : = \underset{i, j \in S_{> 0}}{\sum} (-1)^{p(j)}
    : \Phi_{[u_j , u^i]} \varphi_i \varphi^j: .
  \end{displaymath}
  \begin{proof}
    Substituting in the LHS the expression (\ref{eq:5.4}) for $v = [f , u_i]$,
    we obtain, by invariance of $(.  |  .)$ and (\ref{eq:1.4}) , 
   \begin{displaymath}
   \underset{i \in S_{1/2}}{\sum} \underset{j, k \in S_{> 0}}{\sum} (-1)^{p(i) + p(k)} \ \langle u_i , [ u_j , u^k ] \rangle^{\ne} : \Phi^{i}\varphi_{k} \varphi^j :
   \end{displaymath}
   \begin{displaymath}
     = \underset{j, k, \in S_{> 0}}{\sum} (-1)^{p(j)}
     \underset{i \in S_{1/2}}{\sum} \langle u_i , [ u_j , u^k ] \rangle^{\ne} : \Phi^{i}  \varphi_{k} \varphi^{j} :
   \end{displaymath}
   \begin{displaymath}
   = \underset{j, k \in S_{> 0}}{\sum} (-1)^{p(j)} : \Phi_{[ u_j , u^k ]} \varphi_k \varphi^j : ,
   \end{displaymath}
   \noindent proving the lemma.
  \end{proof}
\end{lemma}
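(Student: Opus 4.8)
The plan is to substitute the explicit charged part (\ref{eq:5.4}) with $v=[f,u_i]$ into the left-hand side, convert the resulting scalar coefficients into values of the neutral-fermion form $\langle\cdot,\cdot\rangle^{\ne}$ via invariance of $(.|.)$, and then collapse the sum over $S_{1/2}$ using the dual-basis reconstruction of the $\Phi$'s already exploited in the proof of Lemma \ref{lem:4.3}.

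First I would record, for $i\in S_{1/2}$, the expansion obtained from (\ref{eq:5.4}) after relabeling its indices,
\[
[f,u_i]^{\ch}=\sum_{j,k\in S_{>0}}(-1)^{p(k)}\,([[f,u_i],u_j]\,|\,u^k):\varphi_k\varphi^j:,
\]
so that the left-hand side of the lemma becomes
\[
\sum_{i\in S_{1/2}}\ \sum_{j,k\in S_{>0}}(-1)^{p(i)+p(k)}\,([[f,u_i],u_j]\,|\,u^k):\Phi^i\varphi_k\varphi^j:.
\]
The coefficient is simplified by applying invariance of $(.|.)$ twice, $([[f,u_i],u_j]\,|\,u^k)=([f,u_i]\,|\,[u_j,u^k])=(f\,|\,[u_i,[u_j,u^k]])$. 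Since $u_i\in\fg_{1/2}$ and $f\in\fg_{-1}$, only the $\fg_{1/2}$-part of $[u_j,u^k]$ survives the pairing, so by the definition (\ref{eq:1.4}) this equals $\langle u_i,p_{1/2}[u_j,u^k]\rangle^{\ne}=\langle u_i,[u_j,u^k]\rangle^{\ne}$. This produces the intermediate expression $\sum_{i\in S_{1/2}}\sum_{j,k\in S_{>0}}(-1)^{p(i)+p(k)}\langle u_i,[u_j,u^k]\rangle^{\ne}:\Phi^i\varphi_k\varphi^j:$.

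The one delicate step is the sign bookkeeping. A summand is nonzero only when $\langle u_i,[u_j,u^k]\rangle^{\ne}\neq0$; because the neutral form pairs elements of equal parity (the form $\langle\cdot,\cdot\rangle^{\ne}$ being even), this forces $u_i$ and $p_{1/2}[u_j,u^k]$ to have the same parity, i.e. $p(i)\equiv p(j)+p(k)$, whence $(-1)^{p(i)+p(k)}=(-1)^{p(j)}$ on every nonzero term. I can then factor $(-1)^{p(j)}$ out of the $i$-sum and, as $\Phi^i$ is the only factor depending on $i$, push $\sum_{i\in S_{1/2}}$ inside the normally ordered product.

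Finally I would apply the reconstruction identity $\sum_{i\in S_{1/2}}\langle u_i,w\rangle^{\ne}\Phi^i=\Phi_w$ (with $\Phi_w$ depending only on $p_{1/2}w$), which is valid because $\{\Phi^i\}$ is dual to $\{\Phi_i\}$ with respect to $\langle\cdot,\cdot\rangle^{\ne}$ and is the same formula used in the proof of Lemma \ref{lem:4.3}. Taking $w=[u_j,u^k]$ turns the inner sum into $\Phi_{[u_j,u^k]}$ and gives $\sum_{j,k\in S_{>0}}(-1)^{p(j)}:\Phi_{[u_j,u^k]}\varphi_k\varphi^j:$, which is the desired right-hand side after relabeling $k\mapsto i$. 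I expect the parity argument in the middle step to be the only point requiring care; the invariance manipulations and the dual-basis collapse are routine.
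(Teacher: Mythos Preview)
Your proposal is correct and follows essentially the same route as the paper's proof: substitute (\ref{eq:5.4}) for $v=[f,u_i]$, rewrite the scalar via invariance of $(.|.)$ as $\langle u_i,[u_j,u^k]\rangle^{\ne}$, replace $(-1)^{p(i)+p(k)}$ by $(-1)^{p(j)}$, and collapse the $i$-sum using the dual-basis identity $\Phi_w=\sum_{i\in S_{1/2}}\langle u_i,w\rangle^{\ne}\Phi^i$ from the proof of Lemma~\ref{lem:4.3}. The only difference is expository: you spell out the parity argument justifying the sign change, which the paper leaves implicit.
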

\begin{lemma}
  \label{lem:5.5}
  Let
  \begin{displaymath}
  P_0 \ = \ -f \ -\underset{i \in S_{1/2}}{\sum} (-1)^{p(i)} : \Phi^{i} [ f , u_i ] : + 1/2 \underset{i \in S_0}{\sum} : u^i u_i :
  \end{displaymath}
  \begin{displaymath}
  - \partial \rho_{> 0} + ( k + h^\vee ) \partial J^{(x)} - \frac{k + h^\vee}{2} \underset{i \in S_{1/2}}{\sum} : \Phi^{i} \partial \Phi_{i} :
  \end{displaymath}
  \begin{displaymath}
  - h^\vee \underset{i, \in S_{> 0}}{\sum} (-1)^{p(i)} : \varphi_i \partial \varphi^{i} : - \underset{i, j, k, \in S_{> 0}}{\sum} (-1)^{p(i) + p(k)} c^{k}_{ij} : \varphi_k \varphi^j u^i :
  \end{displaymath}
  \begin{displaymath} 
  + \underset{i, k \in S_{> 0}}{\sum} (-1)^{p(i)} : [ u_k , u^i ] \varphi_i \varphi^k : .
  \end{displaymath}
\noindent Then
\begin{equation}
    \label{eq:5.14}
    ( k + h^\vee ) L = d_{(0)} ( \underset{i \in S_{> 0}}{\sum} (-1)^{p(i)} : \varphi_i u^i :  ) + P_0 .
\end{equation}
\begin{proof}
 By (\ref{eq:3.2}) we have 
 \begin{equation}
     \label{eq:5.15}
     ( k + h^\vee ) L = \frac{1}{2} \underset{j \in S}{\sum} : u^j u_j : + ( k + h^\vee ) \partial x + ( k + h^\vee ) L^{\ch} + ( k + h^\vee ) L^{\ne} .
 \end{equation}
 \noindent Choosing, as usual, dual bases $ \{ h_i \} $ and $ \{ h^i \}$, $ i = 1 , \cdots , l $, of $\fh$ and root vectors $ \{ e_\alpha \}_{\alpha \in \Delta_{+}}, \{ e_{-\alpha} \}_{\alpha \in \Delta_{+}}$ of $\fg$, where
 $( e_\alpha  | e_{- \alpha}) = 1 $, we obtain, using quasicommutativity of
 the normally ordered product, that the first term in the RHS of (\ref{eq:5.15}) is 
\begin{equation}
\label{eq:5.16}
\underset{\alpha \in \Delta_{+}}{\sum} (-1)^{p(\alpha)} : e_{\alpha}
e_{ - \alpha} : + \frac{1}{2} \sum^{l}_{i = 1} h^i h_{i} - \partial \rho = \underset{i \in S_{> 0}}{\sum} (-1)^{p(i)} : u_i u^i : + \frac{1}{2} \underset{i \in S_{0}}{\sum} : u^i u_i : - \partial \rho_{> 0} .
\end{equation}
\noindent We also have 
\begin{equation}
    \label{eq:5.17}
    \partial x^{\ch} = \underset{i \in S_{}> 0}{\sum} (-1)^{p(i)} \ m_i \ \partial ( : \varphi_i \varphi^i :) \ .
\end{equation}
Using
(\ref{eq:5.16}) and (\ref{eq:5.17}), equation (\ref{eq:5.15})
can be rewritten as follows:
\begin{equation}
    \label{eq:5.18}
    \begin{split}
    & ( k + h^\vee ) L = \underset{i \in S_{> 0}}{\sum} (-1)^{p(i)} : u_i u^i : + \frac{1}{2} \underset{i \in S_{0}}{\sum} : u^i u_i : - \partial \rho_{> 0} \\
    & + ( k + h^\vee )\partial J^{(x)} - ( k + h^\vee ) \underset{i \in S_{> 0}}{\sum} (-1)^{p(i)} : \varphi_i \partial \varphi^i : + \frac{ k + h^{\vee} }{2} \underset{i \in S_{1/2}}{\sum} : (\partial \Phi^i) \Phi_i : \ .
    \end{split}
\end{equation}

Next, we compute $d_{(0)} (: \varphi_i u^i : )$, $ i \in S_{> 0}$, using (\ref{eq:2.6}) and that $d_{(0)}$ is an odd derivation of the normally ordered product:
\begin{displaymath}
d_{(0)} ( : \varphi_{i} u^{i} : ) = : u_{i} u^{i} : + \underset{j , k \in S_{> 0}}{\sum} (-1)^{p(k)} c^{k}_{i j} : \varphi_{k} \varphi^{j} u^{i} : + ( f  |  u_{i} ) u^i
\end{displaymath}
\begin{displaymath}
+ (-1)^{p(i)} : \Phi_{u_i} [u^{(i)} , f] : - k : \varphi_{i} \partial\varphi^{i} : - \underset{k \in S_{> 0}}{\sum} : [ u_k , u^i ] \varphi_i \varphi^k : \ .
\end{displaymath}
We have used for the 3-rd term in the RHS that $ (f  |  u_i ) = 0 $ if $p(i) \neq 0 $, and formula (\ref{eq:5.5}) for the 4-th term.
It follows that 
\begin{equation}
  \label{eq:5.19}
  \begin{split}
& \underset{i \in S_{> 0}}{\sum} (-1)^{p(i)} d_{(0)} ( : \varphi_{i} u^{i} : ) = \underset{i \in S_{> 0}}{\sum} (-1)^{p(i)} : u_i u^i : + \underset{i , j , k \in S_{> 0}}{\sum} (-1)^{p(i) + p(k)} c^{k}_{i , j} : \varphi_{k} \varphi^{j} u^{i} :\\
        & + f \ - \underset{i \in S_{1/2}}{\sum} (-1)^{p(i)} : \Phi^{i} [ u_i , f ] :  \\
        & - \underset{i , k \in S_{> 0}}{\sum} (-1)^{p(i)} : [ u_k , u^i ] \varphi_i \varphi^k : - k \underset{i \in S_{> 0}}{\sum} (-1)^{p(i)} : \varphi_i \partial \varphi^i : \ .
    \end{split}
\end{equation}
We have used for the 3-rd term in the RHS that $f = \underset{i \in S_{> 0}}{\sum} (-1)^{p(i)} ( f  |  u_{i}) u^i  $ , and for the 4-th term that 
\begin{displaymath}
\underset{i \in S_{1/2}}{\sum} : \Phi_{i} [ u^{(i)} , f  ] : = \underset{i \in S_{1/2}}{\sum} (-1)^{p(i)} d_{(0)} ( : \varphi_i u^i : ).
\end{displaymath}
Therefore $(k+h^\vee)L - \sum_{i\in S_{>0}}(-1)^{p(i)}d_{(0)}(:\varphi_iu^i:)$ 
is the difference of the right hand sides of equations (\ref{eq:5.18}) and (\ref{eq:5.19}), which is $P_{0}$ \ .
\end{proof}
\end{lemma}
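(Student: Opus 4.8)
The plan is to verify the identity (5.14) by a direct computation in the vertex algebra $\C^k(\fg,x,f)$: expand the left-hand side $(k+h^\vee)L$ from the explicit definition (3.2) of $L$, expand the $d_{(0)}$-exact term on the right-hand side using the derivation formulas (2.6), and check that their difference is exactly $P_0$, term by term. No homological input is required; the whole argument is $\lambda$-bracket calculus together with careful normal-ordering bookkeeping.

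First I would expand $(k+h^\vee)L$. Writing $L=L^{\fg}+\partial x+L^{\ch}+L^{\ne}$ from (3.2) multiplies out into the Casimir term $\tfrac12\sum_{j\in S}:u^ju_j:$, the term $(k+h^\vee)\partial x$, and the two fermionic pieces $(k+h^\vee)L^{\ch}$ and $(k+h^\vee)L^{\ne}$, as recorded in (5.15). The only delicate point is the Casimir: choosing bases adapted to the triangular decomposition, namely dual bases $\{h_i\},\{h^i\}$ of $\fh$ and root vectors $e_{\pm\alpha}$ with $(e_\alpha|e_{-\alpha})=1$, and applying quasicommutativity of the normally ordered product to reorder $:e_{-\alpha}e_\alpha:$ into $:e_\alpha e_{-\alpha}:$ generates the correction $-\partial\rho$, which, since only the positive part survives, becomes $-\partial\rho_{>0}$; this yields (5.16). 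Rewriting $\partial x$ as $\partial J^{(x)}$ minus the charged tail $\partial x^{\ch}$ via (2.7) and (5.17), and expanding $L^{\ch}$ and $L^{\ne}$, I obtain the reorganized expression (5.18).

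Next I would compute the exact term. Since $d_{(0)}$ is an odd derivation of the normally ordered product, for each $i\in S_{>0}$ one expands $d_{(0)}(:\varphi_iu^i:)$ by substituting the generator formulas (2.6) for $d_{(0)}\varphi_i$ and $d_{(0)}u^i$. This produces the diagonal term $:u_iu^i:$, a cubic charged-fermion term carrying the structure constants $c^k_{ij}$, the scalar term $(f|u_i)u^i$ (nonzero only for even $i$), a neutral-fermion term $(-1)^{p(i)}:\Phi_{u_i}[u^{(i)},f]:$, where $u^i$ is rewritten as $[u^{(i)},f]$ using (5.5) (legitimate because $\Phi_{u_i}$ vanishes unless $i\in S_{1/2}$), together with the terms $-k:\varphi_i\partial\varphi^i:$ and $-\sum_k:[u_k,u^i]\varphi_i\varphi^k:$ coming from $d_{(0)}u^i$. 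Summing against $(-1)^{p(i)}$, using $f=\sum_{i\in S_{>0}}(-1)^{p(i)}(f|u_i)u^i$ to collapse the scalar contributions into $f$ and the dual-basis relations in the neutral sector to bring the $\Phi$-term into the displayed form, gives (5.19).

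Finally I would subtract (5.19) from (5.18). The two copies of $\sum_{i\in S_{>0}}(-1)^{p(i)}:u_iu^i:$ cancel; the charged kinetic terms combine with coefficient $-(k+h^\vee)+k=-h^\vee$; the cubic and $[u_k,u^i]$ charged-fermion terms, the $S_0$-Casimir $\tfrac12\sum_{i\in S_0}:u^iu_i:$, and the $-\partial\rho_{>0}$ and $(k+h^\vee)\partial J^{(x)}$ pieces match $P_0$ verbatim; and the neutral term becomes $-\sum_{i\in S_{1/2}}(-1)^{p(i)}:\Phi^i[f,u_i]:$ after using $[u_i,f]=-[f,u_i]$. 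I expect the genuine obstacle to be the neutral kinetic term and its signs: the surviving $\tfrac{k+h^\vee}{2}\sum_{i\in S_{1/2}}:(\partial\Phi^i)\Phi_i:$ from $(k+h^\vee)L^{\ne}$ must be converted into the form $-\tfrac{k+h^\vee}{2}\sum_{i\in S_{1/2}}:\Phi^i\partial\Phi_i:$ displayed in $P_0$. This uses quasicommutativity (the correction is a multiple of $\partial^2 1=0$, since the relevant $\lambda$-bracket is linear in $\lambda$) to obtain $\sum_i:(\partial\Phi^i)\Phi_i:=-\sum_i:\Phi_i\partial\Phi^i:$, followed by the observation that $\langle\,,\,\rangle^{\ne}$ is symmetric on the odd space $\fg_{1/2}$, which permits the relabeling $\sum_i:\Phi_i\partial\Phi^i:=\sum_i:\Phi^i\partial\Phi_i:$. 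With this identity in hand, the difference of (5.18) and (5.19) is precisely $P_0$, establishing (5.14).
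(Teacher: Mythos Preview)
Your argument is correct and follows exactly the paper's route: rewrite $(k+h^\vee)L$ in the form (5.18) via the root-space decomposition and quasicommutativity, compute the $d_{(0)}$-exact term as (5.19) using the derivation formulas (2.6) and the identification (5.5), and subtract to read off $P_0$. You actually supply more detail than the paper on the neutral-fermion identity $\sum_i:(\partial\Phi^i)\Phi_i:=-\sum_i:\Phi^i\partial\Phi_i:$; just note that your relabeling step must also cover the even part of $\fg_{1/2}$, where $\langle\,,\,\rangle^{\ne}$ is skew rather than symmetric, but the extra sign there cancels against the different quasicommutativity sign, so the conclusion still holds.
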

\begin{lemma}
  \label{lem:5.6}
  We have
  \begin{displaymath}
  P_0 \ = - J^{ \{ f \} } + P_1 \ ,
  \end{displaymath}
  \noindent where
  \begin{displaymath}
  P_1 \ = \underset{i , j \in S_{> 0}}{\sum} (-1)^{p (i) } ( f  |  [u_j , u^i]) : \varphi_i \varphi^j : + \underset{i , j \in S_{> 0}}{\sum} (-1)^{p(j)} : \Phi_{[u_j , u^i]} \varphi_i \varphi^j : 
  \end{displaymath}
  \begin{displaymath}
  - \frac{1}{2} \underset{i ,j ,k. \ell \in S_{> 0}}{\sum} (-1)^{p(i) + p(k)} ([u_\ell , u^k]  |  p_0 [u_j , u^i]) : \varphi_i \varphi^j \varphi_k \varphi^{\ell}:
  \end{displaymath}
  \begin{displaymath}
  - \frac{1}{2} \underset{i ,j ,k \in S_{> 0}}{\sum} (-1)^{p(i)} (u_j | [u^k , p_0 [u_k , u^i]]) (: \partial \varphi_i \varphi^j : - : \varphi_i \partial \varphi^j :)
  \end{displaymath}
  \begin{displaymath}
  + \underset{i ,j \in S_{> 0}}{\sum} (-1)^{p(i)} (\rho_{> 0} | [u_j , u^i]) \partial : \varphi_i \varphi^j :
  \end{displaymath}
  \begin{displaymath}
    - h^{\vee} \underset{i \in S_{> 0}}{\sum} (-1)^{p(i)} \ : \varphi_i \partial \varphi^i \ + \underset{i ,j \in S_{> 0}}{\sum} (-1)^{p(i)}
    : p_{> 0} ([u_j , u^i]) \varphi_i \varphi^j :  .
  \end{displaymath}
Consequently, by (\ref{eq:5.14}), we have 
\begin{equation}
    \label{eq:5.20}
    (k + h^{\vee}) L + J^{ \{ f \}} \ \equiv \ P_1 \mod \Im d_{(0)}  .  
\end{equation}
\begin{proof}
 First, we compute, using Lemma \ref{lem:5.1},
 \begin{equation}
     \label{eq:5.21}
     \begin{split}
         & \underset{i \in S_{0}}{\sum} : J^{(u^i)} J^{(u_i)} : \ = \underset{i \in S_0}{\sum} : u^i u_i : +2 \underset{i , j \in S_{> 0}}{\sum} (-1)^{p(i)} : p_0 ([u_j , u^i]) \varphi_i \varphi^j : \\
         & + \underset{i,j,k, \ell \in S_{> 0}}{\sum} (-1)^{p(i) + p(k)} ([ u_{\ell} , u^k ]  |  p_0 [ u_j , u^i ]) : \varphi_i \varphi^j \varphi_k \varphi^{\ell} : \\
         & + \underset{i,j, k \in S_{> 0}}{\sum} (-1)^{p(i)} (u_j  | [ u^k , p_0 [u_k , u^i]]) (: \partial \varphi_i \varphi^j : - : \varphi_i \partial \varphi^j :)  .
     \end{split}
 \end{equation}
 Hence, for $P_0$, defined in Lemma (\ref{eq:5.5}), and $J^{ \{ f \}}$, defined in Theorem \ref{th:3.1}, we have 
  \begin{equation}
     \label{eq:5.22}
     P_0 + J^{ \{ f \} } \ = A  ,
 \end{equation}
  \noindent where
  \begin{equation}
     \label{eq:5.23}
     \begin{split}
         & A \ = (f^{\ch} + \underset{i \in S_{1/2}}{\sum} (-1)^{p(i)} : \Phi^i [f , u_i]^{\ch} : ) + \frac{1}{2} \underset{i \in S_{0}}{\sum} (: u^i u_i : - : J^{(u^{i})} J^{(u_i)} : ) \\
         & + \underset{i ,j \in S_{> 0}}{\sum} (-1)^{p(i)} (\rho_{> 0}| [u_j , u^i] ) \partial: \varphi_i \varphi^j: - h^{\vee} \underset{i \in S_{> 0}}{\sum} (-1)^{p(i)} : \varphi_i \partial \varphi^i : \\
         & + \underset{i , j \in S_{> 0}}{\sum} (-1)^{p(i)} : p_{\geq 0} ([u_j , u^i]) \varphi_i \varphi^j:  .
     \end{split}
 \end{equation}
  
  \noindent Here we used Lemma \ref{lem:5.4} for the first term, formula (\ref{eq:5.21}) for the second term and formula (\ref{eq:5.12}) for the last term.
 
 From (\ref{eq:5.23}) it is straightforward to deduce that $A = P_1$.
  This completes the proof of Lemma \ref{eq:5.6}.
 \end{proof}
\end{lemma}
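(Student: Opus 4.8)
The plan is to prove the stated identity $P_0 = -J^{\{f\}}+P_1$ as a purely algebraic equality in $\C^k(\fg,x,f)$ by adding the explicit expression for $P_0$ from Lemma~\ref{lem:5.5} to the expression for $J^{\{f\}}$ from Theorem~\ref{th:3.1} term by term, and then to read off the congruence (\ref{eq:5.20}) directly from (\ref{eq:5.14}). First I would record the two immediate cancellations: the summands $\pm(k+h^\vee)\partial J^{(x)}$ and $\mp\frac{k+h^\vee}{2}\sum_{j\in S_{1/2}}:\Phi^j\partial\Phi_j:$ occur with opposite signs in $P_0$ and in $J^{\{f\}}$, so they drop out of the sum $P_0+J^{\{f\}}$. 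What then remains must be shown to reorganize into the six summands defining $P_1$.

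The next step is to split every building block $J^{(v)}$ occurring in the sum into its linear part $v$ and its charged-fermion tail $v^{\ch}$, using (\ref{eq:2.7}) and the closed form (\ref{eq:5.4}). Three such splittings carry out most of the bookkeeping: the constant $-f$ in $P_0$ combines with $J^{(f)}=f+f^{\ch}$ to leave $f^{\ch}$, which by (\ref{eq:5.13}) is the first summand of $P_1$; the term $-\sum_{i\in S_{1/2}}(-1)^{p(i)}:\Phi^i[f,u_i]:$ in $P_0$ cancels the linear part of $\sum_{j\in S_{1/2}}(-1)^{p(j)}:\Phi^j J^{([f,u_j])}:$ in $J^{\{f\}}$, leaving $\sum_{j\in S_{1/2}}(-1)^{p(j)}:\Phi^j[f,u_j]^{\ch}:$, which Lemma~\ref{lem:5.4} identifies with the second summand of $P_1$; and $-\partial\rho_{>0}$ in $P_0$ combines with $\partial J^{(\rho_{>0})}$ to leave $\partial(\rho_{>0})^{\ch}$, the $\rho_{>0}$-summand of $P_1$, again via (\ref{eq:5.4}). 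The two cubic terms $-\sum(-1)^{p(i)+p(k)}c^k_{ij}:\varphi_k\varphi^j u^i:+\sum(-1)^{p(i)}:[u_k,u^i]\varphi_i\varphi^k:$ surviving in $P_0$ are exactly the left-hand side of (\ref{eq:5.12}) in Lemma~\ref{lem:5.2}, hence collapse to $\sum(-1)^{p(i)}:p_{\geq 0}([u_j,u^i])\varphi_i\varphi^j:$, while $-h^\vee\sum(-1)^{p(i)}:\varphi_i\partial\varphi^i:$ carries over unchanged.

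The heart of the argument is the quadratic term $\frac12\sum_{i\in S_0}(:u^iu_i:-:J^{(u^i)}J^{(u_i)}:)$, built from the $+\frac12\sum:u^iu_i:$ in $P_0$ and the $-\frac12\sum:J^{(u^j)}J^{(u_j)}:$ in $J^{\{f\}}$. Here I would expand $:J^{(u^i)}J^{(u_i)}:$ by splitting each factor as above; the pure linear part $:u^iu_i:$ cancels, the two cross terms are evaluated by (\ref{eq:5.9}) of Lemma~\ref{lem:5.1}, and the tail-tail product $:(u^i)^{\ch}(u_i)^{\ch}:$ must be normally reordered via quasiassociativity, which is precisely what (\ref{eq:5.8}) and (\ref{eq:5.10}) of Lemma~\ref{lem:5.1} package. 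This produces a four-fermion term weighted by $([u_\ell,u^k]\,|\,p_0[u_j,u^i])$, a derivative term weighted by $(u_j\,|\,[u^k,p_0[u_k,u^i]])$, and a quadratic term proportional to $\sum(-1)^{p(i)}:p_0([u_j,u^i])\varphi_i\varphi^j:$. After the overall factor $-\frac12$ the first two become the third and fourth summands of $P_1$, while the quadratic term becomes $-\sum(-1)^{p(i)}:p_0([u_j,u^i])\varphi_i\varphi^j:$, which exactly annihilates the $p_0$-part of the $p_{\geq 0}$-term coming from (\ref{eq:5.12}), leaving only $p_{>0}$, the last summand of $P_1$. I expect this $p_0$-versus-$p_{\geq 0}$ reconciliation, together with keeping the Koszul signs $(-1)^{p(i)+p(k)}$ and the normal-ordering corrections (\ref{eq:5.6})--(\ref{eq:5.8}) mutually consistent, to be the only genuinely error-prone step; everything else is linear matching of labeled terms. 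Once $P_0+J^{\{f\}}=P_1$ is in hand, substituting into (\ref{eq:5.14}) gives $(k+h^\vee)L=d_{(0)}\big(\sum_{i\in S_{>0}}(-1)^{p(i)}:\varphi_i u^i:\big)+P_0$, whence $(k+h^\vee)L+J^{\{f\}}=d_{(0)}(\cdots)+P_1$, which is the congruence (\ref{eq:5.20}).
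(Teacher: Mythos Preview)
Your proposal is correct and follows essentially the same route as the paper: both arguments split each $J^{(v)}$ into $v+v^{\ch}$, invoke (\ref{eq:5.13}) and Lemma~\ref{lem:5.4} for the $f$- and $\Phi$-terms, use (\ref{eq:5.12}) to collapse the two cubic summands, and appeal to Lemma~\ref{lem:5.1} (via (\ref{eq:5.9})--(\ref{eq:5.10})) to expand $\sum_{i\in S_0}:J^{(u^i)}J^{(u_i)}:$. Your write-up is in fact more explicit than the paper's at the final step---the paper packages everything into the intermediate expression $A$ of (\ref{eq:5.23}) and then declares ``it is straightforward to deduce that $A=P_1$,'' whereas you spell out the $p_0$-versus-$p_{\geq 0}$ cancellation that makes this work.
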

\begin{lemma}
  \label{lem:5.7}
  Let
  \begin{displaymath}
    P_2 \ = - \frac{1}{2} \underset{i ,j ,k \in S_{> 0}}{\sum} (-1)^{p(i)}
    (u_j  |  [u^k , p_0 [u_k , u^i]] ) (: (\partial \varphi_i) \varphi^j : - : \varphi_i \partial \varphi^j :)
  \end{displaymath}
  \begin{displaymath}
  + \underset{i ,j \in S_{> 0}}{\sum} (-1)^{p(i)} (\rho_{> 0}  |  [u_j , u^i] ) \partial (: \varphi_i \varphi^j:) - h^{\vee} \underset{i \in S_{> 0}}{\sum} (-1)^{p(i)} : \varphi_i \partial \varphi^i :
  \end{displaymath}
  \begin{displaymath}
  - \frac{1}{2} \underset{i ,j ,k \in S_{> 0}}{\sum} (-1)^{p(i)} (u_j | [u^k , p_{< 0} [u_k , u^i]]) : (\partial \varphi_i) \varphi^j :
  \end{displaymath}
  \begin{displaymath}
  + \underset{i ,j ,k \in S_{> 0}}{\sum} (-1)^{p(i)} (u_j  |  [u^k , p_{> 0} [u_k , u^i]]) : \varphi_i \partial \varphi^j :  .
  \end{displaymath}
  Then
  \begin{displaymath}
  P_2 \ = P_1 - \frac{1}{2} d_{(0)} \underset{i ,j \in S_{> 0}}{\sum}(-1)^{p(j)} : \varphi_{[u_{i} , u^{j}]} \varphi_j \varphi^i :  .
  \end{displaymath}
  \noindent Consequently, by (\ref{eq:5.20}) , we have 
  \begin{displaymath}
  (k + h^{\vee}) L + J^{ \{ f \} } \ \equiv \ P_2 \mod \Im d_{(0)}  .
  \end{displaymath}
  \begin{proof}
   It is similar to that of Lemma \ref{lem:5.6}, and therefore is omitted  .
  \end{proof}
  \end{lemma}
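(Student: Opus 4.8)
The plan is to prove the identity $P_2 = P_1 - \tfrac{1}{2}\,d_{(0)}C$, where $C := \sum_{i,j\in S_{>0}}(-1)^{p(j)}:\varphi_{[u_i,u^j]}\varphi_j\varphi^i:$ is the cubic fermionic expression appearing in the statement, by a direct computation entirely parallel to the proof of Lemma \ref{lem:5.6}: this is an honest equality of elements of $\C^k(\fg,x,f)$, no homology being involved, so one simply computes $d_{(0)}C$ and checks that it equals $2(P_1-P_2)$. The concluding congruence then follows formally, since by Lemma \ref{lem:5.6} we already know $(k+h^\vee)L + J^{\{f\}} \equiv P_1 \mod \Im d_{(0)}$ and $P_1 - P_2 = \tfrac{1}{2}\,d_{(0)}C \in \Im d_{(0)}$, so transitivity gives $(k+h^\vee)L + J^{\{f\}} \equiv P_2 \mod \Im d_{(0)}$.

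First I would apply $d_{(0)}$, an odd derivation of the normally ordered product, to $C$, expanding each of the three factors of $:\varphi_{[u_i,u^j]}\varphi_j\varphi^i:$ via the formulas (\ref{eq:2.6}). The factor $d_{(0)}\varphi^i = -\tfrac{1}{2}\sum c^i_{ks}:\varphi^k\varphi^s:$ produces the terms quartic in the charged fermions, while $d_{(0)}\varphi_{[u_i,u^j]}$ and $d_{(0)}\varphi_j$ each split into a $\fg_{>0}$-valued piece (from $p_{>0}$), a scalar piece (from $(\cdot\,|\,f)$, surviving only on the $\fg_1$-component), a neutral-fermion piece $\Phi_{(\cdot)}$ (surviving only on the $\fg_{1/2}$-component), and one more quartic-in-$\varphi$ piece of the shape $\sum_k:\varphi^k\varphi_{[u_k,\,p_{>0}(\cdot)]}:$.

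Next I would normal-order every resulting term using quasicommutativity and quasiassociativity --- precisely the rearrangements recorded in (\ref{eq:5.6})--(\ref{eq:5.8}) and in Lemma \ref{lem:4.4}, which is exactly what generates the derivative terms $:(\partial\varphi_i)\varphi^j:$ and $:\varphi_i\partial\varphi^j:$ --- and simplify the structure-constant coefficients using invariance of $(.\,|\,.)$, the dual-basis identities (\ref{eq:5.1})--(\ref{eq:5.3}), and the auxiliary computations of Lemmas \ref{lem:5.1}, \ref{lem:5.2}, \ref{lem:5.4} together with (\ref{eq:5.4}), (\ref{eq:5.13}) and Lemma \ref{lem:4.2}. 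Collecting by type, one should recover $2(P_1-P_2)$: the $(\cdot\,|\,f)$-pieces assemble into the term $\sum(-1)^{p(i)}(f\,|\,[u_j,u^i]):\varphi_i\varphi^j:$ of $P_1$, the $\Phi$-pieces into its $\sum(-1)^{p(j)}:\Phi_{[u_j,u^i]}\varphi_i\varphi^j:$ term, the $\fg_{>0}$-valued pieces into its $\sum(-1)^{p(i)}:p_{>0}([u_j,u^i])\varphi_i\varphi^j:$ term, the quartic pieces --- after a Jacobi-identity manipulation and a symmetrization of a repeated summation index, in the spirit of Lemma \ref{lem:5.3} --- into its $p_0$-projected quartic term, and the $\partial$-corrections from normal ordering into precisely the two derivative terms of $P_2$ carrying the projections $p_{<0}$ and $p_{>0}$, while the $\rho_{>0}$-, $h^\vee$- and $p_0$-double-derivative terms, being common to $P_1$ and $P_2$, drop out of the difference.

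I expect the main obstacle to be bookkeeping rather than any conceptual issue. The delicate point is the appearance in $P_2$ of the $p_{<0}$ and $p_{>0}$ projections inside the derivative terms: after $d_{(0)}$ is applied, only $p_{>0}$ occurs explicitly (through the $\varphi_{[u_k,\,p_{>0}(\cdot)]}$ in $d_{(0)}\varphi_a$), and the complementary $p_{<0}$ part materializes only once a projection has been transferred across $(.\,|\,.)$, which interchanges $p_{>0}$ and $p_{<0}$. Keeping all the parity signs $(-1)^{p(\cdot)}$ coherent through the many relabelings of indices over $S_{>0}$, and making the overall coefficient $-\tfrac{1}{2}$ and the quartic-term symmetrization come out exactly, is the crux --- just as it was for the identity $A = P_1$ at the end of the proof of Lemma \ref{lem:5.6}.
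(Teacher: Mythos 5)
Your proposal is correct and follows exactly the route the paper intends: the paper's own proof of Lemma~\ref{lem:5.7} is omitted with the remark that it is ``similar to that of Lemma~\ref{lem:5.6}'', i.e.\ a direct computation of $d_{(0)}$ applied to the cubic fermionic expression $\sum_{i,j\in S_{>0}}(-1)^{p(j)}:\varphi_{[u_i,u^j]}\varphi_j\varphi^i:$ via (\ref{eq:2.6}), followed by normal-ordering through (\ref{eq:5.6})--(\ref{eq:5.8}) and collection of terms, which is precisely what you outline. Your term-by-term accounting --- which components of $d_{(0)}\varphi_a$ produce which pieces of $P_1-P_2$, that the $\rho_{>0}$-, $h^\vee$- and $p_0$-derivative terms drop out of the difference, and that the congruence then follows formally from (\ref{eq:5.20}) --- is consistent with the stated formulas.
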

  \begin{lemma}
    \label{lem:5.8}
    Let
    \begin{displaymath}
    \varphi = \underset{i ,j \in S_{>0} }{\sum} (a_{ij} : (\partial \varphi_i) \ \varphi^j : \ + b_{ij} \ : \varphi_i \ \partial \varphi^j :)  ,
    \end{displaymath}
    where $a_{ij} , b_{ij}  \in \mathbb{F}$.
    \noindent Then $d_{(0)} \varphi = 0$ implies that $\varphi = 0$.
    \begin{proof}
     It is clear from (\ref{eq:2.6}).
    \end{proof}
\end{lemma}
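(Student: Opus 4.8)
The plan is to exploit that $\C^k(\fg,x,f)\cong V^k(\fg)\otimes F^{\ch}\otimes F^{\ne}$ is \emph{freely} generated, as a vertex algebra, by the elements of $\fg$, the $\varphi_i,\varphi^i$ ($i\in S_{>0}$) and the $\Phi_i$ ($i\in S_{1/2}$), so that it admits a PBW-type basis of normally ordered monomials in these generators and their $\partial$-derivatives; fix such a basis, ordering the generators so that those coming from $\fg$ precede those of $F^{\ch}$. First I would compute $d_{(0)}\varphi$ from (\ref{eq:2.6}), using that $d_{(0)}$ is an odd derivation of the normally ordered product and commutes with $\partial$. The crucial observation supplied by (\ref{eq:2.6}) is that the only part of $d_{(0)}\varphi_i$ landing in the subalgebra $V^k(\fg)$ is the leading term $u_i$; each of the remaining terms it produces --- the scalar $(f|u_i)1$, the neutral term $(-1)^{p(i)}\Phi_{u_i}$, and the quadratic term $\sum_r{:}\varphi^r\varphi_{[u_r,u_i]}{:}$ --- and likewise $d_{(0)}\varphi^j$, which is quadratic in the $\varphi^r$'s, strictly raises the fermionic content of every monomial it contributes.

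Next I would project $d_{(0)}\varphi$ onto the span of those basis monomials that are a normally ordered product of one element of $\fg$ (possibly $\partial$-differentiated) with one $\varphi^j$ (possibly $\partial$-differentiated), carrying no other fermion. By the previous paragraph this projection equals
\[
\sum_{i,j\in S_{>0}} a_{ij}\,{:}(\partial u_i)\varphi^j{:}\ +\ \sum_{i,j\in S_{>0}} b_{ij}\,{:}u_i(\partial\varphi^j){:},
\]
the first sum arising through $d_{(0)}(\partial\varphi_i)$ and the second through $d_{(0)}\varphi_i$, with no contribution from $d_{(0)}\varphi^j$ or from the fermionic pieces of $d_{(0)}\varphi_i$. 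The monomials ${:}(\partial u_i)\varphi^j{:}$ and ${:}u_i(\partial\varphi^j){:}$, for $i,j\in S_{>0}$, are pairwise distinct members of the chosen PBW basis --- the first carries the $\partial$ on its $\fg$-tensor-factor, the second on its $F^{\ch}$-tensor-factor --- hence linearly independent of one another and of all the remaining terms of $d_{(0)}\varphi$. Therefore $d_{(0)}\varphi=0$ forces $a_{ij}=b_{ij}=0$ for all $i,j\in S_{>0}$, i.e.\ $\varphi=0$.

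I expect the only real obstacle to be the bookkeeping in the second step: one must be certain that the displayed ``one $\fg$-generator, one $\varphi^j$'' component of $d_{(0)}\varphi$ receives no hidden contribution --- in particular that applying $d_{(0)}$ to ${:}\varphi_i\,\partial\varphi^j{:}$ never feeds back into it via the $\Phi$-term or the quadratic term of $d_{(0)}\varphi_i$, nor via $d_{(0)}\varphi^j$. A direct inspection of (\ref{eq:2.6}) settles this, since every one of those terms manifestly raises the number of fermion generators in each resulting monomial; this is exactly the sense in which the assertion is an immediate consequence of (\ref{eq:2.6}).
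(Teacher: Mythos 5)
Your argument is correct and is precisely the elaboration of the paper's one-line proof (``clear from (\ref{eq:2.6})''): project $d_{(0)}\varphi$ onto the PBW sector spanned by monomials with exactly one $\fg$-generator and one $\varphi^j$, where only the leading terms $:(\partial u_i)\varphi^j:$ and $:u_i\partial\varphi^j:$ survive, forcing $a_{ij}=b_{ij}=0$. One small imprecision: the scalar term $(f|u_i)1$ in $d_{(0)}\varphi_i$ does \emph{not} raise fermionic content (it yields $(f|u_i)\partial\varphi^j$ from $:\varphi_i\partial\varphi^j:$), but this is harmless since that contribution carries no $\fg$-factor and therefore lies outside the sector onto which you project.
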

     Now it is easy to complete the proof of Theorem \ref{th:3.2}. By Lemma \ref{lem:5.7}, $(k + h^{\vee}) L + J^{ \{ f \} } \ \equiv \ P_2 \mod \Im d_{(0)}$ , where $d_{(0)}  P_2  = 0$ since $d_{(0)} L = 0 = d_{(0)} J^{ \{ f \} }$. But $P_2$ has the form of $\varphi$ in Lemma \ref{lem:5.8}, hence $P_2 \ = 0$.



\section{ { Examples }}
\label{sec:6}

\noindent \textbf{6.1 Minimal W-algebras.} Let $\theta\in \fh^* =\fh$ be the highest root for some ordering of roots of the Lie superalgebra $\fg$.
The W-algebra $W^{k} (\fg, \theta)$ is called a {\it minimal} W-algebra \cite{KRW}, \cite{KW} if the  $\frac{1}{2} \mathbb{Z}$-grading (\ref{eq:1.1}) has the form
\begin{displaymath}
  \fg  =  \oplus_{j = -1}^{1}  \fg_{j} ,\, \text{where} \ \fg_{-1}  =  \mathbb{F}  e_{- \theta} .
\end{displaymath}
In this case $f = e_{- \theta}$ lies in the non-zero nilpotent orbit of minimal dimension in one of the simple components of
$\fg_{\overline{0}}$. 
Conversely, if $f$ lies in the non-zero orbit of minimal dimension in a simple component of $\fg_{\overline{0}}$, then the corresponding $W$-algebra is a minimal $W$-algebra in all cases, except when $\fg=osp(3|n)$ and the simple component of  $\fg_{\overline{0}}$ is $so_3$. 
Minimal $W$-algebras were studied in detail in \cite{KRW} and \cite{KW}.

Obviously, for a minimal $W$-algebra, $\rho_{1} = x$, and it follows from \cite{KW}, formulas (5.6), (5.11), that $\rho_{1/2} = (h^{\vee} - 2)x$.  Hence,
\begin{equation}
    \label{eq:6.1}
    \rho_{> 0} \ = \ (h^{\vee} - 1)x.
\end{equation}
Therefore, $\rho_{> 0} - (k + h^{\vee})x \ = \ -(k + 1)x$, and the FFR, given by Corollary \ref{cor:3.1}, coincides with that, given by \cite{KW}, Theorem 5.3.
\vspace{3mm}

\noindent \textbf{6.2 Principal W-algebras.} Let $\{e_*, \rho^\vee, f_*\}$ be
  a principal $sl_2$-triple, where
$x = \rho^{\vee}$ is the half of the sum of positive coroots of
$\fg_{\overline{0}}$. Then the datum $(\fg, \rho^\vee, f_*, k)$ is a Dynkin datum.
The corresponding W-algebra $W^{k} (\fg, \rho^{\vee}, f_*)$ is called the \textit{principal} W-algebra, associated to $\fg$.

If $\fg$ is a Lie algebra, then $\fg_{\pm 1/2} = 0$ and $\fg_0 = \fh$, and therefore
\begin{equation}
  \label{eq:6.2}
  \rho_{> 0}  =  \rho\, (\in \fh^{*}  =  \fh) ,\,\,\hbox{and}\,\, \fg^{f_*}_0=0,
\end{equation}
where $\rho$ is the half of the sum of positive roots of $\fg$.
Hence the FFR in this case is a homomorphism
$W^{k} (\fg, \rho^{\vee}, f_*) \to V^{B_{0}} (\fh)$,
for which 
\begin{equation}
  \label{eq:6.3}
L \ \mapsto \ \frac{1}{2(k + h^{\vee})}  \underset{j \in S_{0}}{\sum}  :  u^{j} u_{j}  :  +  \partial \rho^\vee - \frac{1}{k+h^\vee}\partial \rho.
\end{equation}

The principal $W$-algebras for arbitrary simple Lie algebras were first constructed in \cite{FF}.

The element $2x$ is determined by its Dynkin labels
$2\alpha_i(x),\, i=1,...,\rank \,\fg$, which are known to take values
$0, 1$, and $2$. In the case when $\fg$ is  a simple Lie algebra
all the Dynkin labels of $2\rho^\vee$ are equal to 2.

Let now $\fg$ be a basic Lie superalgebra, which is not a Lie algebra.
Then $\fg$ may have several non-isomorphic sets of simple roots, and the
Dynkin diagrams, corresponding to the choices of positive roots, compatible
with the grading (\ref{eq:1.1}) may be different. Below we list the Dynkin labels $2\alpha_i(\rho^\vee),\, i=1,..., \rank\, \fg$, for all basic Lie superalgebras
$\fg$, which are not Lie algebras.
For exceptional Lie superalgebras $\fg$ they can be found in \cite{H2}. We use notation for basic Lie superalgebras and their Dynkin diagrams from \cite{K1}. 

\vspace{8mm}

\noindent $I$. $A(m, n), \,m > n \geq 0$, $m - n = 2k + 1$, $k \in
\mathbb{Z}_ {\geq 0}$ :

\begin{table*}[!h]
\vspace*{-15ex}$
\begin{array}{c c}
\setlength{\unitlength}{0.16in}
\begin{picture}(20,8)

\put(5,2){\makebox(0,0)[c]{2}}
\put(5,1){\makebox(0,0)[c]{$\bigcirc$}}

\put(5.5,1){\line(1,0){1.0}}

\put(7,2){\makebox(0,0)[c]{2}}
\put(7.05,1){\makebox(0,0)[c]{$\bigcirc$}}

\put(7.6,1){\line(1,0){1.0}}

\put(9.4,1){\makebox(0,0)[c]{$\cdots$}}

\put(10,1){\line(1,0){1.0}}

\put(11.52,2){\makebox(0,0)[c]{2}}
\put(11.52,1){\makebox(0,0)[c]{$\bigcirc$}}

\put(12.1,1){\line(1,0){1.0}}

\put(13.75,2){\makebox(0,0)[c]{1}}
\put(13.75,1){\makebox(0,0)[c]{$\bigotimes$}}

\put(14.3,1){\line(1,0){1.0}}

\put(15.95,2){\makebox(0,0)[c]{1}}
\put(15.95,1){\makebox(0,0)[c]{$\bigotimes$}}

\put(16.5,1){\line(1,0){1.0}}

\put(18.25,1){\makebox(0,0)[c]{$\cdots$}}

\put(18.9,1){\line(1,0){1.2}}

\put(20.75,2){\makebox(0,0)[c]{1}}
\put(20.75,1){\makebox(0,0)[c]{$\bigotimes$}}

\put(21.35,1){\line(1,0){1.0}}

\put(23,2){\makebox(0,0)[c]{2}}
\put(23,1){\makebox(0,0)[c]{$\bigcirc$}}

\put(23.5,1){\line(1,0){1.0}}

\put(25,2){\makebox(0,0)[c]{2}}
\put(25,1){\makebox(0,0)[c]{$\bigcirc$}}

\put(25.5,1){\line(1,0){1.0}}

\put(27.25,1){\makebox(0,0)[c]{$\cdots$}}

\put(28,1){\line(1,0){1.0}}

\put(29.5,2){\makebox(0,0)[c]{2}}
\put(29.5,1){\makebox(0,0)[c]{$\bigcirc$}}

\end{picture}
\end{array}$
\end{table*}
\noindent where the number of white nodes at the beginning and the end is equal to $k$,
and the number of grey nodes is $2(n + 1)$.

\vspace{5mm}

\noindent $II$. $A(m, n),\, m > n \geq 0$, $m - n = 2k$, $k \in \mathbb{Z}_{\geq 1}$ :

\begin{table*}[!h]
\vspace*{-15ex}$
\begin{array}{c c}
\setlength{\unitlength}{0.16in}
\begin{picture}(20,8)

\put(5,2){\makebox(0,0)[c]{2}}
\put(5,1){\makebox(0,0)[c]{$\bigcirc$}}

\put(5.5,1){\line(1,0){1.0}}

\put(7,2){\makebox(0,0)[c]{2}}
\put(7.05,1){\makebox(0,0)[c]{$\bigcirc$}}

\put(7.6,1){\line(1,0){1.0}}

\put(9.4,1){\makebox(0,0)[c]{$\cdots$}}

\put(10,1){\line(1,0){1.0}}

\put(11.52,2){\makebox(0,0)[c]{2}}
\put(11.52,1){\makebox(0,0)[c]{$\bigcirc$}}

\put(12.1,1){\line(1,0){1.0}}

\put(13.75,2){\makebox(0,0)[c]{0}}
\put(13.75,1){\makebox(0,0)[c]{$\bigotimes$}}

\put(14.3,1){\line(1,0){1.0}}

\put(15.95,2){\makebox(0,0)[c]{2}}
\put(15.95,1){\makebox(0,0)[c]{$\bigotimes$}}

\put(16.5,1){\line(1,0){1.0}}

\put(18.25,2){\makebox(0,0)[c]{0}}
\put(18.25,1){\makebox(0,0)[c]{$\bigotimes$}}

\put(18.9,1){\line(1,0){1.2}}

\put(20.75,2){\makebox(0,0)[c]{2}}
\put(20.75,1){\makebox(0,0)[c]{$\bigotimes$}}

\put(21.35,1){\line(1,0){1.0}}

\put(23,1){\makebox(0,0)[c]{$\cdots$}}

\put(23.6,1){\line(1,0){1.0}}

\put(25.25,2){\makebox(0,0)[c]{0}}
\put(25.25,1){\makebox(0,0)[c]{$\bigotimes$}}

\put(25.85,1){\line(1,0){1.0}}

\put(27.5,2){\makebox(0,0)[c]{2}}
\put(27.5,1){\makebox(0,0)[c]{$\bigotimes$}}

\put(28.1,1){\line(1,0){1.0}}

\put(29.75,2){\makebox(0,0)[c]{2}}
\put(29.75,1){\makebox(0,0)[c]{$\bigcirc$}}

\put(30.25,1){\line(1,0){1.0}}

\put(31.75,2){\makebox(0,0)[c]{2}}
\put(31.75,1){\makebox(0,0)[c]{$\bigcirc$}}

\put(32.25,1){\line(1,0){1.0}}

\put(34,1){\makebox(0,0)[c]{$\cdots$}}

\put(34.75,1){\line(1,0){1.0}}

\put(36.3,2){\makebox(0,0)[c]{2}}
\put(36.3,1){\makebox(0,0)[c]{$\bigcirc$}}

\end{picture}
\end{array}$
\end{table*}
\noindent where the number of white nodes at the beginning (resp. end) is equal to $k$ (resp. $k - 1$), and the number of grey nodes is $2 (n + 1)$ .

\vspace{5mm}

\noindent $III$. $A (m, m),\, m \geq 1$ :

\begin{table*}[!h]
\vspace*{-15ex}$
\begin{array}{c c}
\setlength{\unitlength}{0.16in}
\begin{picture}(20,8)

\put(5,2){\makebox(0,0)[c]{0}}
\put(5,1){\makebox(0,0)[c]{$\bigotimes$}}

\put(5.5,1){\line(1,0){1.0}}

\put(7,2){\makebox(0,0)[c]{2}}
\put(7.05,1){\makebox(0,0)[c]{$\bigotimes$}}

\put(7.6,1){\line(1,0){1.0}}

\put(9.25,2){\makebox(0,0)[c]{0}}
\put(9.25,1){\makebox(0,0)[c]{$\bigotimes$}}

\put(9.8,1){\line(1,0){1.0}}

\put(11.4,2){\makebox(0,0)[c]{2}}
\put(11.4,1){\makebox(0,0)[c]{$\bigotimes$}}

\put(11.9,1){\line(1,0){1.0}}

\put(13.75,1){\makebox(0,0)[c]{$\cdots$}}

\put(14.3,1){\line(1,0){1.0}}

\put(15.9,2){\makebox(0,0)[c]{0}}
\put(15.9,1){\makebox(0,0)[c]{$\bigotimes$}}

\put(16.4,1){\line(1,0){1.2}}

\put(18,2){\makebox(0,0)[c]{2}}
\put(18,1){\makebox(0,0)[c]{$\bigotimes$}}

\put(18.6,1){\line(1,0){1.0}}

\put(20.2,2){\makebox(0,0)[c]{0}}
\put(20.2,1){\makebox(0,0)[c]{$\bigotimes$}}

\end{picture}
\end{array}$
\end{table*}
\noindent
In all cases I - III the total number of nodes is $m + n + 1$ .


\newpage

\noindent $IV$. $B (m, n),\, m \geq 0, \, n \geq 1$ :

\begin{table*}[!h]
\vspace*{-15ex}$
\begin{array}{c c}
\setlength{\unitlength}{0.16in}
\begin{picture}(20,8)

\put(5,2){\makebox(0,0)[c]{2}}
\put(5,1){\makebox(0,0)[c]{$\bigcirc$}}

\put(5.5,1){\line(1,0){1.0}}

\put(7,2){\makebox(0,0)[c]{2}}
\put(7,1){\makebox(0,0)[c]{$\bigcirc$}}

\put(7.6,1){\line(1,0){1.0}}

\put(9.4,1){\makebox(0,0)[c]{$\cdots$}}

\put(10,1){\line(1,0){1.0}}

\put(11.52,2){\makebox(0,0)[c]{2}}
\put(11.52,1){\makebox(0,0)[c]{$\bigcirc$}}

\put(12.1,1){\line(1,0){1.0}}

\put(13.75,2){\makebox(0,0)[c]{1}}
\put(13.75,1){\makebox(0,0)[c]{$\bigotimes$}}

\put(14.3,1){\line(1,0){1.0}}

\put(15.95,2){\makebox(0,0)[c]{1}}
\put(15.95,1){\makebox(0,0)[c]{$\bigotimes$}}

\put(16.5,1){\line(1,0){1.0}}

\put(18.25,1){\makebox(0,0)[c]{$\cdots$}}

\put(18.9,1){\line(1,0){1.2}}

\put(20.75,2){\makebox(0,0)[c]{1}}
\put(20.75,1){\makebox(0,0)[c]{$\bigotimes$}}

\put(22,1){\makebox(0,0)[c]{$\Longrightarrow$}}

\put(23.2,2){\makebox(0,0)[c]{1}}
\put(23.2,1){\makebox(0,0)[c]{${\raisebox{-.75ex}{\Huge{\textbullet}}}$}}

\end{picture}
\end{array}$
\end{table*}
\noindent where the number of white nodes is $m - n$ if $m \geq n$, and is $n - m - 1$ if $m \geq n - 1$; the total number of nodes is $m + n$.

\vspace{5mm}

\noindent $V$. $C(n) \, , \, n \, \geq \, 3 $ :

\begin{table*}[!h]
\vspace*{-15ex}$
\begin{array}{c c}
\setlength{\unitlength}{0.16in}
\begin{picture}(20,8)

\put(5,2){\makebox(0,0)[c]{2}}
\put(5,1){\makebox(0,0)[c]{$\bigcirc$}}

\put(5.5,1){\line(1,0){1.0}}

\put(7,2){\makebox(0,0)[c]{2}}
\put(7,1){\makebox(0,0)[c]{$\bigcirc$}}

\put(7.6,1){\line(1,0){1.0}}

\put(9.4,1){\makebox(0,0)[c]{$\cdots$}}

\put(10,1){\line(1,0){1.0}}

\put(11.52,2){\makebox(0,0)[c]{2}}
\put(11.52,1){\makebox(0,0)[c]{$\bigcirc$}}

\put(12.1,1){\line(1,0){1.0}}

\put(13.75,2){\makebox(0,0)[c]{2}}
\put(13.75,1){\makebox(0,0)[c]{$\bigcirc$}}

\put(14.3,1){\line(1,0){1.0}}

\put(15.95,2){\makebox(0,0)[c]{1}}
\put(15.95,1){\makebox(0,0)[c]{$\bigotimes$}}

\put(16.95,-1){\makebox(0,0)[c]{1}}
\put(15.95,-1){\makebox(0,0)[c]{$\bigotimes$}}

\put(13.9,0.7){\line(1,-1){1.6}}

\put(15.95,0.5){\line(0,-1){1}}

\end{picture}
\end{array}$
\end{table*}

\vspace{2mm}

\noindent where the number of white nodes is $n - 2$.

\vspace{5mm}

\noindent $VI$. $D (m, n),\,  \, m \geq 2 , \, n \geq 1$ :

\vspace{5mm}

\begin{table*}[!h]
\vspace*{-17ex}$
\begin{array}{c c}
\setlength{\unitlength}{0.16in}
\begin{picture}(20,8)

\put(5,2){\makebox(0,0)[c]{2}}
\put(5,1){\makebox(0,0)[c]{$\bigcirc$}}

\put(5.5,1){\line(1,0){1.0}}

\put(7,2){\makebox(0,0)[c]{2}}
\put(7.05,1){\makebox(0,0)[c]{$\bigcirc$}}

\put(7.6,1){\line(1,0){1.0}}

\put(9.4,1){\makebox(0,0)[c]{$\cdots$}}

\put(10,1){\line(1,0){1.0}}

\put(11.52,2){\makebox(0,0)[c]{2}}
\put(11.52,1){\makebox(0,0)[c]{$\bigcirc$}}

\put(12,1){\line(1,0){1.0}}

\put(13.5,2){\makebox(0,0)[c]{1}}
\put(13.5,1){\makebox(0,0)[c]{$\bigotimes$}}

\put(14,1){\line(1,0){1.0}}

\put(15.5,2){\makebox(0,0)[c]{1}}
\put(15.5,1){\makebox(0,0)[c]{$\bigotimes$}}

\put(16.1,1){\line(1,0){1.0}}

\put(18,1){\makebox(0,0)[c]{$\cdots$}}

\put(18.7,1){\line(1,0){1.0}}

\put(20.3,2){\makebox(0,0)[c]{1}}
\put(20.3,1){\makebox(0,0)[c]{$\bigotimes$}}

\put(20.9,1){\line(1,0){1.0}}

\put(22.55,2){\makebox(0,0)[c]{1}}
\put(22.55,1){\makebox(0,0)[c]{$\bigotimes$}}

\put(23.55,-1){\makebox(0,0)[c]{1}}
\put(22.55,-1){\makebox(0,0)[c]{$\bigotimes$}}

\put(20.4,0.6){\line(1,-1){1.6}}

\put(22.55,0.5){\line(0,-1){1}}

\end{picture}
\end{array}$
\end{table*}

\vspace{2mm}

\noindent where the number of white nodes is $m - n - 1$, if $m \geq n +1$, and is $n - m$ if $m \leq n$; the total number of nodes is $m + n$ 

\vspace{5mm}

\noindent $VII$. $D(2, 1; a)$ :

\begin{table*}[!h]
\vspace*{-14ex}
$
\begin{array}{c c}
\setlength{\unitlength}{0.16in}
\begin{picture}(20,8)

\put(6,2){\makebox(0,0)[c]{1}}
\put(6,1){\makebox(0,0)[c]{$\bigotimes$}}

\put(6.4,1){\line(1,0){1.0}}

\put(7.95,2){\makebox(0,0)[c]{1}}
\put(7.95,1){\makebox(0,0)[c]{$\bigotimes$}}

\put(8.95,-1){\makebox(0,0)[c]{1}}
\put(7.95,-1){\makebox(0,0)[c]{$\bigotimes$}}

\put(6.4,0.6){\line(1,-1){1.6}}

\put(7.95,0.5){\line(0,-1){1}}

\end{picture}
\end{array}$
\end{table*}

\vspace{2mm}

\noindent where the simple roots are $\{ \epsilon_{1} + \epsilon_{2} + \epsilon_{3} \, , \, \epsilon_{1} - \epsilon_{2} - \epsilon_{3} \, , \, - \epsilon_{1} - \epsilon_{2} + \epsilon_{3} \}$.

\vspace{5mm}

\noindent $VIII$. $F(4)$ :

\vspace{5mm}

\begin{table*}[!h]
\vspace*{-17ex}$
%
\setlength{\unitlength}{0.16in}
\begin{picture}(20,8)

\put(6,2){\makebox(0,0)[c]{1}}
\put(6,1){\makebox(0,0)[c]{$\bigotimes$}}

\put(6.4,1){\line(1,0){1.0}}

\put(7.95,2){\makebox(0,0)[c]{1}}
\put(7.95,1){\makebox(0,0)[c]{$\bigotimes$}}

\put(9.2,1){\makebox(0,0)[c]{$\Longleftarrow$}}

\put(10.2,2){\makebox(0,0)[c]{2}}
\put(10.2,1){\makebox(0,0)[c]{$\bigcirc$}}

\put(5,-1){\makebox(0,0)[c]{1}}
\put(6,-1){\makebox(0,0)[c]{$\bigotimes$}}

\put(7.95,0.6){\line(-1,-1){1.6}}

\put(6,0.5){\line(0,-1){1}}

\end{picture}
%
$
\end{table*}

\vspace{5mm}
\noindent where the simple roots are $\{ \frac{1}{2} (\delta + \epsilon_{1} - \epsilon_{2} - \epsilon_{3}), \frac{1}{2} (\delta - \epsilon_{1} + \epsilon_{2} + \epsilon_{3}), \, \frac{1}{2} (- \delta + \epsilon_{1} - \epsilon_{2} + \epsilon_{3}), \, \epsilon_{2} - \epsilon_{3} \}$.

\vspace{5mm}

\noindent $IX$. $G(3)$ :

\begin{table*}[!h]
\vspace*{-14ex}
$
%
\setlength{\unitlength}{0.16in}
\begin{picture}(20,8)

\put(5,2){\makebox(0,0)[c]{1}}
\put(5,1){\makebox(0,0)[c]{${\raisebox{-.75ex}{\Huge{\textbullet}}}$}}

\put(6.4,1){\makebox(0,0)[c]{$\Longleftarrow$}}

\put(7.7,2){\makebox(0,0)[c]{1}}
\put(7.7,1){\makebox(0,0)[c]{$\bigotimes$}}

\put(8.4,1){\line(1,0){1.0}}

\put(10,2){\makebox(0,0)[c]{2}}
\put(10,1){\makebox(0,0)[c]{$\bigcirc$}}

\end{picture}
%
$
\end{table*}

\vspace{4mm}
\noindent where the simple roots are $\{ \delta, - \delta + \epsilon_{1}, \, \epsilon_{2} - \epsilon_1 \}$.

\vspace{4mm}
Looking at these diagrams, we see that for all basic Lie superalgebras $\fg$,
except for $A(m,n)$ with $m-n$ even, which we shall exclude from consideration,
the $\frac{1}{2}\ZZ$-grading (\ref{eq:1.1}), corresponding to the principal
nilpotent element, is defined by
\begin{displaymath}
  \alpha_i(\rho^\vee)=1\,\, (\hbox{resp}=\frac{1}{2})\,\,\hbox{if}\,\,
  \alpha_i\, \,\hbox{is even (resp. odd)},
\end{displaymath}
where $\alpha_i$, $i=1,...,\rank\,\fg$, are simple roots..
Hence this grading is compatible with the parity and $\fg_0=\fh$.
It follows that (\ref{eq:6.2}) still holds. Furthermore, $\fg_{1/2}$
(resp. $\fg_{-1/2}$) is a purely odd space, spanned by the
$e_{\alpha_i}$
(resp. $e_{-\alpha_i}$),
where the $\alpha_i$ are all odd simple roots.
The element $f\in \fg_{-1}$ can be chosen as follows. Let $f^0$ (resp. $f^1$)
be the sum of all $e_{-\alpha_i}$ with $\alpha_i$
even (resp. odd); then 
\begin{displaymath}
f=f^0 + [f^1,f^1].
\end{displaymath}

\begin{remark}
  \label{rem:6.1}
It is probably impossible to write down a complete FFR of an arbitrary $W$-algebra, with explicit expressions for all elements beyond those of conformal weight $1$, $\frac{3}{2}$,
and $L$. However, in many cases (including the minimal one) the $W$-algebra
$W^k(\fg, x,f)$
is generated by elements of conformal weight $1$ and $\frac{3}{2}$ (this happens, for example, when
$\fg^f_{-1/2}$
generates $\fg^f_{<0}$). In such cases formulas
(\ref{eq:2.14}) and (\ref{eq:2.17}) can be extended to the complete FFR of this $W$-algebra.
\end{remark}

\section{ Appendix A. A more conceptual proof of Theorem 3.2 for Dynkin datum}
\label{sec:7}

We give a proof under the following assumptions :

(i) the restriction of the bilinear form $(. \ | \ .)$ to $\fg^{f}_{0}$ is non-degenerate;

(ii) (even part of $\fg_{-1}$) $\cap$ (center of $\fg^{f}$) $= \mathbb{F} f$.

Property (i)  holds for any basic Lie superalgebra $\fg$
and its Dynkin
grading (\ref{eq:1.1}).
Property (ii) holds for a Dynkin grading by the Brylinski-Kostant Theorem \cite{BK},
\cite{P} in the Lie algebra case, and its analogue in the Lie superalgebra
case, which follows from \cite{H1} and \cite{H2}.

\vspace{4mm}

\begin{lemma}
  \label{lem:7.1}
  Let L be the element of $\C^k (\fg , x, f)$, given by (\ref{eq:3.1}), and let $L^{'} = - \frac{1}{k + h^{\vee}} \ J^{\{ f \}}$ , where $J^{\{ f \}}$ is an element of $\C^k (\fg , x, f)$ given by Theorem \ref{th:3.1}. Let $a \in \fg^{f}_{0}$ and $J^{\{ a \}} \in \C^k (\fg , x, f)$ be defined by (\ref{eq:2.14}). Then \\

  (a) $[L_\lambda J^{\{ a \}}]  =  (\partial + \lambda) J^{\{ a \}}  +  \lambda^2 (\rho_{> 0} | a)$.
  
  (b) $[L^{'}_{\lambda} J^{\{ a \}}]  =  (\partial + \lambda) J^{\{ a \}}  +  \lambda^{2} (\rho_{> 0} | a)$. \\
\begin{proof}
From (\ref{eq:2.10}) and (\ref{eq:2.14}) we deduce 
\begin{equation}
    \label{eq:7.1}
    [{J^{\{ a \}}}_{\lambda} J^{(v)}]  =  J^{([a,v])} + \lambda B_{0} (a,b),\, a \in \fg^{f}_{0},\, v \in \fg_{\leq 0} .
\end{equation}
Using (\ref{eq:2.14}) we obtain
\begin{equation}
    \label{eq:7.2}
    [{J^{\{ a \}}}_\lambda \Phi_{u}]  =  (-1)^{p(a)} \Phi_{[a,u]},\, a \in \fg^{f}_{0},\, u \in \fg_{1/2} .
\end{equation}
From (\ref{eq:7.1}) and (\ref{eq:7.2}) we deduce, by making use of the non-commutative Wick formula, for $a \in \fg^{f}_{0}$, $i \in S_{0}$, $j \in S_{1/2}$ :
\begin{displaymath}
  [{J^{\{ a \}}}_{\lambda} : J^{(u^{i})} J^{(u_{i})} :]  =\,  : J^{([a,u^{i}])} J^{(u_{i})} : + (-1)^{p(a)p(i)} : J^{(u^{i})} J^{([a,u_{i}])} :
\end{displaymath}
\begin{displaymath}
+ (-1)^{p(i)} \lambda J^{([u_{i}, [u^{i},a]])} + \lambda (B_{0} (a,u^{i}) J^{(u_{i})} + (-1)^{p(i)} B_{0} (a,u_{i}) J^{(u^{i})} )
+ \frac{1}{2} \lambda^{2} B_{0} (a , [u^{i} , u_{i}]) ,
\end{displaymath}
\begin{displaymath}
[{J^{\{ a \}}}_{\lambda} : \Phi^{j} \partial \Phi_{j} : ] \ = \ (-1)^{p (a) (p (j) \ + \ 1)}\lambda : \Phi^{j} \Phi_{[a, u_{j}]} : \ + \ \frac{1}{2} (-1)^{p(a)} \lambda^{2} \ \langle [a, u^{j}] , u_{j}  \rangle^{\ne} , 
\end{displaymath}
\begin{displaymath}
[{J^{\{ a \}}}_{\lambda} : \Phi^{j} J^{([f, u_{j}])} : ]  =  (-1)^{p(a)} \ :  \Phi_{[a, u^{j}]} J^{([f, u_{j}])}  : \ + (-1)^{p(a)p(j)} \ :  \Phi^{j} J^{([a, [f, u_{j}]])}  : .
\end{displaymath}
Summing up both sides of the first formula over $i \in S_{0}$, the second and third formula over $j \in S_{1/2}$, we obtain the following three formulas
for  $a \in \fg^{f}_{0}$ :
\begin{equation}
    \label{eq:7.3}
    \underset{i \in S_{0}}{\sum}  [{J^{\{ a \}}}_{\lambda}  :  J^{(u^{i})} J^{(u_{i})}  : ]  =  2 (k + h^{\vee}) \lambda J^{(a)} ,
\end{equation}
\begin{equation}
    \label{eq:7.4}
    \underset{j \in S_{1/2}}{\sum}  [{a^{\ne}} _\lambda  : \Phi^{j}  \partial \Phi_{j}  :  ]  =   -2 \lambda a^{\ne} ,
\end{equation}
where $a^{\ne}$ is the secind term on the right in (\ref{eq:2.14}) ,
\begin{equation}
    \label{eq:7.5}
    \underset{j \in S_{1/2}}{\sum}  [{J^{\{ a \}}}_{\lambda} (-1)^{p(j)}
       :  \Phi^{j} J^{([f, u_{j}])}  : ]  =  0 .
\end{equation}
Using (\ref{eq:7.1}), we obtain 
\begin{equation}
    \label{eq:7.6}
    [{J^{\{ a \}}}_{\lambda} J^{(f)}]  =  0  =  [{J^{\{ a \}}}_{\lambda} J^{(x)}] \ \text{for} \, a \in \fg^{f}_{0}.
\end{equation}
Using Proposition \ref{prop:3.2},
we obtain from (\ref{eq:7.1}) :
\begin{equation}
    \label{eq:7.7}
    [{J^{\{ a \}}}_\lambda  J^{(\rho_{> 0})}]  =  \lambda (k + h^{\vee})(a |\rho_{> 0}) .
\end{equation}

Now we can complete the proof of the lemma.  Formula (a) is straightforward by the
discussion in Section \ref{sec:3}, cf. (\ref{eq:3.4}).  Below we shall prove (b).
We have for $a \in \fg^{f}_{0}$ :
\begin{displaymath}
[{J^{\{ a \}}}_{\lambda} J^{\{ f \}}]  =  [{J^{\{ a \}}}_{\lambda} J^{(f)}]
+ \ \underset{i \in S_{1/2}}{\sum} (-1)^{p(i)}  [{J^{\{ a \}}}_{\lambda}  :  \Phi^{i} J^{([f, u_{i}])} : ]
\end{displaymath}
\begin{displaymath}
- \frac{1}{2} \underset{i \in S_{0}}{\sum} [{J^{\{ a \}}}_{\lambda} : J^{(u^{i})} J^{(u_{i})} : ] - (k + h^{\vee}) [{J^{\{ a \}}}_{\lambda} \partial J^{(x)}] 
\end{displaymath}
\begin{displaymath}
+ [{J^{\{ a \}}}_{\lambda} \ \partial J^{(\rho_{> 0})}] + \frac{1}{2} (k + h^{\vee}) \underset{i \in S_{1/2}}{\sum} [{J^{\{ a \}}}_{\lambda}  : \Phi^{i} \partial \Phi_{i} :].
\end{displaymath}
The first and the fourth terms on the right are equal to $0$ by (\ref{eq:7.6}), and the second term equals $0$ by (\ref{eq:7.5}).  The third term equals $-(k + h^{\vee}) \lambda J^{(a)}$ by (\ref{eq:7.3}).  The fifth term equals $\lambda^{2} (k + h^{\vee}) (a | \rho_{> 0})$ by (\ref{eq:7.7}).  The sixth term equals $-(k + h^{\vee}) \lambda a^{\ne}$ by (\ref{eq:7.4}).  Thus, we have :
\begin{displaymath}
[{J^{{\{ f \}}}}_{\lambda} J^{\{ a \}}]  =  - [{J^{\{ a \}}}_{- \partial - \lambda} J^{\{ f \}}]
  =  - (k + h^\vee) (\partial + \lambda) J^{\{ a \}}  -
  \lambda^2 (k + h^\vee) (a | \rho_{> 0})
\end{displaymath}
\begin{displaymath}
= \ - (k + h^{\vee}) ((\partial + \lambda) J^{\{ a \}}  + \lambda^{2} (a | \rho_{> 0}) ),
\end{displaymath}
proving formula (b).
\end{proof}
\end{lemma}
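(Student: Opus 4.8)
The plan is to establish the two $\lambda$-bracket identities by different routes: part (a) reads the conformal behaviour of $J^{\{a\}}$ (for $a\in\fg^f_0$) off the explicit Virasoro element $L$ of (\ref{eq:3.2}), while part (b) reads the same behaviour off $L'=-\tfrac{1}{k+h^\vee}J^{\{f\}}$, with $J^{\{f\}}$ as in Theorem \ref{th:3.1}. In both parts I will use that $a\in\fg^f_0$ satisfies $[a,x]=0$ and $[a,f]=0$, and --- since we work with a Dynkin datum --- that $(a|x)=0$: such an $a$ is a singlet for the triple $\{e,x,f\}$, so $[a,e]=0$ and $(a|x)=\tfrac12(a|[e,f])=\tfrac12([a,e]|f)=0$.

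For (a) I would write $J^{\{a\}}=a+a^{\ch}+a^{\ne}$, where $a^{\ch}=\sum_{j\in S_{>0}}(-1)^{p(j)}:\varphi_{[a,u_j]}\varphi^j:$ is the charged part of $J^{(a)}$ in (\ref{eq:2.7}) and $a^{\ne}$ is the second summand of (\ref{eq:2.14}), and apply the non-commutative Wick formula to $[L_\lambda\,\cdot\,]$ summand by summand. The inputs are formula (\ref{eq:3.4}), $[L_\lambda a]=(\partial+\lambda)a-\lambda^2k(a|x)$, and the facts from Section \ref{sec:3} that $\varphi_j,\varphi^j,\Phi_j$ are primary of weights $1-m_j,\,m_j,\,\tfrac12$. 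As each summand has total conformal weight $1$, the Wick expansion reproduces $(\partial+\lambda)J^{\{a\}}$ plus a purely quadratic anomaly. For a normally ordered product of two primaries whose weights sum to $1$ and whose $\lambda$-bracket is a scalar $c$, this anomaly equals $\tfrac12(w_1-w_2)c\,\lambda^2$; hence $a^{\ne}$ (two weight-$\tfrac12$ fields) contributes nothing, while $a^{\ch}$ contributes $\tfrac{\lambda^2}{2}\sum_j(-1)^{p(j)}(1-2m_j)([a,u_j]|u^j)=\lambda^2\big((\rho_{>0}|a)-\kappa_{>0}(a,x)\big)$, using $\str_{\fg_{>0}}\ad a=2(\rho_{>0}|a)$ (Lemma \ref{lem:4.2}, (\ref{eq:4.6})) and $\sum_j(-1)^{p(j)}m_j([a,u_j]|u^j)=\kappa_{>0}(a,x)$. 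Adding $-\lambda^2k(a|x)$ from the $a$-term, the total anomaly is $-k(a|x)+(\rho_{>0}|a)-\kappa_{>0}(a,x)$. Finally $(a|x)=0$ gives $\kappa(a,x)=2h^\vee(a|x)=0$ by (\ref{eq:2.8a}); since $\ad x$ acts as the grading one has $\kappa_0(a,x)=0$ and $\kappa_{<0}(a,x)=\kappa_{>0}(a,x)$, so $\kappa(a,x)=2\kappa_{>0}(a,x)$ and therefore $\kappa_{>0}(a,x)=0$. The anomaly collapses to $\lambda^2(\rho_{>0}|a)$, which is (a).

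For (b) the main work is a package of auxiliary $\lambda$-brackets of $J^{\{a\}}$ against the six building blocks of $J^{\{f\}}$. First, (\ref{eq:2.10}) and (\ref{eq:2.14}) give $[{J^{\{a\}}}_\lambda J^{(v)}]=J^{([a,v])}+\lambda B_0(a,v)$ for $v\in\fg_{\leq 0}$ and $[{J^{\{a\}}}_\lambda\Phi_u]=(-1)^{p(a)}\Phi_{[a,u]}$ for $u\in\fg_{1/2}$. Feeding these into the non-commutative Wick formula produces the composite brackets $\sum_{i\in S_0}[{J^{\{a\}}}_\lambda:J^{(u^i)}J^{(u_i)}:]=2(k+h^\vee)\lambda J^{(a)}$, then $\sum_{j\in S_{1/2}}[{J^{\{a\}}}_\lambda:\Phi^j\partial\Phi_j:]=-2\lambda a^{\ne}$ (only $a^{\ne}$ survives, since $J^{(a)}$ involves no neutral fermions), then $\sum_j(-1)^{p(j)}[{J^{\{a\}}}_\lambda:\Phi^j J^{([f,u_j])}:]=0$, and $[{J^{\{a\}}}_\lambda J^{(f)}]=0=[{J^{\{a\}}}_\lambda J^{(x)}]$ (here $[a,f]=[a,x]=0$ and $B_0(a,f)=B_0(a,x)=0$); using Proposition \ref{prop:3.2} that $\rho_{>0}$ is central in $\fg_0$ one also gets $[{J^{\{a\}}}_\lambda J^{(\rho_{>0})}]=\lambda(k+h^\vee)(a|\rho_{>0})$. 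Substituting these into the formula of Theorem \ref{th:3.1} for $J^{\{f\}}$, and applying sesquilinearity $[{J^{\{a\}}}_\lambda\partial B]=(\partial+\lambda)[{J^{\{a\}}}_\lambda B]$ to the $\partial J^{(x)}$ and $\partial J^{(\rho_{>0})}$ terms, the six contributions collapse to $[{J^{\{a\}}}_\lambda J^{\{f\}}]=-(k+h^\vee)\lambda J^{\{a\}}+(k+h^\vee)\lambda^2(a|\rho_{>0})$. As $f$ is even for a Dynkin datum, skewsymmetry of the $\lambda$-bracket gives $[{J^{\{f\}}}_\lambda J^{\{a\}}]=-[{J^{\{a\}}}_{-\partial-\lambda}J^{\{f\}}]=-(k+h^\vee)\big((\partial+\lambda)J^{\{a\}}+\lambda^2(a|\rho_{>0})\big)$; dividing by $-(k+h^\vee)$ yields (b).

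The hard part will be the Sugawara-type identity $\sum_{i\in S_0}[{J^{\{a\}}}_\lambda:J^{(u^i)}J^{(u_i)}:]=2(k+h^\vee)\lambda J^{(a)}$. Expanding each bracket by the Wick formula yields a $\lambda^0$ term, which must cancel because the quadratic Casimir $\sum_i:J^{(u^i)}J^{(u_i)}:$ is $\ad a$-invariant, a $\lambda^2$ term, which cancels by the skew pairing of dual bases, and a $\lambda^1$ term whose reduction to exactly $2(k+h^\vee)J^{(a)}$ is where the identity $\kappa=2h^\vee(\cdot|\cdot)$ of (\ref{eq:2.8a}) and the bookkeeping of $B_0$ and $\Omega_0$ enter --- this is the computational core of the whole argument. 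A second delicate point is the vanishing of the $:\Phi^j J^{([f,u_j])}:$ bracket, which relies on the Jacobi identity $[a,[f,u_j]]=[f,[a,u_j]]$ (valid since $[a,f]=0$) together with a relabelling of the $\tfrac12$-graded basis. Finally, the mutual consistency of (a) and (b) rests on $(a|x)=0$ for $a\in\fg^f_0$, which is precisely the point at which the Dynkin hypothesis is used.
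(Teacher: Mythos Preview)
Your proposal is correct and follows essentially the same route as the paper. For (b) you reproduce the paper's argument almost verbatim: compute $[{J^{\{a\}}}_\lambda J^{\{f\}}]$ against each of the six summands of Theorem~\ref{th:3.1} via the auxiliary identities (your list matches (\ref{eq:7.1})--(\ref{eq:7.7})), then apply skewsymmetry. For (a) you spell out what the paper dismisses as ``straightforward by the discussion in Section~\ref{sec:3}'': your termwise computation of the $\lambda^2$ anomaly from $a$, $a^{\ch}$, $a^{\ne}$ is correct and in fact recovers the general coefficient $(\rho_{>0}|a)-(k+h^\vee)(x|a)$ of Remark~\ref{rem:7.1}, which you then reduce using $(a|x)=0$ in the Dynkin case. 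One cosmetic slip: with the paper's normalization $[e,f]=x$ (not $2x$), your display $(a|x)=\tfrac12(a|[e,f])$ should drop the $\tfrac12$; the conclusion $(a|x)=0$ is unaffected. Your identification of where the Dynkin hypothesis enters---namely in forcing $B_0(a,x)=(k+h^\vee)(a|x)=0$ so that (\ref{eq:7.6}) holds, and in killing the $-k(a|x)$ and $\kappa_{>0}(a,x)$ pieces in (a)---is exactly right.
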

\begin{remark}
  \label{rem:7.1}
  The same proof shows that Lemma \ref{lem:7.1} holds for arbitrary grading
  (\ref{eq:1.1}), satisfying  (\ref{eq:1.5}), if we replace the coefficient of
  $\lambda ^2$ by $(\rho_{>0}|a)-(k+h^\vee)(x|a)$.
\end{remark}


\begin{lemma}
  \label{lem:7.2}
  Assume that condition (i) holds and let $M$ be an element of confirmal weight 2 (with respect to $L$), which lies in the subalgebra of $\C^{k} (\fg , x, f)$, generated by the elements $J^{\{ a \}}$, $a \in \fg^{f}_{0}$.    Suppose that
  \begin{equation}
    \label{eq:7.8}
   [{J^{\{ a \}}}_{\lambda} M] = 0\,\, \hbox{for all}\,\,\, a \in \fg^{f}_{0}.
  \end{equation}
  Then $M  =  0$.
  \begin{proof}    By the condition (i),
    the subalgebra of $\C^{k} (\fg , x, f)$ generated by the elements $J^{\{ a \}} , a \in \fg^{f}_{0}$, is the affine vertex algebra $V^{B_{1/2}} (\fg^{f}_{0})$ (see (\ref{eq:2.15}), (\ref{eq:2.16})), associated to the Lie superalgebra
    $\fg^f_0$ and a non-degenerative bilinear form on it, for all, but finitely many values of $k$. It follows that $\fg_0^f$ is a direct sum of its center, some basic Lie superalgebras and Lie superalgebras $gl(n|n), n\geq 1$. Therefore, for all, but finitely many values of $k$, this vertex algebra carries the
    Sugawara element, with respect to which all elements $J^{\{ a \}}$, $a \in \fg^{f}_{0}$, have conformal weight 1 (see e.g. \cite{K2}); for $gl(n|n)$ we use the modification of the Sugawara construction, discussed in Appendix B. Hence, the center of $V^{B_{1/2}} (\fg^{f}_{0})$ consists of the multiples of the vacuum vector.
    Since condition (\ref{eq:7.8}) implies that M is a central element of $V^{B_{1/2}} (\fg^{f}_{0})$, we conclude that $M = 0$ for all, but finitely many $k$, hence for all $k$.
  \end{proof}
\end{lemma}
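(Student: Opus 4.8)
The plan is to realize the subalgebra of $\C^k(\fg,x,f)$ generated by the $J^{\{a\}}$, $a\in\fg^f_0$, as a universal affine vertex algebra, and then to show that its center reduces to scalars, so that a central element of positive conformal weight must vanish. Concretely, by the $\lambda$-brackets (\ref{eq:2.15})--(\ref{eq:2.16}) the elements $J^{\{a\}}$, $a\in\fg^f_0$, satisfy the affine relations for $\fg^f_0$ with invariant bilinear form $B_{1/2}(a,b)=k(a|b)+\kappa_{>0}(a,b)-\tfrac12\kappa_{1/2}(a,b)$. Under condition (i) the form $(.|.)$ is non-degenerate on $\fg^f_0$, and since $B_{1/2}$ is $k(.|.)$ plus a fixed bilinear form, $B_{1/2}$ is non-degenerate for all but finitely many $k$; for such $k$ the subalgebra in question is the universal affine vertex algebra $V^{B_{1/2}}(\fg^f_0)$. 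A finite-dimensional Lie superalgebra carrying a non-degenerate even invariant supersymmetric bilinear form is a direct sum of its center, some basic Lie superalgebras, and some copies of $gl(n|n)$, so $\fg^f_0$ has this shape.

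Next I would equip $V^{B_{1/2}}(\fg^f_0)$ with a Sugawara-type conformal vector $\omega$: on the abelian and basic summands the usual Sugawara element exists for all non-critical $k$ (hence for all but finitely many $k$), and on each $gl(n|n)$ summand one uses the modification of the Sugawara construction discussed in Appendix B. With respect to $\omega$ each $J^{\{a\}}$ is primary of conformal weight $1$, so $\omega$ induces a $\ZZ_{\geq 0}$-grading of $V^{B_{1/2}}(\fg^f_0)$ whose weight-zero subspace is $\FF|0\rangle$.

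Then I would combine the two ingredients. Hypothesis (\ref{eq:7.8}), together with skewsymmetry of the $\lambda$-bracket, shows that $M$ is annihilated by every non-negative Fourier mode of every $J^{\{a\}}$; since $\omega$ is a normally ordered differential polynomial in the $J^{\{a\}}$, it follows that $M$ is annihilated by every non-negative mode of $\omega$, and in particular $\omega_{(1)}M=0$. Hence $M$ lies in the weight-zero subspace $\FF|0\rangle$. But a nonzero multiple of the vacuum has conformal weight $0$ with respect to $L$, contradicting that $M$ has $L$-conformal weight $2$; therefore $M=0$ for all but finitely many $k$. Since $M$ is a fixed normally ordered differential polynomial in the $J^{\{a\}}$ with coefficients rational in $k$ and regular at the relevant values, vanishing for infinitely many $k$ forces $M=0$ for all $k$.

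The main obstacle is the assertion that the center of $V^{B_{1/2}}(\fg^f_0)$ is spanned by the vacuum: this rests on the structural description of $\fg^f_0$ (center plus basic plus $gl(n|n)$ pieces) and on the existence of a Sugawara conformal vector on each piece, the $gl(n|n)$ factors being the genuinely delicate case, where the ordinary Sugawara construction degenerates and the Appendix B modification is required. The upgrade from "central with respect to the generating set" to "central" is free here, since the $J^{\{a\}}$, $a\in\fg^f_0$, generate the subalgebra under consideration.
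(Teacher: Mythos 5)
Your proposal follows the paper's own argument essentially step for step: realize the subalgebra as $V^{B_{1/2}}(\fg^f_0)$ with $B_{1/2}$ non-degenerate for all but finitely many $k$, decompose $\fg^f_0$ into its center, basic summands and $gl(n|n)$ summands, invoke the (suitably modified) Sugawara vector to see that the centralizer of the generators meets only the span of the vacuum, and conclude via the conformal weight $2$ hypothesis and a rationality-in-$k$ argument. The only difference is that you spell out explicitly two points the paper leaves implicit (why centrality forces $M$ into the weight-zero subspace, and why vanishing for all but finitely many $k$ gives vanishing for all $k$), which is harmless.
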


Now we can complete the proof of Theorem \ref{th:3.2}.
    Since L has conformed weight 2 (hereafter conformal weight is meant with respect to L), by [KW], Theorem 4.1(a), L is $d_{(0)}$-equivalent to an element of the form 
        \begin{equation}
        \label{eq:7.9}
        J^{\{ f^{'} \}} \ = \ J^{(f^{'})} \ + \ M_{1} ,
    \end{equation}
        where $f^{'} \in \fg^{f}_{-1}$ and $M_{1}$ lies in the subalgebra of
        $\C^k(\fg , x, f)$ generated by the $J^{(a)}$ with
        $a \in \fg_{0} + \fg_{-1/2}$ and the $\Phi_{i}, i \in S_{1/2}$ .  Let $v \in \fg^{f}$ and let $J^{\{ v \}} \in \overline{\C}^{k} (\fg , x, f)$ be an element, given by \cite{KW}, Theorem 4.1(a).  Since $J^{\{ f^{'} \}}$ is $d_{(0)}$-equivalent to L, we have in $W^{k} (\fg , x, f)$
        \begin{equation}
        \label{eq:7.10}
        [{J^{\{ f^{'} \}}}_{\lambda} J^{\{ v \}}] \ = \ \partial J^{\{ v \}} \ + \ O (\lambda).
    \end{equation}
        But, due to (\ref{eq:2.8}), equation (\ref{eq:7.10}) is an equality in $\overline{\C}^{k} (\fg , x, f)$. It is easy to conclude from (\ref{eq:7.9}), using (\ref{eq:2.10}) (which can be used since $f^{'} \in \fg_{-1}$ and $v \in \fg_{\leq 0}$), that we have the following equality in $\overline{\C}^{k} (\fg , x, f)$ :
    \begin{equation}
    \label{eq:7.11}
    [{J^{\{ f^{'} \}}}_{\lambda} J^{\{ v \}}] \ = \ J^{\{ [f^{'} , v] \}} + A + B + O (\lambda),
\end{equation}
where $A$ is a linear combination of elements of the form $\sum \ :  \partial^{i_{1}} J^{(v_{1})} \dots \partial^{i_{n}} J^{(v_{n})}:$
with $n \geq 2$ or $i_{1} \ + \dots + i_{n} \geq 1$, and $B$ is a linear combination of normally ordered products, involving the $\Phi_{i}$.
Comparing (\ref{eq:7.10}) and (\ref{eq:7.11}), we conclude that 
\begin{displaymath}
[f^{'} , \fg^{f}]  =  0.
\end{displaymath}
Hence, by condition (ii), we have 
\begin{displaymath}
f^{'}  = \gamma f \ \text{for some} \ \gamma \in \mathbb{F} .
\end{displaymath}
Therefore, L - $\gamma J^{(f)}$ is a sum of normally ordered products of elements of $\overline{\C}^{k} (\fg , x, f)$ of conformal weight $< 2$. 
It follows that, for $J^{\{ f \}}$ from Theorem \ref{th:3.1}, we have 
\begin{equation}
    \label{7.12}
    L \ - \ \gamma J^{\{ f \}} \ = \ M ,
\end{equation}
where M is a $d_{(0)}$-closed element of conformal weight 1. Hence, by \cite{KW}, Theorem 4.1(b) , M is a linear combination of elements of the form : $J^{\{ a \}} J^{\{ b \}}$ : and $\partial J^{\{ c \}}$ , where $a, \ b, \ c \ \in \ \fg^{f}_{0}$.  By Lemma \ref{lem:7.1}, M satisfies (\ref{eq:7.8}), hence, by Lemma \ref{lem:7.2}, $M = 0$, and we have
\begin{equation}
    \label{eq:7.13}
    L  =  \gamma J^{\{ f \}}.
\end{equation}

In order to complete the proof of Theorem \ref{th:3.2}, it remains to show that
\begin{equation}
    \label{eq:7.14}
    \gamma \ = \ - \frac{1}{k + h^{\vee}} .
\end{equation}
For that we use the following formula, which can be deduced from the discussion of properties of L in Section 3 :
\begin{equation}
    \label{eq:7.15}
          [L_{\lambda} J^{(v)}] \ = \ (\partial + \Delta_v \lambda) J^{(v)} \ +
            \ \lambda^{2} (\rho_{> 0} | v),\,\, v \in \fg,
\end{equation}
where $\Delta_{v}$ is the conformal weight of $J^{(v)}$.
Using formula (\ref{eq:7.15}), we obtain 
\begin{equation}
    \label{eq:7.16}
    [L_{\lambda} \gamma J^{\{ f \}}] \ = \ (\partial + 2 \lambda) \gamma J^{\{ f \}} \ + \ \lambda^{3} \big(- \gamma (k + h^\vee) \frac{c (\fg, x, k)}{12}+\gamma \beta \big).
\end{equation}
where
\begin{equation*}
  \beta = \ (\rho_{> 0} | \rho_{> 0}) - (\rho  |  \rho) + \frac{1}{6}
  (\rho_{1/2}  | \rho_{> 0})
+  \frac{1}{24}  \str_{\fg_{0} \oplus \fg_{1/2}} \Omega_0 .
\end{equation*}
Using (\ref{eq:7.13}) and comparing (\ref{eq:7.16}) with 
\begin{displaymath}
[L_\lambda L] \ = \ (\partial + 2 \lambda) L \ + \ \frac{\lambda^{3}}{12} \ c (\fg, x, k),
\end{displaymath}
where $c (\fg, x, k)$,  given by (\ref{eq:3.3}), is non-zero for generic k, we obtain (\ref{eq:7.14})  (and also that $\beta = 0$).
This completes the proof of Theorem \ref{th:3.2}.

\section{ Appendix B. Theorem 3.2 for $\fg = gl(n|n)$.}
\label{sec:8}

Let $\fg$ be a finite-dimensional Lie superalgebra over $\mathbb{F}$ with an even invariant supersymmetric bilinear form $(. | .)$. In order to apply the Sugawara construction, we need two properties: 

(i) the bilinear form $(. | .)$ is non-degenerate, so that we can choose dual bases $\{ u_i \}$ and $\{ u^i \}$ of $\fg$ with respect to this form and construct the Casimir operator
\begin{displaymath}
\Omega \ = \ \underset{i}{\sum}\ u^i u_i \ \in U (\fg);
\end{displaymath}

(ii) the Casimir operator $\Omega$ acts on $\fg$ as a scalar (which we denoted by $2 h^{\vee}$). 

In this Appendix we consider $\fg = gl (n|n)$ with the bilinear form
$(a|b) = \str\, ab$. The property (i) holds, but (ii) fails. However, we will show that the Sugawara operator $L^{\fg}$, appearing in the formula (\ref{eq:3.2}) for L can be modified, so that the resulting modified $L$ is a
Virasoro vector, which satisfies a modified Theorem \ref{th:3.2} with the modified $J^{\{ f \}}$.

Let $I$ be the identity matrix in $\fg$ and let
\begin{equation}
    \label{eq:8.1}
    \omega \ = \ \underset{i}{\sum} \ : u^i u_i : \ \in V^{k} (\fg).
\end{equation}
The following formulas are obtained by straightforward computations, where $a \in \fg$ :
\begin{equation}
    \label{eq:8.2}
    \Omega (a) \ = \ - 2 (a|I) I ;
\end{equation}
\begin{equation}
    \label{eq:8.3}
    [a_{\lambda} \omega] \ = \ 2 \lambda k a - 2 \lambda (a|I) I ;
\end{equation}
\begin{equation}
    \label{eq:8.4}
    [a_{\lambda} I] \ = \ \lambda k (a|I) ;
\end{equation}
\begin{equation}
    \label{eq:8.5}
    [a_{\lambda} : I^2 :] \ = \ 2 \lambda k (a|I) I .
\end{equation}

Introduce the modified Sugawara operator
\begin{equation}
    \label{eq:8.6}
    L^{\fg} \ = \ \frac{1}{2 k} \omega \ + \ \frac{1}{2 k^2} \ :  I^2: .
\end{equation}
It is straightforward to deduce from (\ref{eq:8.3}) - (\ref{eq:8.5}) the following two formulas:
\begin{equation}
    \label{eq:8.7}
          [a_{\lambda} L^{\fg}] \ = \ \lambda a ,\,\, \text{hence}\,\,
          [{L^{\fg}}_{\lambda} a] \ = \ (\partial + \lambda)a ;
\end{equation}
\begin{equation}
    \label{eq:8.8}
    [{L^{\fg}}_\lambda  L^{\fg}] \ = \ (\partial + 2 \lambda) L^{\fg} .
\end{equation}
Hence, we have the following proposition.
\begin{proposition}
 \label {prop:8.1}.
  The element $L^{\fg}$ defined by (\ref{eq:8.6}) is a Virasoro vector of $V^k (\fg)$ with central charge 0, for which $a \in \fg $ have conformal weight 1. Hence, $V^k (\fg)$ is a conformal vertex algebra of CFT type for all $k \neq 0$.
\end{proposition}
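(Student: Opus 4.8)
The plan is to read off the proposition from the two $\lambda$-bracket identities (\ref{eq:8.7}) and (\ref{eq:8.8}), which are already established. First I would note that (\ref{eq:8.8}), written in components, says precisely ${L^{\fg}}_{(0)}L^{\fg}=\partial L^{\fg}$, ${L^{\fg}}_{(1)}L^{\fg}=2L^{\fg}$, and ${L^{\fg}}_{(n)}L^{\fg}=0$ for all $n\geq 2$; comparing with the defining $\lambda$-bracket of a Virasoro vector, $[L_\lambda L]=(\partial+2\lambda)L+\tfrac{\lambda^3}{12}c\,|0\rangle$, this is exactly the assertion that $L^{\fg}$ is a Virasoro vector with central charge $c=0$.

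Next, from (\ref{eq:8.7}) we have $[{L^{\fg}}_\lambda a]=(\partial+\lambda)a$ for every $a\in\fg$, that is, ${L^{\fg}}_{(0)}a=\partial a$, ${L^{\fg}}_{(1)}a=a$, and ${L^{\fg}}_{(n)}a=0$ for $n\geq 2$; hence each $a\in\fg$ is primary of conformal weight $1$. Since for any vertex algebra and any element $v$ the operator $v_{(0)}$ is a derivation of all $n$-th products and commutes with $\partial$, and since ${L^{\fg}}_{(0)}$ agrees with $\partial$ on the generating subspace $\fg$, it follows that ${L^{\fg}}_{(0)}=\partial$ on all of $V^k(\fg)$. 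Thus $L^{\fg}$ satisfies all the axioms of a conformal vector.

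It remains to verify that $V^k(\fg)$ is of CFT type. By the PBW theorem for $V^k(\fg)$, this vertex algebra is spanned by the normally ordered monomials $:\partial^{j_1}\!a_{i_1}\cdots\partial^{j_r}\!a_{i_r}:$ in a fixed ordered basis $\{a_i\}$ of $\fg$. Because ${L^{\fg}}_{(1)}$ is a derivation of normally ordered products, $\partial$ raises ${L^{\fg}}_{(1)}$-weight by $1$, and each $a_i$ has weight $1$, such a monomial is an ${L^{\fg}}_{(1)}$-eigenvector of eigenvalue $\sum_{s=1}^{r}(j_s+1)\in\ZZ_{\geq 0}$, and for each fixed value of this sum only finitely many monomials occur since $\fg$ is finite-dimensional; the only monomial of weight $0$ is $|0\rangle$. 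Hence ${L^{\fg}}_{(1)}$ is diagonalizable with spectrum in $\ZZ_{\geq 0}$, the graded pieces are finite-dimensional, and $(V^k(\fg))_0=\FF|0\rangle$, which is the CFT-type condition; everything above makes sense exactly when $k\neq 0$, so that $L^{\fg}$ in (\ref{eq:8.6}) is defined.

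Granting (\ref{eq:8.7}) and (\ref{eq:8.8}), there is no real obstacle left. If one instead had to establish those two identities from scratch, the only delicate point would be the vanishing of the $\lambda^3$-term in $[{L^{\fg}}_\lambda L^{\fg}]$: by (\ref{eq:8.2}) the Casimir $\Omega$ acts on $\fg$ not as a scalar but through the central, isotropic element $I$, so the naive Sugawara term $\tfrac{1}{2k}\omega$ alone does not even close into a Virasoro bracket, and the correction $\tfrac{1}{2k^2}:I^2:$ is tuned precisely by (\ref{eq:8.3})--(\ref{eq:8.5}) so as to cancel the $I$-contribution in $[a_\lambda L^{\fg}]$ and simultaneously kill the anomaly in the self-bracket.
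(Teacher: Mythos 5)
Your proof is correct and follows essentially the same route as the paper: the paper likewise presents Proposition \ref{prop:8.1} as an immediate consequence of the two $\lambda$-bracket identities (\ref{eq:8.7}) and (\ref{eq:8.8}), themselves deduced from (\ref{eq:8.3})--(\ref{eq:8.5}). Your additional verification of the CFT-type condition via the PBW spanning set only makes explicit what the paper leaves implicit.
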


Finally, we have the following version of Theorem \ref{th:3.2} for
$\fg = gl(n|n)$.
\begin{theorem}
    \label{th:8.1}
    Let $L^{\fg}$ be defined by (\ref{eq:8.6}), let $L$ be defined by (\ref{eq:3.2}) with this $L^{\fg}$, and let $\Tilde{J}^{\{ f \}}$ be the element, defined in Theorem \ref{eq:3.1} for $h^{\vee} = 0$. Then the element
        \begin{displaymath}
    J^{\{ f \}} \ = \ \Tilde{J}^{\{ f \}}  - \frac{1}{2 k} : I^{2} : 
    \end{displaymath}
  is $d_{(0)}$-closed
         and $L  =  - \frac{1}{k} J^{\{ f \}}$ is an energy-momentum vector of $W^{k} (\fg , x , f)$ with central charge given by formula (\ref{eq:3.3}) with $h^{\vee} = 0$.
        \begin{proof}
          By the same proof as that of Theorem \ref{th:3.1}, the element
          $\Tilde{J}^{\{ f \}}$ is $d_{(0)}$-closed, and also, by (\ref{eq:2.6}), the element $I$ is $d_{(0)}$- closed, hence the same holds for $: I^{2} :$.
          Hence the element $J^{\{f\}}$ is $d_{(0)}$-closed.
          
          Let $\Tilde{L} \ = \frac{1}{2 k} \omega  +  \partial x + L^{ch} + L^{ne} $ (cf. (\ref{eq:3.2})).
          By the same proof as that of Theorem \ref{th:3.2}, we have: $\Tilde{L} + \frac{1}{k} \Tilde{J}^{\{ f \}}$ is $d_{(0)}$-exact.
          Since this element coincides with $L + \frac{1}{k} J^{\{ f \}}$,
          the theorem is proved.
    \end{proof}
\end{theorem}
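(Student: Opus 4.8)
The plan is to show that every formula and argument of Sections~\ref{sec:3}--\ref{sec:5} survives verbatim for $\fg=gl(n|n)$ once one sets $h^\vee=0$ and uses the modified Sugawara operator $L^\fg=\tfrac1{2k}\omega+\tfrac1{2k^2}:I^2:$ of (\ref{eq:8.6}) together with the modified element $J^{\{f\}}=\widetilde J^{\{f\}}-\tfrac1{2k}:I^2:$, the only genuinely new input being that the extra terms built from the identity matrix $I$ are $d_{(0)}$-closed and hence interfere with none of the closedness or exactness statements.

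First I would record two elementary facts. Since $I$ is central, $[u_j,I]=0$ for all $j$; and since $\str$ kills $\fg_{\neq0}$ (if $u\in\fg_j$ with $j\neq0$ then $u=j^{-1}[x,u]$, so $\str u=\tfrac1j\str[x,u]=0$), we get $(I|u_j)=\str u_j=0$ for $j\in S_{>0}$. Hence $d_{(0)}I=0$ by (\ref{eq:2.6}), and therefore $d_{(0)}(:I^2:)=0$ by the derivation property of $d_{(0)}$. Second, for $\fg=gl(n|n)$ the Killing form is $\kappa(a,b)=-2(a|I)(b|I)$, which is \emph{not} proportional to $(.|.)$, but which vanishes as soon as at least one of $a,b$ lies in $\fg_{\neq0}$.

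Next I would rerun the proof of Theorem~\ref{th:3.1} from Section~\ref{sec:4}, tracking every occurrence of $h^\vee$. Each one enters either as a literal coefficient in the formula for $J^{\{f\}}$ (the summands $-(k+h^\vee)\partial J^{(x)}$ and $\tfrac{k+h^\vee}{2}\sum_{j\in S_{1/2}}:\Phi^j\partial\Phi_j:$, whose $h^\vee$ is by definition replaced by $0$ in $\widetilde J^{\{f\}}$), or through the identities $\tfrac12\kappa(f,u_j)=h^\vee(f|u_j)$ for $u_j\in\fg_1$ and $\tfrac12\kappa([f,u_j],u_i)=h^\vee([f,u_j]|u_i)$ for $i,j\in S_{1/2}$, which are used, via (\ref{eq:4.3}) and (\ref{eq:2.8a}), in deriving (\ref{eq:4.34}) and (\ref{eq:4.35}). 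For $\fg=gl(n|n)$ these last two expressions equal $-(f|I)(u_j|I)=0$ and $-([f,u_j]|I)(u_i|I)=0$, since $f,[f,u_j],u_i,u_j$ all have nonzero $\ad x$-grade; this is precisely the value dictated by $h^\vee=0$. Thus the Section~\ref{sec:4} computation with $h^\vee=0$ (including Lemma~\ref{lem:4.5}, whose proof uses only (\ref{eq:4.34}), (\ref{eq:4.35}) and (\ref{eq:2.6})) gives $d_{(0)}\widetilde J^{\{f\}}=0$, and adding $-\tfrac1{2k}d_{(0)}(:I^2:)=0$ yields $d_{(0)}J^{\{f\}}=0$.

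For the identity $L=-\tfrac1kJ^{\{f\}}$ I would set $\widetilde L=\tfrac1{2k}\omega+\partial x+L^{\ch}+L^{\ne}$, so that $L=\widetilde L+\tfrac1{2k^2}:I^2:$ and $\tfrac1kJ^{\{f\}}=\tfrac1k\widetilde J^{\{f\}}-\tfrac1{2k^2}:I^2:$; the $:I^2:$ contributions cancel, so it suffices to prove $\widetilde L+\tfrac1k\widetilde J^{\{f\}}$ is $d_{(0)}$-exact. Here $k\widetilde L$ is precisely the right-hand side of (\ref{eq:5.15}) with $h^\vee=0$ (using $\sum_{j\in S}:u^ju_j:=\omega$), and a scan of Section~\ref{sec:5} shows that $h^\vee$ enters there only in passing from $(k+h^\vee)L$ to the auxiliary elements $P_0,P_1,P_2$, where it reappears purely algebraically as the gap between the coefficients $k+h^\vee$ and $k$, never through the value of $\kappa$; so Lemmas~\ref{lem:5.1}--\ref{lem:5.8} with $h^\vee=0$ give $\widetilde L+\tfrac1k\widetilde J^{\{f\}}\equiv P_2\equiv0 \mod \Im d_{(0)}$, whence $L=-\tfrac1kJ^{\{f\}}$ in $W^k(\fg,x,f)$. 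Finally $L$ is a Virasoro vector since $L^\fg$ is one of central charge $0$ (Proposition~\ref{prop:8.1}) and commutes with $L^{\ch}$ and $L^{\ne}$; running the central-charge computation of \cite{KRW}, Remark~2.2 with this $L^\fg$ --- equivalently, with $h^\vee$ formally set to $0$, which is consistent because $\sdim gl(n|n)=0$ --- produces formula (\ref{eq:3.3}) with $h^\vee$ replaced by $0$. The step I expect to demand the most care is exactly the verification that throughout Sections~\ref{sec:4} and~\ref{sec:5} the Killing form is only ever evaluated at pairs of elements at least one of which has nonzero $\ad x$-grade, so that the $gl(n|n)$ correction $-2(a|I)(b|I)$ never contributes and the blanket substitution $h^\vee=0$ is legitimate.
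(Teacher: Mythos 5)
Your proof is correct and follows essentially the same route as the paper's: the paper likewise reduces everything to the observations that $:I^2:$ is $d_{(0)}$-closed and that its contributions to $L$ and to $-\frac{1}{k}J^{\{f\}}$ cancel, and then simply invokes the proofs of Theorems \ref{th:3.1} and \ref{th:3.2} with $h^\vee=0$. Your explicit check that (\ref{eq:2.8a}) --- which fails for $gl(n|n)$ --- is only ever applied to pairs of elements at least one of which has nonzero $\ad x$-grade, where both sides vanish, is precisely the verification the paper leaves implicit behind the phrase ``by the same proof.''
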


\vspace{5mm}

\end{document}